\newtheorem{assumption}{Assumption}[section] 
\newtheorem{example}{Example}
\newtheorem{remark}[theorem]{Remark}
\def\cC{{\mathcal{C}}}
\def\cN{{\mathcal{N}}}
\def\cP{{\mathcal{P}}}
\def\cX{\mathcal{X}}
\def\cY{\mathcal{Y}}
\newcommand{\dom}{\operatorname{dom}}
\newcommand{\R}{\mathbb{R}}
\DeclareMathOperator*{\argmin}{arg\,min}
\newcommand{\beps}{\boldsymbol{\varepsilon}}
\newcommand{\bid}{\boldsymbol{\mathrm{Id}}}
\newcommand{\bA}{\mathbf{A}}
\newcommand{\ba}{\mathbf{a}}
\newcommand{\bx}{\mathbf{x}}
\newcommand{\bu}{\mathbf{u}}
\newcommand{\by}{\mathbf{y}}
\newcommand{\bz}{\mathbf{z}}
\crefname{hypothesis}{Hypothesis}{Hypotheses}
\title{Optimization landscape of $\ell_0$-Bregman relaxations{\renewcommand{\thefootnote}{}\thanks{\today}}}
\author{Jonathan Chirinos-Rodríguez\textsuperscript{*,1} \and C\'edric F\'evotte\textsuperscript{1} \and Emmanuel Soubies\textsuperscript{1}}
\begin{document}
\sloppy

\renewcommand{\thefootnote}{\fnsymbol{footnote}}
\footnotetext[1]{Corresponding author.} 

\renewcommand{\thefootnote}{\arabic{footnote}}
\footnotetext[1]{IRIT, CNRS, Toulouse INP, Université de Toulouse, France (\email{jonathan-eduardo.chirinos-rodriguez@irit.fr}, \email{cedric.fevotte@irit.fr}, \email{emmanuel.soubies@irit.fr})}

\maketitle

\begin{abstract}
In this paper, we study (noisy) linear systems, and their $\ell_0$-regularized optimization problems, coupled with general data fidelity terms. Recent approaches for solving this class of problems have proposed to consider non-convex exact continuous relaxations that preserve global minimizers while reducing the number of local minimizers. Within this framework, we consider the class of $\ell_0$-Bregman relaxations, and establish sufficient conditions under which a critical point is isolated in terms of sparsity, in the sense that any other critical point has a strictly larger cardinality. In this way, we ensure a form of uniqueness in the solution structure. Furthermore, we analyze the exact recovery properties of such exact relaxations. To that end, we derive conditions under which the oracle solution (i.e., the one sharing the same support as the ground-truth) is the unique global minimizer of the relaxed problem, and is isolated in terms of sparsity. Our analysis is primarily built upon a novel property we introduce, termed the Bregman Restricted Strong Convexity. Finally, we specialize our general results to both sparse Gaussian (least-squares) and Poisson ((generalized) Kullback–Leibler divergence) regression problems. In particular, we show that our general analysis sharpens existing bounds for the LS setting, while providing an entirely new result for the KL case.
\end{abstract}

\begin{keywords}
 Sparse inverse problems, $\ell_0$-Bregman relaxations, non-convex optimization, optimization landscape,  exact recovery.
\end{keywords}

\begin{AMS}
15A29, 
49J99, 
90C27, 
90C26, 
65K10. 
\end{AMS}

\section{Introduction}
Let $\bA\in \R^{M\times N}$, $M, N\in\mathbb{N}$, be a linear operator (typically wide, with $M \ll N$) and let $\bx^*$ be a sparse vector with $k^* \ll N$ nonzero components. We consider the inverse problem of recovering $\bx^*$ from the measurements $\by\in\mathcal{Y}^M$ given by
\begin{equation}\label{eq:model}
    \by=\mathcal{R}(\by^*), \text{ with } \by^*=\iota(\bA\bx^*),
\end{equation}
where the --possibly nonlinear-- map $\iota$ represents a deterministic transformation (i.e., the sigmoid function in logistic regression), $\mathcal{R}$ stands for a random operation such as additive (Gaussian) or signal-dependant (Poisson) noise, or a probabilistic ``labeling'' operation as in logistic regression. Finally, the set $\mathcal{Y}\subset \R$ depends on the problem we are dealing with. For instance, $\mathcal{Y}=\R$ in Gaussian regression, $\mathcal{Y}=\{0, 1\}$ in logistic regression, and $\mathcal{Y}=\mathbb{Z}_{\geq 0}$ in Poisson regression. In order to tackle the above problem, it is common to consider the following $\ell_0$-regularized problem
\begin{equation}\label{eq:J0intro}
\min_{\bx\in\cC^N} J_0(\mathbf{x}); \quad J_0(\mathbf{x}):=G_{\by}(\mathbf{A}\mathbf{x}) + \lambda_0\|\mathbf{x}\|_0,
\end{equation}
where $G_{\by}$ corresponds to a general convex data fidelity term (or loss function), and ensures that the solution remains close to the available measurements. Famous examples are the least-squares (or quadratic) loss (associated to Gaussian regression problems), the generalized Kullback--Leibler divergence (common in Poisson regression problems), or the logistic loss (a natural choice for logistic regression problems). The notation $\|\cdot\|_0$ stands for the so-called $\ell_0$ pseudo-norm (or $\ell_0$-norm for simplicity), and measures the amount of nonzero components of a given vector. The scalar $\lambda_0>0$, known as regularization parameter, aims to balance the relative importance between fitting the data and imposing sparsity of solutions, induced by the $\ell_0$-norm. Finally,  we consider in this work problems for which the constraint set $\cC$ is either $\R$ or $\R_{\geq 0}$, and this depends on the fidelity term that we choose. 

Due to the discontinuous and non-convex nature of the $\ell_0$-norm, these problems are inherently NP-hard \cite{Natarajan1995,Nguyen2019}. As a result, a common strategy is to replace the $\ell_0$-norm with continuous relaxations, as they yield more tractable optimization problems. Among these, the $\ell_1$-norm is particularly popular, as its convexity permits the use of fast and efficient optimization algorithms \cite{Tibshi1996}. Beyond the $\ell_1$ convex relaxation, a variety of non-convex alternatives have been proposed. Notable examples include the capped-$\ell_1$ penalty \cite{Zhang2008}, the $\ell_p$-norms, $0<p<1$ \cite{Foucart2009}, and the minimax concave penalty (MCP) \cite{Zhang2010}. More recently, non-convex \emph{exact} continuous relaxations have been proposed~\cite{bian2020smoothing,Carlsson2019,Essafri2024,Soubies2015,soubies2017unified}. These relaxations enjoy the following theoretical property: it can be shown that the relaxed problem shares the same global minimizers as the original $\ell_0$-regularized formulation \cref{eq:J0intro}, while admitting fewer local minimizers. As a result, these formulations offer a more favorable optimization landscape, potentially improving the effectiveness of numerical algorithms.

In particular, as we are interested in dealing with fidelity terms that may be non-quadratic, we will focus on the class of $\ell_0$-Bregman relaxations (B-rex), which has been recently introduced in \cite{Essafri2024}. The corresponding exact relaxation takes the form
\begin{equation}\label{eq:JPsi}
    \min_{\bx \in \cC^N} J_\Psi(\mathbf{x}); \quad J_\Psi(\mathbf{x}):= G_{\mathbf{y}}(\mathbf{A}\mathbf{x})+B_\Psi(\mathbf{x}),
\end{equation}
where, given $\Psi:\R^N\to\R$, $B_{\Psi}$ is known as the B-rex functional, and is associated to the Bregman divergence of the function $\Psi$. We will dedicate~\Cref{sec:brex} to further delve into the details of the above functional: from its definition to its exact relaxation property. In short, the strength of this approach is, on top of the exact relaxation result, its adaptability through the choice of $\Psi$: : a proper choice of the Bregman function $\Psi$ will allow to incorporate the geometry of the data fidelity term $G_\by$ into the regularization imposed by $B_\Psi$, reducing the number of local minimizers, see e.g.~\cite{EssafriKL}.

The case of a quadratic fidelity term $G_{\mathbf{y}}$ and $\Psi=(1/2)\|\cdot\|_2^2$ has been extensively studied in the literature, and the exact relaxation properties of the associated relaxed problem have been analyzed in \cite{Soubies2015,soubies2017unified,Carlsson2019}. Moreover, a deeper analysis of the relaxation's optimization landscape  was conducted in \cite{Carlsson2020}. This work provides conditions under which critical points are isolated in terms of sparsity, meaning that any other critical point admits a much larger cardinality. The authors further demonstrate that, under suitable assumptions, it is possible to determine an interval for the regularization parameter $\lambda_0$ such that the oracle solution is the unique global minimizer of the relaxation and, additionally, isolated in terms of sparsity. Subsequent works analyzed the optimization landscape of exact relaxations for rank-regularized least-squares \cite{carlsson2022unbiased} and for $\ell_0$-$\ell_1$ regularized least-squares \cite{Carlsson22}. Finally, the results of \cite{Carlsson2020} have been specialized to the case where $\bA$ is a Fourier measurement matrix in \cite{Qiao2022}.\\

\textbf{Contributions and outline.} In this work, we analyze the optimization landscape of $J_\Psi$ in \cref{eq:JPsi} for general --not necessarily quadratic-- data fidelity terms. First, we introduce in~\Cref{sec:brsc} the Bregman Restricted Strong Convexity (BRSC) property, that will be central in our manuscript and, as we will see, naturally generalizes well-known quantities such as the Lower Restricted Isometry Property (LRIP)~\cite{Blanchard2011} (which is basically a lower estimate of the so-called RIP~\cite{Candes2006}), or the Restricted Strong Convexity (RSC) property \cite{Shen2016}. In particular, we show that, while the KL data fidelity term fails to satisfy the RSC property, it satisfies the proposed BRSC property for a suitable choice of Bregman generating function. Next, in~\Cref{sec:sec4} we provide sufficient conditions for a critical point of $J_\Psi$ to be isolated in terms of sparsity.  Additionally, recalling that $J_\Psi$ is an exact continuous relaxation of $J_0$ \cite{Essafri2024}, our setting allows us to show that such an isolated critical point turns out to be the unique global minimizer of $J_\Psi$ (and so of $J_0$) for suitable~$\lambda_0$. In~\Cref{sec:oracsol}, we study exact support recovery results. Specifically, we derive sufficient conditions, that can be stated as
$$
\lambda_0\in(\underline\Lambda, \bar\Lambda),
$$
under which the oracle solution $\bx^{\mathrm{or}}$ is both isolated in terms of sparsity and the unique global minimizer of $J_\Psi$ (hence the global minimizer of $J_0$ too). In~\Cref{sec:applications}, we specialize these results to two important instances of Model~\cref{eq:model}, namely Gaussian and Poisson regression, defined respectively in~\Cref{ex:LS} and~\Cref{ex:KL} below. For these two examples we have the following outcomes. 
\begin{enumerate}
\item In the Gaussian regression (least-squares) setting, our analysis allows to obtain improved bounds with respect to existing literature \cite{Carlsson2020}, in the sense that we obtain a larger interval $(\underline\Lambda, \bar\Lambda)$ with respect to \cite{Carlsson2020}. 

\item In the Poisson regression ((generalized) Kullback-Leibler) setting, we provide a completely novel result by constructing a valid interval $(\underline\Lambda, \bar\Lambda)$ of regularization parameters~$\lambda_0$ in which our result holds. Naturally, we see that the same interpretation as in Gaussian regression problems can be obtained: the derived interval enlarges whenever the level of noise is small enough with respect to the magnitude of the nonzero components of the true solution. In this sense, we show that the asymptotic behavior that one would expect holds theoretically too.\\
\end{enumerate}

\textbf{Examples.} We now describe the two examples mentioned above, which consist in considering Model~\cref{eq:model} and varying the deterministic transformation $\iota$ and the random noise described by $\mathcal{R}$. 

\begin{example}[Gaussian regression]\label{ex:LS} Let $\iota$ be the identity mapping, $\iota=\mathrm{Id}$, and let $\mathcal{R}$ denote an additive Gaussian noise. With this, we rewrite Model~\cref{eq:model} as
$$
\by=\bA\bx^*+\beps,
$$
where $\beps\sim\cN(\mathbf{0},\sigma^2\bid)$, $\sigma>0$ being its standard deviation, and $\by\in\cY^M=\R^M$. The inverse problem corresponds to a sparse Gaussian regression problem, where the negative log-likelihood reduces to the data fidelity term being the least-squares loss, which is given for all $\mathbf{w}\in\R^M$ by
\begin{equation}\label{eq:fidLS}
G^{\mathrm{LS}}_\by(\mathbf{w}):=\frac12\|\mathbf{w}-\by\|_2^2.
\end{equation}
Finally, and for further purposes, we note that $G^{\mathrm{LS}}_\by$ has Lipschitz gradient with Lipschitz constant $L=1$.
\end{example}

\begin{example}[Poisson regression]\label{ex:KL} Let $\iota=\mathrm{Id} + \mathbf{b}$, $\mathbf{b} \in \R_{>0}^M$ modeling some known background~\cite{fessler1998paraboloidal}, and let $\mathcal{R}=\cP$ denote a Poisson noise. With this, we rewrite Model~\cref{eq:model} as
$$
\by \sim \mathcal{P}(\by^*),\; \text{with } \;\by^*=\bA\bx^* +\mathbf{b} .
$$
where $\bx^*\in \R_{\geq 0}^N$, $\bA \in \R_{\geq 0}^{M \times N}$, with no column entirely zero, and $\by\in\cY^M=\mathbb{Z}^M_{\geq 0}$  Here, we aim to solve a sparse Poisson regression problem where, as above, the negative log-likelihood reduces to the fidelity term given, for all $\mathbf{w}\in\R_{\geq 0}^M$, by
\begin{equation}\label{eq:fidKL}
G^{\mathrm{KL}}_\by(\mathbf{w}):=\sum_{j=1}^M g_{y_j}^{\mathrm{KL}}(w_j+b_j),
\end{equation}
with $g_y^{\mathrm{KL}}(z):= z+y\log(y/z)-y$  the (generalized) Kullback--Leibler divergence. As above, notice that $G^{\mathrm{KL}}_\by$ has Lipschitz gradient with constant $L = \max_j{y_j}/{b_j^2}$, see e.g.~\cite[Lemma 1]{Harmany2012}.
\end{example}

\textbf{Notation, definitions and assumptions.} In the following, $\R$ and $\R_{\geq 0}$ will denote the spaces of real and non-negative real numbers. Given a vector $\bx$, $\sigma(\bx)$ denotes its support; that is, $\sigma(\bx):=\{i \colon x_i\neq 0\}$. Additionally, given a set of indices $\omega$, we denote by $\# \omega$ its cardinality and, given $\bx$, $\bx_\omega=(x_{\omega[1]},\ldots,x_{\omega[\# \omega]})$ denotes the restriction of $\bx$ to the entries indexed by $\omega$. In this context, given $\bA$ a matrix, $\bA_\omega$ defines the matrix restricted to the columns indexed by $\omega$: $\bA_\omega:=(\ba_{\omega[1]},\ldots,\ba_{\omega[\# \omega]})$, where $\ba_{\omega[j]}$, $j=1,\ldots,\# \omega$, denotes the $\omega[j]^{\mathrm{th}}$ column of $\bA$. We denote by $\bid$ the identity matrix, whose dimension should be clear from the context. Finally, we use the Loewner order for matrices: we say that $\bA\succcurlyeq 0$ (resp. $\bA\succ 0$) if $\bA$ is positive semidefinite (resp. positive definite).  We further recall some useful definitions.
\begin{definition}[Zero padding~\cite{Nikolova2013}] The zero padding operator  $Z_\omega\colon\R^{\#\omega}\to\R^N$ is defined, for any $\bu\in\R^{\#\omega}$, as
$$
(Z_\omega(\bu))_i:=
\begin{cases}
u_j, & \text{for the unique } j\in \{1,\ldots,\#\omega\} \text{ such that } i=\omega[j],\\
0, & \text{if } i\notin \omega.
\end{cases}
$$
\end{definition}
In short, $Z_\omega$ embeds a vector of smaller dimension into a larger one by padding it with zeros outside the index set $\omega$. For example, in $\R^6$, given $\bu=(1,2,3)\in\R^3$ and $\omega=\{2,4,5\}$, we have that $Z_{\omega}(\bu)=(0,1,0,2,3,0)\in\R^6$.
\begin{definition}[Restricted function] Given $F:\R^N\to\R\cup\{+\infty\}$, we define for any $\omega\subseteq\{1,\ldots, N\}$ and any $\mathbf{v}\in\R^{N-\#\omega}$ the restricted function of $F$, $F_{\omega, \mathbf{v}}$, as
$$
F_{\omega,\mathbf{v}}\colon\R^{\#\omega}\to \R\cup\{+\infty\}; \quad  F_{\omega,\mathbf{v}}(\bu):=F(Z_\omega(\bu)+Z_{\omega^c}(\mathbf{v})).
$$
\end{definition}
The above definition corresponds to the restriction of a function $F$ to the variables indexed by $\omega$, the other being fixed to the values in $\mathbf{v}$. Additionally, we simply write $F_\omega$ instead of $F_{\omega,\boldsymbol{0}}$ whenever $\mathbf{v}=\boldsymbol{0}$. Now, we recall the definition of the (symmetric) Bregman divergence.
\begin{definition}[(Symmetric) Bregman divergence] Given a proper, convex and differentiable function $F:\R^N\to\R\cup\{+\infty\}$, we define the Bregman divergence associated to $F$, $D_F$, to be
$$
D_F(\bx, \bx'):= F(\bx)-F(\bx')-\langle\nabla F(\bx'), \bx-\bx'\rangle,
$$
for any $\bx\in\dom F$, $\bx'\in\mathrm{int}(\dom F)$, and $+\infty$ elsewhere, and where $\nabla F(\bx)$ denotes the gradient of $F$ at $\bx$. Moreover, the symmetric Bregman divergence of $F$, denoted by $D^{\ \mathrm{symm}}_F$, is defined as
$$
D_{F}^{\ \mathrm{symm}}(\bx, \bx'):=D_{F}(\bx, \bx')+D_{F}(\bx', \bx)=\langle\nabla F(\bx')-\nabla F(\bx), \bx'-\bx\rangle,
$$
for any $\bx$, $\bx'\in\mathrm{int}(\dom F)$, and $+\infty$ elsewhere.
\end{definition}
Notice that, in the above definitions, $\bx'$ (and $\bx$ too in the symmetric case) must belong to $\mathrm{int}(\dom F)$ as the gradient of $F$ may not necessarily be defined in the border of the domain. 

Next, we introduce a general set of assumptions for the --not necessarily quadratic-- data fidelity terms $G_\by$ that are considered in this work. 

\begin{assumption}\label{ass:FID} The data fidelity term $G_{\by}$ is assumed to be coordinate-wise separable; i.e., given $\by\in\cY^M$,
$$
G_{\by}\colon (\dom g_{y_1}) \times \cdots \times (\dom g_{y_M})\to\R_{\geq 0}; \quad G_{\by}(\mathbf{w}):=\sum_{j=1}^M g_{y_j}(w_j).
$$
where $g_y\colon \R\to\R_{\geq 0}$ is a nonnegative function such that, for any $y\in\mathcal{Y}$, 
\begin{enumerate}[label=(\roman*),leftmargin=*]
\item\label{ass:fid1} $g_y$ is strictly convex, proper, twice differentiable on $\mathrm{int}(\dom g_y)$, and coercive,
\item\label{ass:fid2} $g_y$ has $L$-Lipschitz gradient for some $L>0$.
\end{enumerate}
Moreover, when $\dom G_{\by} \subset \R^M$ we assume that
\begin{enumerate}[label=(\roman*),leftmargin=*]\addtocounter{enumi}{2}
\item\label{ass:fid3} $\mathrm{cl}(\mathrm{Im}(\bA |_{\cC^N})) \subseteq \mathrm{int}(\dom G_{\by})$, where $\mathrm{Im}(\bA |_{\cC^N}) := \{\bA \bx : \bx \in \cC^N\}$,
\item\label{ass:fid4} $\sup_{\mathbf{w}\in \mathrm{Im}(\bA |_{\cC^N}) } \|\nabla G_\by(\mathbf{w}) \| < \infty$.
\end{enumerate} 
\end{assumption}
The first assumption is the one employed in \cite{Essafri2024} to guarantee that~\cref{eq:J0intro} admits solutions and ensure that~\cref{eq:JPsi} is an exact continuous relaxation of~\cref{eq:J0intro}. The second assumption is common in the optimization literature to ensure convergence of several iterative algorithms. Importantly, both assumptions are naturally satisfied by most loss functions of interest and, in particular, by both of the fidelity terms in~\cref{ex:LS} and~\cref{ex:KL}. Finally, note that both the third and the fourth assumptions are satisfied for our KL example as $\mathrm{cl}(\mathrm{Im}(\bA |_{\cC^N})) \subseteq \R_{\geq 0}^M \subset (-\min_{j=1,\ldots, M}b_j,\infty)^M \subseteq \mathrm{int}(\dom G^\mathrm{KL}_{\by})$ and $\lim_{w \to \infty} (g_{y_j}^\mathrm{KL})'(w) = 1$ for all $j=1,\ldots,M$.

We now dedicate the following section to present the central tool of our work, which we call the Bregman Restricted Strong Convexity (BRSC) property and, as we will see, extends existing concepts in the literature.

\section{The Bregman Restricted Strong Convexity Property}\label{sec:brsc}
 
We recall that the objective of this work is to show that the functional $J_\Psi$ defined in~\cref{eq:JPsi}, although non-convex, exhibits a favorable optimization landscape. As we already mentioned, the quadratic setting has been studied in depth in \cite{Carlsson2020}. This work is strongly based on a very specific assumption on the matrix $\bA$: the so-called LRIP condition, introduced in \cite{Blanchard2011}. In the following, we aim to extend such definition to our Bregman setting. 

\subsection{From LRIP/RSC to BRSC}
We start by giving a proper definition of the LRIP condition: we say that $\bA$ satisfies the LRIP property if there exists a constant $\delta_K^-<1$, for some $K \in \{1,\ldots, N\}$, such that
$$
1-\delta_K^-=\inf\left\{\frac{\|\bA\bx\|_2^2}{\|\bx\|_2^2} \mid \bx\neq\boldsymbol{0} \text{ and } \|\bx\|_0\leq K\right\}
$$
or, equivalently, that
\begin{equation}\label{eq:LRIP}
\tag{LRIP}
    \bA_\omega^T\bA_\omega\succcurlyeq (1-\delta_K^-) \bid,\quad \text{ for all } \omega \text{ with } \# \omega\leq K.
\end{equation}
For instance, when $\bA$ is a Gaussian matrix, a common choice in the compressed sensing literature, then~\cref{eq:LRIP} holds with ``overwhelming'' probability~\cite{CandesTao2005}. Since this condition is closely associated with quadratic fidelity terms, it is natural to seek for a corresponding definition that extends existing analyses to general non-quadratic fidelity terms. To this aim, a first approach would be to consider the so-called Restricted Strong Convexity (see~\cite{Shen2016} and references therein): given a proper, convex and differentiable function $F\colon\R^N\to\R\cup\{+\infty\}$, we say that it satisfies the RSC property if there exists a constant $\mu_K>0$ such that, for any $\bx\in\dom F$ and $\bx'\in\mathrm{int}(\dom F)$ with $\|\bx-\bx'\|_0\leq K$, we have
$$
D_F(\bx,\bx')\geq\frac{\mu_K}{2}\|\bx-\bx'\|_2^2,
$$
which is equivalent to having, for any $\bx, \bx'\in\mathrm{int}(\dom F)$ with $\|\bx-\bx'\|_0\leq K$:
\begin{equation}\label{eq:RSC}
\tag{RSC}
D_F^{\ \mathrm{symm}}(\bx,\bx') \geq \mu_K\|\bx-\bx'\|_2^2.
\end{equation}
Notice that both of the above expressions are simply equivalent definitions of strong convexity of $F$ with the additional assumption that the distinct points must satisfy $\|\bx-\bx'\|_0\leq K$. Additionally, the above condition trivially recovers the LRIP property on the matrix $\bA$, whenever $F=G_\by^{\mathrm{LS}}(\bA\cdot)$, by observing that  can be equivalently defined as follows: we say that $F$ satisfies the RSC property whenever the quantity
$$
   \mu_K:=\inf\left\{\frac{D_{F}^{\ \mathrm{symm}}(\bx', \bx)}{\|\bx'- \bx\|_2^2} \mid  \bx\neq\bx'  \text{ and }\|\bx'-\bx\|_0\leq K \right\} 
$$
is positive. The relation between both constants $\mu_K$ and $\delta_K^-$ when $F=G_\by^{\mathrm{LS}}(\bA\cdot)$ is $\mu_K=1-\delta_K^-$. In fact, it is possible to relate~\cref{eq:RSC} and~\cref{eq:LRIP} beyond the least-squares data term.
\begin{proposition}\label{propo:BRSC_with_psiL2} 
Let $G\colon\R^N\to\R\cup\{+\infty\}$ be a proper, differentiable, and strongly convex function with parameter $\nu>0$. Assume in addition that $\bA$ satisfies~\cref{eq:LRIP} with constant $\delta_K^-<1$. Then, $F = G(\bA\cdot)$ satisfies~\cref{eq:RSC} with constant $\mu_K:=\nu(1-\delta_K^-)$.
\end{proposition}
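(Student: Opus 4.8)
The plan is to reduce the statement to a one-line chain-rule identity for Bregman divergences under composition with a linear map, followed by an application of \cref{eq:LRIP} to the (necessarily $K$-sparse) difference vector. First I would record that, since $F = G(\bA\cdot)$, one has $\nabla F(\bx) = \bA^T \nabla G(\bA\bx)$ on $\mathrm{int}(\dom F) = \{\bx : \bA\bx \in \mathrm{int}(\dom G)\}$, and that the symmetric Bregman divergence transports covariantly through $\bA$:
\begin{equation*}
D_F^{\ \mathrm{symm}}(\bx, \bx') = \big\langle \bA^T\big(\nabla G(\bA\bx') - \nabla G(\bA\bx)\big),\, \bx' - \bx\big\rangle = \big\langle \nabla G(\bA\bx') - \nabla G(\bA\bx),\, \bA\bx' - \bA\bx\big\rangle = D_G^{\ \mathrm{symm}}(\bA\bx,\, \bA\bx').
\end{equation*}

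Next I would use that $\nu$-strong convexity of $G$ is equivalent to the inequality $D_G^{\ \mathrm{symm}}(\bu,\bu') \geq \nu\|\bu-\bu'\|_2^2$ for all $\bu,\bu' \in \mathrm{int}(\dom G)$ (i.e.\ monotonicity of $\nabla G - \nu\,\mathrm{Id}$). Applying this with $\bu = \bA\bx$ and $\bu' = \bA\bx'$ gives $D_F^{\ \mathrm{symm}}(\bx,\bx') \geq \nu\,\|\bA(\bx'-\bx)\|_2^2$. Then I would bring in sparsity: writing $\bz := \bx'-\bx$ with $\|\bz\|_0 \leq K$ and $\omega := \sigma(\bz)$ (so $\#\omega \leq K$ and $\bA\bz = \bA_\omega \bz_\omega$), the matrix form of \cref{eq:LRIP} yields $\|\bA\bz\|_2^2 = \bz_\omega^T \bA_\omega^T\bA_\omega \bz_\omega \geq (1-\delta_K^-)\|\bz_\omega\|_2^2 = (1-\delta_K^-)\|\bz\|_2^2$. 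Chaining the three estimates gives $D_F^{\ \mathrm{symm}}(\bx,\bx') \geq \nu(1-\delta_K^-)\|\bx'-\bx\|_2^2$ for every $\bx,\bx' \in \mathrm{int}(\dom F)$ with $\|\bx-\bx'\|_0 \leq K$, which is precisely \cref{eq:RSC} with $\mu_K = \nu(1-\delta_K^-)$; positivity of this constant is immediate from $\nu > 0$ and $\delta_K^- < 1$.

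There is no real obstacle here — the only thing needing mild care is the domain bookkeeping: the gradient identity and the strong-convexity inequality must be invoked only at points where $\bA\bx,\bA\bx' \in \mathrm{int}(\dom G)$, but this is exactly the set $\mathrm{int}(\dom F)$ on which $D_F^{\ \mathrm{symm}}$ is finite and over which \cref{eq:RSC} quantifies, so the restriction is automatic. The rest is the elementary computation above.
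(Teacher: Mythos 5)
Your proof is correct and takes essentially the same approach as the paper: compute $D_F^{\ \mathrm{symm}}$ via the chain rule, invoke $\nu$-strong convexity of $G$ in the form of gradient monotonicity, and finish with \cref{eq:LRIP} applied to the $K$-sparse difference. The paper phrases this in terms of the restricted map $G(\bA_\omega\cdot)$ for a fixed support $\omega$ rather than your more direct use of $\bz = \bx'-\bx$ and $\bA\bz = \bA_\omega\bz_\omega$, but the two presentations are computationally identical.
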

We point out that the above result is natural, and has already been proved in the full row rank case in \cite[Theorem 1]{Zhang2014} (notice that, in our setting, requiring \cref{eq:LRIP} corresponds to $\bA_\omega$, for $\# \omega \leq K$, being full column rank). For clarity, we dedicate~\Cref{proof:propoBRSC_Psil2} to prove the result. 

However,~\cref{eq:RSC} turns to be irrelevant for data fidelity terms $F = G_\by (\bA \cdot)$ where $G_\by$ is not strongly convex. This is the case, for instance, for the (generalized) Kullback--Leibler divergence $G_\by^{\mathrm{KL}}$ from~\cref{ex:KL}. To see this, as it is separable, it suffices to consider the one dimensional case: given $y\in\mathbb{Z}_{\geq 0}$, we have that $(g_y^{\mathrm{KL}})''(z)=y/(z+b)^2$ tends to zero as $z\to\infty$, that is $\nu = 0$ and thus $\mu_K =0$ for any $K$ in~\Cref{propo:BRSC_with_psiL2}. Motivated by this observation, we introduce in this work what we call the \emph{Bregman Restricted Strong Convexity} (BRSC) property which, we anticipate, allows to obtain positive results for $G_\by^{{\mathrm{KL}}}(\bA\cdot)$ (cf.~\cref{propo:positiveKL}).

\begin{definition}\label{def:brsc} Let $F, \Phi\colon\R^N\to\R \cup\{+\infty\}$ be proper, convex and differentiable functions over the interior of their domains. We say that $F$ satisfies the \emph{Bregman Restricted Strong Convexity (BRSC)} property with respect to $\Phi$, $\mathcal{X} \subseteq \mathrm{int}(\dom F) \cap\mathrm{int}(\dom \Phi)$, and $K \leq N$, if there exists a constant $C_K>0$ such that, for any $\bx \in \mathcal{X}$, $\bx'\in \mathrm{int}(\dom F)\cap\mathrm{int}(\dom \Phi)$ with $\bx'\neq\bx$ and $\|\bx-\bx'\|_0\leq K$, we have
\begin{equation}\label{eq:brsc}
    D_{F}^{\ \mathrm{symm}}(\bx, \bx')\geq C_K D_{\Phi}^{\ \mathrm{symm}}(\bx, \bx').
    \tag{BRSC}
\end{equation}
\end{definition}
\begin{remark}
Note that, although we use the notation $C_K$, this constant also depends on $\cX$, $F$, and $\Phi$. However, for a given instance of Problem~\cref{eq:J0intro}, these elements are fixed, whereas we may be interested in analyzing different values of~$K$. 
\end{remark}

The above definition will serve as the main technical tool in this work, and can be seen as a restricted version of the $\Phi$-strong convexity property in \cite{Bartlett2007} or the relative strong convexity in \cite{Lu2018}. 

We now provide some additional remarks to clarify its implications. First, observe that it generalizes existing notions; for instance, the choices $\Phi = (1/2) \|\cdot\|_2^2$ and $\mathcal{X} = \mathrm{int}(\dom F)$ recover~\cref{eq:RSC} with $\mu_K = C_K$ and, consequently~\cref{eq:LRIP} as well (with $1-\delta_K^-=C_K$), whenever $F=G_\by^{\mathrm{LS}}(\bA\cdot)$. In the following section, we present some properties of our BRSC, that will be useful throughout the manuscript.

\subsection{Properties of BRSC}
To begin with, we show in the following proposition an interesting monotonicity property of our BRSC, which also holds for both LRIP and RSC. Roughly, we prove that the smaller the set $\cX$ and/or $K$ is, the larger the BRSC constant.
\begin{proposition}[Monotonicity of BRSC]\label{prop:orderBRSCconst}
    If $F$ satisfies~\cref{eq:brsc} with respect to $\Phi$, $\cX$, and $K$, then it also satisfies~\cref{eq:brsc} with respect to $\Phi$, any $\tilde{\mathcal{X}} \subseteq \mathcal{X}$, and any $\tilde K \leq K$. Moreover, we have $C(\tilde K,\tilde{\mathcal{X}},F,\Phi) \geq C(K,\mathcal{X},F,\Phi)$.
\end{proposition}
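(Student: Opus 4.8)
The plan is to show directly from the definition that restricting the reference set and/or the sparsity budget can only make the infimum defining the BRSC constant larger. First I would make explicit the variational characterization of the constant, exactly paralleling the one given in the excerpt for RSC: for fixed $F$, $\Phi$, $\cX$, and $K$, write
\begin{equation*}
C(K,\cX,F,\Phi) = \inf\left\{\frac{D_F^{\ \mathrm{symm}}(\bx,\bx')}{D_\Phi^{\ \mathrm{symm}}(\bx,\bx')} \ \middle|\ \bx\in\cX,\ \bx'\in\mathrm{int}(\dom F)\cap\mathrm{int}(\dom\Phi),\ \bx'\neq\bx,\ \|\bx-\bx'\|_0\leq K\right\},
\end{equation*}
with the convention that the constant is the largest $C_K$ for which~\cref{eq:brsc} holds, i.e. precisely this infimum (one should note $D_\Phi^{\ \mathrm{symm}}(\bx,\bx')>0$ for $\bx\neq\bx'$ since $\Phi$ is strictly convex on the relevant region — or, if only convexity is assumed, restrict attention to pairs where the denominator is positive, the others imposing no constraint). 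The hypothesis that $F$ satisfies~\cref{eq:brsc} with respect to $\Phi$, $\cX$, $K$ means exactly that this infimum is a positive real number $C_K>0$.

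Next I would observe that shrinking the feasible set of the infimum cannot decrease it. Let $\tilde\cX\subseteq\cX$ and $\tilde K\leq K$. Since $\tilde\cX\subseteq\mathrm{int}(\dom F)\cap\mathrm{int}(\dom\Phi)$ still holds, and any pair $(\bx,\bx')$ with $\bx\in\tilde\cX$, $\bx'\neq\bx$, $\|\bx-\bx'\|_0\leq\tilde K$ also satisfies $\bx\in\cX$ and $\|\bx-\bx'\|_0\leq K$, the index set over which the second infimum is taken is a subset of the first. Therefore
\begin{equation*}
C(\tilde K,\tilde\cX,F,\Phi) \;=\; \inf_{\text{smaller set}} \;\geq\; \inf_{\text{larger set}} \;=\; C(K,\cX,F,\Phi) \;>\;0,
\end{equation*}
which simultaneously shows that $F$ satisfies~\cref{eq:brsc} with respect to $\Phi$, $\tilde\cX$, and $\tilde K$ (the constant is positive) and gives the claimed inequality between constants.

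There is essentially no hard step here; the only point requiring a little care is the bookkeeping around the denominator $D_\Phi^{\ \mathrm{symm}}(\bx,\bx')$, i.e. making sure the ratio formulation is legitimate (no division by zero) and that it genuinely coincides with "the largest admissible $C_K$" — this is where I would phrase things so that pairs with vanishing symmetric Bregman denominator are simply excluded, as they place no restriction on the validity of~\cref{eq:brsc}. Once the variational formula is in place, the result is the standard "infimum over a smaller set is at least as large" argument, and the same reasoning transparently specializes to LRIP (infimum of $\|\bA\bx\|_2^2/\|\bx\|_2^2$ over a smaller sparsity class) and to RSC, which is why the monotonicity is shared by all three notions.
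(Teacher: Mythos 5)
Your proof is correct and takes essentially the same approach as the paper: both arguments boil down to observing that the constraint set for $(\bx,\bx')$ shrinks when passing from $(\cX, K)$ to $(\tilde\cX, \tilde K)$, so the validity of the inequality is inherited and the optimal constant cannot decrease. The paper simply states the set inclusion and leaves the rest implicit, whereas you make the variational characterization of the constant explicit (and note the harmless division-by-zero issue), but this is exposition rather than a different route.
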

\begin{proof}
    The results follows from the fact that, for  any $\tilde{\mathcal{X}} \subseteq \mathcal{X}$ and any $\tilde{K} \leq K$ we have
    \begin{multline*}
        \left\lbrace \bx, \bx' | \bx \in \tilde{\mathcal{X}}, \bx' \in  \mathrm{int}(\dom F)\cap\mathrm{int}(\dom \Phi), \|\bx - \bx'\|_0 \leq \tilde K \right\rbrace \\ \subseteq  \left\lbrace \bx, \bx' | \bx \in {\mathcal{X}}, \bx' \in  \mathrm{int}(\dom F)\cap\mathrm{int}(\dom \Phi), \|\bx - \bx'\|_0 \leq  K \right\rbrace
    \end{multline*}
    which are the sets over which the~\cref{eq:brsc} inequality needs to hold true. 
\end{proof}

In the following result we consider restrictions over a subset of variables indexed by $\omega\subseteq\{1,\ldots, N\}$.
\begin{proposition}\label{prop:implications_BRSC}
    Let $F$ satisfy~\cref{eq:brsc} with respect to $\Phi$, a convex $\cX$, and $K$. Let $\omega \subseteq \{1,\ldots,N\}$ such that $\#\omega \leq K$ and define $\mathcal{X}_\omega = \{\bu \in \R^{\#\omega} | {Z}_\omega(\bu) \in \mathcal{X}\}$. Then, the restrictions $F_\omega$ and $\Phi_\omega$ are such that 
    \begin{enumerate}[label=(\roman*),leftmargin=*]
        \item\label{eq:brsc_implies_gcvx} $h(\bu) := F_\omega(\bu) - C_K\Phi_\omega(\bu)$ is convex over $\mathcal{X}_\omega$.
        \item\label{eq:brsc_implies_con_ineq} For all $ \bu,\bu' \in \mathcal{X}_\omega$, we have that 
        $$
        F_\omega(\bu) \geq F_\omega(\bu') + \left< \nabla F_\omega(\bu'),\bu - \bu'\right> + C_K D_{\Phi_\omega}(\bu,\bu').
        $$
    \end{enumerate}
\end{proposition}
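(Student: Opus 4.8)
The plan is to reduce everything to the (BRSC) inequality applied to points that live in the affine slice indexed by $\omega$, together with the standard equivalence between ``$F - C_K\Phi$ convex'', ``monotonicity of $\nabla(F-C_K\Phi)$'', and the symmetric-Bregman inequality. First I would set up notation: for $\bu,\bu'\in\mathcal{X}_\omega$, write $\bx = Z_\omega(\bu)$ and $\bx' = Z_\omega(\bu')$, so that $\bx,\bx'\in\mathcal{X}$ by definition of $\mathcal{X}_\omega$, and $\|\bx-\bx'\|_0 = \|Z_\omega(\bu-\bu')\|_0 \leq \#\omega \leq K$. Hence the pair $(\bx,\bx')$ is admissible in~\cref{eq:brsc} and we get $D_F^{\ \mathrm{symm}}(\bx,\bx') \geq C_K D_\Phi^{\ \mathrm{symm}}(\bx,\bx')$ (the case $\bu=\bu'$ being trivial). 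The key bookkeeping observation is that, because $Z_\omega$ is linear and $\Phi_\omega = \Phi\circ Z_\omega$, the chain rule gives $\nabla \Phi_\omega(\bu) = Z_\omega^{T}\nabla\Phi(Z_\omega(\bu))$, and therefore
\[
D_{\Phi_\omega}^{\ \mathrm{symm}}(\bu,\bu') = \langle \nabla\Phi_\omega(\bu') - \nabla\Phi_\omega(\bu),\, \bu'-\bu\rangle = \langle \nabla\Phi(\bx') - \nabla\Phi(\bx),\, Z_\omega(\bu'-\bu)\rangle = D_\Phi^{\ \mathrm{symm}}(\bx,\bx'),
\]
and identically $D_{F_\omega}^{\ \mathrm{symm}}(\bu,\bu') = D_F^{\ \mathrm{symm}}(\bx,\bx')$. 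Combining, for all $\bu,\bu'\in\mathcal{X}_\omega$,
\[
D_{F_\omega}^{\ \mathrm{symm}}(\bu,\bu') \geq C_K\, D_{\Phi_\omega}^{\ \mathrm{symm}}(\bu,\bu'),
\]
i.e. $\langle \nabla h(\bu') - \nabla h(\bu),\, \bu'-\bu\rangle \geq 0$ for $h = F_\omega - C_K\Phi_\omega$.

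For part~\ref{eq:brsc_implies_gcvx}, I would invoke the standard characterization: a differentiable function on a convex set is convex if and only if its gradient is monotone on that set. Since $\mathcal{X}_\omega$ is convex (it is the preimage of the convex set $\mathcal{X}$ under the linear map $Z_\omega$), and we have just shown $\nabla h$ is monotone on $\mathcal{X}_\omega$, convexity of $h$ on $\mathcal{X}_\omega$ follows. One should note here that $F_\omega$ and $\Phi_\omega$ are differentiable on the relevant set: differentiability of $F$ (resp. $\Phi$) on $\mathrm{int}(\dom F)$ (resp. $\mathrm{int}(\dom\Phi)$) transfers along $Z_\omega$, and $\mathcal{X}\subseteq \mathrm{int}(\dom F)\cap\mathrm{int}(\dom\Phi)$, so all gradients appearing above are well-defined at points of $\mathcal{X}_\omega$.

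For part~\ref{eq:brsc_implies_con_ineq}, convexity of $h$ on the convex set $\mathcal{X}_\omega$ gives the gradient inequality $h(\bu) \geq h(\bu') + \langle\nabla h(\bu'),\bu-\bu'\rangle$, i.e.
\[
F_\omega(\bu) - C_K\Phi_\omega(\bu) \geq F_\omega(\bu') - C_K\Phi_\omega(\bu') + \langle \nabla F_\omega(\bu') - C_K\nabla\Phi_\omega(\bu'),\, \bu-\bu'\rangle.
\]
Rearranging and recognizing $D_{\Phi_\omega}(\bu,\bu') = \Phi_\omega(\bu) - \Phi_\omega(\bu') - \langle\nabla\Phi_\omega(\bu'),\bu-\bu'\rangle$ yields exactly the claimed inequality
\[
F_\omega(\bu) \geq F_\omega(\bu') + \langle\nabla F_\omega(\bu'),\bu-\bu'\rangle + C_K D_{\Phi_\omega}(\bu,\bu').
\]
The main (and only mildly delicate) obstacle is the verification that the restriction/zero-padding bookkeeping is consistent — in particular that $\|Z_\omega(\bu-\bu')\|_0\le\#\omega\le K$ so that the pair is legitimately in the scope of~\cref{eq:brsc}, and that $\nabla\Phi_\omega = Z_\omega^T\nabla\Phi\circ Z_\omega$ so that $D^{\ \mathrm{symm}}$ is preserved under restriction; everything else is the textbook convexity/monotonicity equivalence applied on the convex set $\mathcal{X}_\omega$.
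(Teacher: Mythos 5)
Your proof is correct and follows essentially the same route as the paper's: show $\nabla h$ is monotone on the convex set $\mathcal{X}_\omega$ via the restricted BRSC inequality, deduce convexity, and then apply the gradient inequality to get part (ii). The paper's version is terser — it states $D_h^{\ \mathrm{symm}}(\bu,\bu')\geq 0$ directly "from the definition of the symmetric Bregman divergence and BRSC" without spelling out the zero-padding bookkeeping — whereas you make explicit the useful identities $\nabla\Phi_\omega = Z_\omega^T(\nabla\Phi\circ Z_\omega)$, $D_{\Phi_\omega}^{\ \mathrm{symm}}(\bu,\bu') = D_\Phi^{\ \mathrm{symm}}(Z_\omega(\bu),Z_\omega(\bu'))$, and the check that $\|Z_\omega(\bu-\bu')\|_0\le\#\omega\le K$ puts the pair in scope of (BRSC); these are exactly the omitted details, and filling them in does not change the argument.
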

\begin{proof}
    First of all, by its definition, $\mathcal{X}_\omega$ is convex as $\mathcal{X}$ is.
    Then, noticing that $\nabla h(\bu)=\nabla F_\omega(\bu) - C_K\nabla\Phi_\omega(\bu)$, we get from the definition of the symmetric Bregman divergence and~\cref{eq:brsc} that, for all $\bu,\bu' \in \mathcal{X}_\omega$,
    $$
    D_h^{\ \mathrm{symm}}(\bu, \bu')=\left<(\nabla F_\omega(\bu) - C_K \nabla\Phi_\omega(\bu)) - (\nabla F_\omega(\bu') - C_K\nabla \Phi_\omega(\bu')), \bu - \bu' \right> \geq 0,
    $$
    which proves~\cref{eq:brsc_implies_gcvx} (monotone gradient condition). Then, given that $h$ is convex over $\mathcal{X}_\omega$, we have that, for all $\bu,\bu' \in \mathcal{X}_\omega$,
    $$
    h(\bu) \geq h(\bu') + \left<\nabla h(\bu'), \bu - \bu' \right>.
    $$
    Equivalently,
    $$
    F_\omega(\bu) \geq F_\omega(\bu') +   \left<\nabla F_\omega(\bu'), \bu - \bu' \right> + C_K \left(\Phi_\omega(\bu) - \Phi_\omega(\bu') -\left< \nabla \Phi_\omega(\bu'), \bu - \bu' \right> \right),
    $$
    which completes the proof by definition of $D_{\Phi_\omega}$.
\end{proof}

\subsection{BRSC and the Kullback--Leibler divergence}
To start this section, let us emphasize that our~\cref{eq:brsc} generalizes~\cref{eq:RSC} in two ways. First, it allows a better adaptation to the  geometry of the problem through an appropriate choice of~$\Phi$. Second, it requires the inequality to hold only locally with respect to one of the two variables through the set $\cX$. In fact, in the following we will show that both of these considerations; i.e., $1)$ the choice of $\Phi$ and $2)$ the presence of the subset $\cX\subseteq\cC^N$, turn to be highly relevant for obtaining positive BRSC constants for the Poisson regression problem described in~\Cref{ex:KL}. We start with the following proposition, in which we set the Bregman-generating function $\Phi$ to be the (halved) square norm.

\begin{proposition}\label{propo:negativeKL} Let $\Phi=(1/2) \|\cdot\|_2^2$ and let $\cX\subset\cC^N$ be a compact set. Then, for $G_\by^{\mathrm{KL}}(\bA \cdot)$, we have $C_K = 0$ for any $K \geq 1$.
\end{proposition}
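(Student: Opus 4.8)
The plan is to show directly that the best constant in~\cref{def:brsc}, i.e.
\[
C_K \;=\; \inf\Bigl\{\tfrac{D_{F}^{\ \mathrm{symm}}(\bx,\bx')}{D_{\Phi}^{\ \mathrm{symm}}(\bx,\bx')}\;:\;\bx\in\mathcal{X},\ \bx'\in\mathrm{int}(\dom F)\cap\mathrm{int}(\dom\Phi),\ \bx'\neq\bx,\ \|\bx-\bx'\|_0\leq K\Bigr\},
\]
vanishes. With $\Phi=(1/2)\|\cdot\|_2^2$ one has $D_{\Phi}^{\ \mathrm{symm}}(\bx,\bx')=\|\bx-\bx'\|_2^2$, and since $F=G_\by^{\mathrm{KL}}(\bA\cdot)$ is convex every admissible ratio is $\geq 0$, hence $C_K\geq0$ automatically; it therefore suffices to exhibit one family of admissible pairs along which the ratio tends to $0$. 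I would fix any $\bar\bx\in\mathcal{X}$ and any index $i\in\{1,\dots,N\}$ and follow the ``escaping ray'' $\bx'_t:=\bar\bx+t\,\mathbf{e}_i$, $t>0$. Each pair $(\bar\bx,\bx'_t)$ is admissible: $\bx'_t\neq\bar\bx$, $\|\bar\bx-\bx'_t\|_0=1\leq K$, and since $\bar\bx\in\R_{\geq 0}^N$, \cref{ass:fid3} gives $\bA\bx'_t\in\mathrm{int}(\dom G_\by^{\mathrm{KL}})$, whence $\bx'_t\in\mathrm{int}(\dom F)=\mathrm{int}(\dom F)\cap\mathrm{int}(\dom\Phi)$ (the latter being all of $\R^N$).

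The remaining computation is short. From $\nabla F(\bz)=\bA^{T}\nabla G_\by^{\mathrm{KL}}(\bA\bz)$ and $(g_y^{\mathrm{KL}})'(z)=1-y/z$, a direct calculation gives
\[
D_{F}^{\ \mathrm{symm}}(\bar\bx,\bx'_t)=\bigl\langle\nabla G_\by^{\mathrm{KL}}(\bA\bx'_t)-\nabla G_\by^{\mathrm{KL}}(\bA\bar\bx),\,\bA\bx'_t-\bA\bar\bx\bigr\rangle=\sum_{j=1}^{M}y_j\,\frac{t^{2}a_{ji}^{2}}{p_j\,(p_j+t\,a_{ji})},
\]
where $a_{ji}\geq0$ is the $(j,i)$-entry of $\bA$ and $p_j:=(\bA\bar\bx)_j+b_j\geq b_j>0$. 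Every summand is nonnegative, and bounding it with $p_j+t\,a_{ji}\geq t\,a_{ji}$ (the summands with $a_{ji}=0$ vanishing) yields the crucial \emph{linear} estimate $D_{F}^{\ \mathrm{symm}}(\bar\bx,\bx'_t)\leq t\sum_{j=1}^{M}y_j a_{ji}/p_j=:t\,c_i$ with $c_i\in[0,\infty)$ independent of $t$. Since $D_{\Phi}^{\ \mathrm{symm}}(\bar\bx,\bx'_t)=\|t\,\mathbf{e}_i\|_2^2=t^{2}$, the admissible ratio along this ray obeys $0\leq D_{F}^{\ \mathrm{symm}}(\bar\bx,\bx'_t)/D_{\Phi}^{\ \mathrm{symm}}(\bar\bx,\bx'_t)\leq c_i/t\to0$ as $t\to\infty$. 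Hence the infimum defining $C_K$ is $0$, and since $K\geq1$ was arbitrary this is the claim.

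I do not anticipate a genuine obstacle: the argument merely lifts the scalar fact that $(g_y^{\mathrm{KL}})''(z)=y/z^{2}\to0$ at infinity to the composed functional by moving along a single coordinate. Two points deserve a line of care: (i) checking $\bx'_t\in\mathrm{int}(\dom F)$ for all $t$, which is exactly where $\bA\geq 0$ and the strictly positive background $\mathbf{b}$ are used; and (ii) the quadratic-versus-linear scaling mismatch — the squared distance in the denominator grows like $t^{2}$ while the symmetric Bregman divergence of $F$ grows only like $O(t)$ — which is precisely what forces the constant to $0$. This same mismatch is what makes the Euclidean generator unsuitable here, and it motivates \cref{propo:positiveKL}, where a positive constant is recovered by replacing $\Phi=(1/2)\|\cdot\|_2^2$ with a generator adapted to the KL geometry, together with the compactness of $\mathcal{X}$.
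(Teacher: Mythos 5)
Your proof is correct and follows essentially the same route as the paper's: both fix a base point and send a single coordinate to $+\infty$, exploiting that the scalar KL Hessian $y/(z+b)^2$ vanishes at infinity (equivalently, that $D_F^{\mathrm{symm}}$ grows only linearly along such a ray while $\|\cdot\|_2^2$ grows quadratically). The only cosmetic difference is that the paper first reduces to $C_1 = 0$ via the monotonicity property (\cref{prop:orderBRSCconst}) and then lets the scalar perturbation tend to infinity, whereas you argue directly that single-coordinate pairs are admissible for any $K\geq 1$; the computation of the ratio and the limiting argument are identical.
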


We defer its proof to~\Cref{proof:negativeKL}. In short, by the monotonicity property given in~\cref{prop:orderBRSCconst}, the proof consists in showing that $C_1=0$. The above result demonstrates that, if $\Phi=(1/2)\|\cdot\|_2^2$, even when restricting the first variable of the \cref{eq:brsc} (equivalently \cref{eq:RSC} here) inequality to a compact set $\mathcal{X}$, the best constant is $0$ for $G_\by^{\mathrm{KL}}(\bA \cdot)$. Therefore, the following question arises naturally: what if we consider instead a Bregman-generating function $\Phi$ better adapted to the data term? In the following result, we will actually prove the existence of a nonzero constant $C_K$ for a well chosen Bregman-generating function $\Phi$ and any compact set $\mathcal{X} \subseteq \mathrm{int}(\dom F) \cap \mathrm{int}(\dom \Phi)$.

\begin{theorem}\label{propo:positiveKL}
Let $\Phi$ be the (smoothed) Burg entropy,
\begin{equation}\label{eq:burg}
\Phi(\bx):= \sum_{i=1}^N -\log(x_i + \eta_i),
\end{equation}
for some $\eta_i>0$, $i=1,\ldots, N$, and assume that the matrix $\tilde\bA\in \R_{\geq 0}^{\#\sigma_\by \times N}$, which is the restriction of $\bA$ to the rows indexed by $\sigma_\by := \sigma(\by)$, satisfies~\cref{eq:LRIP} with constant $\delta_K^-<1$. Assume in addition that $\cX$ is a compact set. Then, there exists $C_K>0$ such that~\cref{eq:brsc} holds for $G_\by^{\mathrm{KL}}(\bA \cdot)$.
\end{theorem}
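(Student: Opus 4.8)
The plan is to reduce the claim to an inequality between two Bregman divergences of \emph{Burg type} and then to close it using the compactness of $\cX$ together with \cref{eq:LRIP} for $\tilde\bA$. First I would compute the symmetric divergences explicitly. Since $g_y^{\mathrm{KL}}(z)=z+y(-\log z)+(y\log y-y)$, we have $(g_y^{\mathrm{KL}})'(z)=1-y/z$, so with $u_j:=(\bA\bx)_j+b_j$ and $u_j':=(\bA\bx')_j+b_j$,
\[
D_F^{\ \mathrm{symm}}(\bx,\bx')=\sum_{j=1}^M y_j\,\frac{(u_j'-u_j)^2}{u_j u_j'}.
\]
As $y_j=0$ for $j\notin\sigma_\by$ and $y_j\in\mathbb{Z}_{\geq 1}$ for $j\in\sigma_\by$, and using the chain-rule identity $D^{\ \mathrm{symm}}_{H\circ\bA}=D^{\ \mathrm{symm}}_{H}(\bA\,\cdot\,,\bA\,\cdot\,)$, this yields $D_F^{\ \mathrm{symm}}(\bx,\bx')\geq D^{\ \mathrm{symm}}_{\Psi_c}(\bx,\bx')$, with $\Psi_c(\bx):=-\sum_{j\in\sigma_\by}\log\big((\bA\bx)_j+b_j\big)$ the composite Burg entropy generated by $\tilde\bA$. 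Hence it suffices to prove that $\Psi_c$ satisfies \cref{eq:brsc} with respect to $\Phi$, $\cX$ and $K$; that is, writing $\mathbf d:=\bx'-\bx$, $\omega:=\sigma(\mathbf d)$ (so $\#\omega\leq K$), $v_i:=x_i+\eta_i$ and $v_i':=x_i'+\eta_i$, to exhibit $C_K>0$ with
\[
\sum_{j\in\sigma_\by}\frac{(\bA\mathbf d)_j^2}{u_j u_j'}\ \geq\ C_K\sum_{i\in\omega}\frac{d_i^2}{v_i v_i'},\qquad (\bA\mathbf d)_j=(\tilde\bA_\omega\mathbf d_\omega)_j\ \text{ for }j\in\sigma_\by.
\]

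Next I would freeze the $\bx$-dependent weights by compactness. Since $\cX\subseteq\cC^N=\R_{\geq 0}^N$ is compact and $\bA\geq 0$, $\mathbf b>0$, there are constants with $v_i\in[\eta_{\min},\eta_{\max}]$ and $u_j\in[b_{\min},w_{\max}]$ for all $\bx\in\cX$; moreover, applying \cref{eq:LRIP} to singleton supports shows each column of $\tilde\bA$ is nonzero, so $\alpha:=\min_{l}\sum_{j\in\sigma_\by}\tilde\bA_{jl}>0$. On the domain of interest $\bx'\in\cC^N$, whence $v_i'\geq\eta_{\min}$ and $u_j'\geq b_{\min}$; bounding $u_j\leq w_{\max}$ and $v_i\geq\eta_{\min}$ reduces the target to finding $\kappa>0$ with $\sum_{j\in\sigma_\by}(\bA\mathbf d)_j^2/u_j'\geq\kappa\sum_{i\in\omega}d_i^2/v_i'$, which I would treat by a near/far dichotomy on $\bx'$. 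In the near regime $\|\mathbf d\|_2\leq\rho$, the point $\bx'$ ranges over a fixed compact set, so $u_j'\leq W_\rho$; combining this with the \cref{eq:LRIP} lower bound $\|\tilde\bA_\omega\mathbf d_\omega\|_2^2\geq(1-\delta_K^-)\|\mathbf d_\omega\|_2^2$ and $v_i'\leq\eta_\rho$ yields the inequality for every sufficiently small $\kappa$ (hence $C_K$), the admissible size depending only on $\delta_K^-,\eta_{\min},w_{\max},W_\rho$.

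In the far regime $\|\mathbf d\|_2>\rho$, I would choose $\rho>\sqrt K\,\max_{\bx\in\cX}\|\bx\|_\infty$, so that the dominant coordinate of $\mathbf d$ is positive and $\Sigma:=\sum_{i\in\omega}x_i'$ is large. On the right, the elementary bound $d_i^2/v_i'\leq x_i'+\eta_{\min}^{-1}\max_{\bx\in\cX}\|\bx\|_\infty^2$ gives $\mathrm{RHS}\lesssim\kappa(\Sigma+\mathrm{const})$. On the left, $\bA\geq 0$ and $\bx'\geq 0$ give $\sum_{j\in\sigma_\by}(\bA\bx')_j\geq\alpha\Sigma$, while $(\bA\mathbf d)_j\geq\tfrac12(\bA\bx')_j$ on the rows with $(\bA\bx')_j\geq 2w_{\max}$; discarding the at most $M$ remaining rows then gives $\mathrm{LHS}\gtrsim\alpha w_{\max}^{-1}\Sigma-\mathrm{const}$. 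These two estimates combine once $\Sigma$ exceeds a threshold that does not depend on $\kappa$, which a large enough $\rho$ guarantees. Fixing $\rho$ last and then taking $C_K$ to be the minimum of the two admissible values closes both regimes.

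The hard part is the far regime. There both $D_F^{\ \mathrm{symm}}$ and $D_\Phi^{\ \mathrm{symm}}$ blow up as $\bx'\to\infty$, so the claim really asserts that their (linear) growth rates match; this is exactly where the non-negativity of $\bA$ and of the iterates is indispensable — it allows one to pass from a sum of the quantities $(\bA\bx')_j$ to a sum of the $x_i'$ — and where one must carefully track the additive constants generated by $\cX$ and $\mathbf b$ so that the cut-off $\rho$, and hence $C_K$, can be fixed uniformly over $\bx$, $\bx'$ and $\omega$. The near regime, by contrast, is routine once \cref{eq:LRIP} is available.
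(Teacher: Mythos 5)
Your proposal is correct and follows the same broad strategy as the paper's proof: fix a support $\omega$ with $\#\omega\leq K$, use compactness of $\cX$ to bound the $\bx$-dependent weights, then split on whether $\bx'$ is near $\cX$ or far, using~\cref{eq:LRIP} for the near regime and a linear-growth comparison for the far regime. The differences are in the bookkeeping. The paper first strips the $y_j$ weights with $\delta_1 := \min_{j\in\sigma_\by}y_j/\max_{\bx\in\cX}\|\tilde{\bA}\bx+\mathbf{b}_{\sigma_\by}\|_2$ and applies~\cref{eq:LRIP} once to produce a single ratio $\Gamma(\bu,\bu')$ over $(\bu,\bu')\in[0,Q]^K\times\R_{\geq 0}^K$ — so the LRIP factor $(1-\delta_K^-)$ enters both regimes — and in the far regime works directly with the coordinate set $T=\{i:u_i'>2Q\}$; you instead use $y_j\geq 1$ on $\sigma_\by$ to pass to the row-space Burg divergence $\Psi_c$, use~\cref{eq:LRIP} only in the near regime, and in the far regime substitute the observation $\alpha:=\min_l\sum_{j\in\sigma_\by}\tilde\bA_{jl}>0$ (itself a consequence of~\cref{eq:LRIP} on singletons plus nonnegativity), with a cut based on rows where $(\bA\bx')_j$ is large. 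Both routes close; yours is a touch more redundant in that LRIP enters twice in two guises, but it also makes the role of column positivity of $\tilde\bA$ more visible. One small slip: in your near regime you write $v_i'\leq\eta_\rho$, but to upper-bound $\sum_{i\in\omega}d_i^2/v_i'$ you need the lower bound $v_i'\geq\min_i\eta_i$ (the upper bound on $v_i'$ is what you need on the left-hand side, not here); with that corrected the near case is exactly as routine as you say.
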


We defer its proof to~\Cref{proofthmKL}. To conclude, we point out that, in this case,~\cref{eq:LRIP} turns to be a sufficient condition for~\cref{eq:brsc} to hold true. Additionally, a precise expression of the constant $C_K$ is given in the proof. It depends on both $\delta^-_K$ and the min/max of some quantities over the compact set $\cX$. Therefore, in some sense, finding BRSC constants is as difficult as finding LRIP constants. Finally, a natural question is whether~\cref{eq:LRIP} is sufficient for any data term $G_\by$, not only the KL divergence, to satisfy~\cref{eq:brsc} with a suitable choice of $\Phi$. In a future work, we plan to study the latter question and other consequences related to BRSC.

\section{\texorpdfstring{The $\ell_0$-Bregman relaxation (B-rex)}{The l0-Bregman relaxation}}\label{sec:brex}

In this section, we recall the results of \cite{Essafri2024}, namely the definition of the $\ell_0$-Bregman relaxation, some relevant properties, and the associated exact relaxation result. 

\begin{definition}\label{ass:generating_func_psi} Let $\{\psi_i\}_{i=1}^N$, $\psi_i\colon\R\to\R$ with $\cC\subseteq \dom \psi_i$ for all $i=1,\ldots, N$, be a family of scalar functions such that
\begin{enumerate}[label=(\roman*),leftmargin=*]
    \item\label{ass1psi} $\psi_i$ is strictly convex, proper, and twice differentiable over $\mathrm{int}(\cC)$,
    \item\label{ass2psi} the map $x\mapsto \psi_i'(x)x-\psi_i(x)$ is coercive,
    \item\label{ass3psi} when $\cC=\R$, $\psi_i'$ is odd and, when $\cC=\R_{\geq 0}$, $\psi'_i(0)\geq 0$.
\end{enumerate}
The Bregman-generating function $\Psi$ is defined, for any $\bx\in\cC^N$, as
$$
\Psi\colon\cC^N\to\R, \quad \Psi(\bx):=\sum_{i=1}^N \psi_i(x_i).
$$
Then, given $\lambda_0>0$, the $\ell_0$-Bregman relaxation (B-rex) associated to $\Psi$ is defined, for any $\bx\in\cC^N$, as
$$
B_\Psi(\bx):=\sup_{\rho\in\R}\sup_{\bx'\in\mathrm{int}(\cC^N)} \{\rho-D_\Psi(\bx, \bx') \ : \ \rho-D_\Psi(\cdot, \bx')\leq \lambda_0\|\cdot\|_0\},
$$
where, since $\Psi$ is separable, $D_\Psi$ can be written as the sum of one-dimensional Bregman divergences $d_{\psi_i}$; i.e., for any $\bx\in \cC^N$ and any $\bx'\in\mathrm{int}(\cC^N)$, 
$$
D_\Psi(\bx,\bx')=\sum_{i=1}^N d_{\psi_i}(x_i, x'_i),
$$
being $d_{\psi_i}(x,x'):=\psi_i(x)-\psi_i(x')-\psi_i'(x')(x-x')$ for all $x\in\cC$, $x'\in\mathrm{int}(\cC)$.
\end{definition}

Note that both~\cref{ass1psi} and~\cref{ass2psi} in~\cref{ass:generating_func_psi} are the same as in \cite{Essafri2024}. In particular,~\cref{ass2psi} has been included so that the $\lambda_0$-sublevel sets of each $d_{\psi_i}(0, \cdot)$ are bounded convex sets, and so of the form $[-\alpha_i, \alpha_i]$ when $\cC=\R$ or $[0,\alpha_i]$ when $\cC=\R_{\geq 0}$. Here, we drop the superindices $+$ and $-$ of \cite{Essafri2024} as, if $\cC=\R_{\geq 0}$, then simply $\alpha_i^-=0$ and, from~\cref{ass3psi}, we have  $\alpha^-_i=-\alpha^+_i$ whenever $\cC=\R$. Finally, notice that, although~\cref{ass3psi} is specific to this work, it is satisfied by each of the Bregman-generating functions $\psi_i$ considered in \cite[Table 2]{Essafri2024}. Finally, $B_\Psi$ is separable, $B_\Psi(\bx)=\sum_{i=1}^N\beta_{\psi_i}(x_i)$, where each $\beta_{\psi_i}$ is defined, for all $x \in \cC$, as
\begin{equation}
\beta_{\psi_i}(x):=
\begin{cases}
\psi_i(0)-\psi_i(x)+\mathrm{sign}(x)\psi_i'(\alpha_i)x, & \text{ if } |x|\leq \alpha_i,\\
\qquad \lambda_0, & \text{ if } |x|> \alpha_i.
\end{cases} 
\end{equation}
For further purposes, we recall the closed-form expression of the Clarke's subdifferential of the B-rex penalty. As it is separable, we have that
$$
\partial B_\Psi(\bx)=\partial \beta_{\psi_1}(x_1)\times\cdots\times\partial\beta_{\psi_N}(x_N),
$$
where, when $\cC = \R$, we have
$$
\partial\beta_{\psi_i}(x)=
\begin{cases}
-\psi_i'(x)+\mathrm{sign}(x)\psi_i'(\alpha_i), & \text{if } 0 <|x| \leq \alpha_i,\\
[-\psi_i'(\alpha_i)-\psi_i'(0),\psi_i'(\alpha_i)-\psi_i'(0)], & \text{if } x=0,\\
\qquad 0, & \text{otherwise},
\end{cases}
$$
and, when $\cC = \R_{\geq 0}$,
$$
\partial\beta_{\psi_i}(x)=
\begin{cases}
-\psi_i'(x)+\psi_i'(\alpha_i), & \text{if } x\in[0,\alpha_i],\\
\qquad 0, & \text{otherwise}.
\end{cases}
$$
It turns out that the B-rex relaxed problem~\cref{eq:JPsi} is an exact continuous relaxation of $J_0$ for suitable choices of $\Psi$. 

\begin{theorem}{\cite[Theorem 9]{Essafri2024}}\label{thm:exactrel} Let $\Psi$ be such that the following concavity condition  is satisfied : for any $\bx\in\cC^N$ and $i = 1,\ldots, N$,
\begin{equation} \label{eq:CC}
g(t):=J_{\Psi}(\bx^{(i)}+t\boldsymbol{e}_i) \text{ is strictly concave on } (-\alpha_i, 0) \text{ and } (0, \alpha_i),
\tag{CC}
\end{equation}
where, for any $t\in\R$, $\bx^{(i)}+t\boldsymbol{e}_i:=(x_1,\ldots, x_{i-1}, t, x_{i+1}, \ldots, x_N)$.
Then, $J_\Psi$ is an exact continuous relaxation of $J_0$; i.e., the set of global minimizers of $J_\Psi$ and $J_0$ coincide, and local minimizers of $J_{\Psi}$ are local minimizers of~$J_0$. 
\end{theorem}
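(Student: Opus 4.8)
The plan is to deduce exactness from a single pointwise comparison between $J_\Psi$ and $J_0$ combined with the local--concavity input~\cref{eq:CC}. First I would record the elementary bound $B_\Psi(\bx)\le\lambda_0\|\bx\|_0$ for every $\bx\in\cC^N$: any pair $(\rho,\bx')$ admissible in the supremum defining $B_\Psi$ satisfies $\rho-D_\Psi(\cdot,\bx')\le\lambda_0\|\cdot\|_0$, so evaluating at $\bx$ and taking the supremum over admissible pairs yields the claim; hence $J_\Psi\le J_0$ on $\cC^N$. The second half of this first step is the equality case. Using the closed form of the separable pieces $\beta_{\psi_i}$ recalled above, the finiteness of $\alpha_i$ (\cref{ass:generating_func_psi}\,\ref{ass2psi}) and the defining identity $\psi_i(0)-\psi_i(\alpha_i)+\psi_i'(\alpha_i)\alpha_i=\lambda_0$ for the endpoint $\alpha_i$ of the $\lambda_0$-sublevel set of $d_{\psi_i}(0,\cdot)$, a one-line tangent inequality for $\psi_i$ at $\alpha_i$ --- made \emph{strict} by the strict convexity in \cref{ass:generating_func_psi}\,\ref{ass1psi} --- shows $\beta_{\psi_i}(0)=0$, $\beta_{\psi_i}(x)=\lambda_0$ when $|x|\ge\alpha_i$, and $0\le\beta_{\psi_i}(x)<\lambda_0$ whenever $x$ lies in the open ``relaxation interval'' $(-\alpha_i,0)\cup(0,\alpha_i)$. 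Therefore $J_\Psi(\bx)=J_0(\bx)$ if and only if no coordinate of $\bx$ lies in the corresponding open relaxation interval.

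Next I would use~\cref{eq:CC} to prevent local minimizers of $J_\Psi$ from visiting those intervals. Let $\bx^\circ$ be a local minimizer of $J_\Psi$ and fix an index $i$; freezing all coordinates of $\bx^\circ$ but the $i$-th shows that $x^\circ_i$ is a local minimizer of the one-dimensional slice $g(t)=J_\Psi(x^\circ_1,\ldots,x^\circ_{i-1},t,x^\circ_{i+1},\ldots,x^\circ_N)$ (the open relaxation intervals being contained in $\mathrm{int}(\cC)$, the constraint $\cC^N$ is inactive here). If $x^\circ_i\in(0,\alpha_i)$ --- or $x^\circ_i\in(-\alpha_i,0)$, only the former being relevant when $\cC=\R_{\ge0}$ --- then $g$ is strictly concave on that interval by~\cref{eq:CC}, and a function that is strictly concave on an open interval cannot attain a local minimum there; this contradiction forces $x^\circ_i=0$ or $|x^\circ_i|\ge\alpha_i$ for every $i$. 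By the first step, $J_\Psi(\bx^\circ)=J_0(\bx^\circ)$.

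Finally I would assemble the two assertions. For the local statement, on a neighbourhood of $\bx^\circ$ one has $J_0\ge J_\Psi\ge J_\Psi(\bx^\circ)=J_0(\bx^\circ)$, so $\bx^\circ$ is a local minimizer of $J_0$. For the global statement, existence of minimizers of $J_0$ and $J_\Psi$ is guaranteed by the standing assumptions (\cref{ass:FID} and \cref{ass:generating_func_psi}); picking $\bx^\star\in\argmin J_\Psi$, which is in particular a local minimizer, gives $\min J_\Psi=J_\Psi(\bx^\star)=J_0(\bx^\star)\ge\min J_0\ge\min J_\Psi$, forcing equality throughout and $\bx^\star\in\argmin J_0$; conversely, any $\bar\bx\in\argmin J_0$ obeys $\min J_\Psi\le J_\Psi(\bar\bx)\le J_0(\bar\bx)=\min J_0=\min J_\Psi$, hence $\bar\bx\in\argmin J_\Psi$, so the two solution sets coincide.

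The step carrying the real weight is the first one: pinning down the behaviour of $\beta_{\psi_i}$ on the relaxation interval and, crucially, securing the \emph{strict} inequality $\beta_{\psi_i}(x)<\lambda_0$ there, since it is exactly this strictness (inherited from strict convexity of $\psi_i$) that makes~\cref{eq:CC} usable in the second step; everything after the pointwise sandwich $J_\Psi\le J_0$ with its equality case is soft. It is also worth stressing that the converse of the local-minimizer statement is false in general --- $J_\Psi$ genuinely has fewer local minimizers than $J_0$ --- which is precisely the benefit of passing to the relaxation and is not part of what must be proved here.
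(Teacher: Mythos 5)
The paper states this theorem as a cited result from \cite{Essafri2024} and does not include its own proof, so there is no internal argument to compare against; I assess your proposal on its own terms. Your route is correct and is essentially the standard one for exact relaxations: the pointwise bound $J_\Psi\le J_0$ comes directly from the definition of $B_\Psi$; the equality set is exactly characterized via the closed form of $\beta_{\psi_i}$ (the key identity being $\lambda_0-\beta_{\psi_i}(x)=d_{\psi_i}(x,\mathrm{sign}(x)\alpha_i)$ for $|x|\le\alpha_i$, which is strictly positive on the open relaxation interval by strict convexity of $\psi_i$ and uses \cref{ass:generating_func_psi}\,\ref{ass3psi} for the negative side); strict concavity of the one-dimensional slices supplied by~\cref{eq:CC} rules out interior local minima on those open intervals, forcing every local minimizer of $J_\Psi$ into the equality set; the sandwich then transfers local and global minimality.

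The one soft spot is the appeal to ``existence of minimizers of $J_\Psi$ by the standing assumptions.'' Since $B_\Psi$ is bounded above by $N\lambda_0$, $J_\Psi$ need not inherit coercivity from $J_0$ in the direction of $\ker\bA$, so this step deserves justification rather than assertion. A self-contained fix lives inside your own argument: for any $\bx$, replace each coordinate lying in a relaxation interval by whichever endpoint of $\{0,\pm\alpha_i\}$ minimizes the (concave, hence endpoint-minimized) one-dimensional slice on the closed interval; the resulting $\tilde\bx$ satisfies $J_\Psi(\tilde\bx)\le J_\Psi(\bx)$ and $J_\Psi(\tilde\bx)=J_0(\tilde\bx)\ge\min J_0$. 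This yields $\inf J_\Psi=\min J_0$, hence $\argmin J_0\subseteq\argmin J_\Psi$ (in particular $\argmin J_\Psi\neq\emptyset$), after which your chain of inequalities delivers the reverse inclusion. With that addendum the proof is complete.
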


\section{\texorpdfstring{Optimization landscape analysis of $\ell_0$-Bregman relaxations}{Optimization landscape analysis of l0-Bregman relaxations}}\label{sec:sec4}

The current section will be divided into three parts. First, we provide a simple characterization of the critical points of $J_\Psi$ that will be useful along the rest of this manuscript. Second, we focus on analyzing the optimization landscape of the relaxed functional $J_{\Psi}$, providing sufficient conditions so that there is a global minimizer of $J_\Psi$ that is (what we call) \emph{isolated in terms of sparsity}. Throughout this part, we fix $F:=G_{\by}(\bA\cdot)$. Moreover, in all our examples we have $\cC^N \subseteq \mathrm{int}(\dom F) \cap \mathrm{int}(\dom \Psi)$ and we will thus simply use $\cC^N$ in place of this intersection in what follows.

\subsection{\texorpdfstring{Characterizing the critical points of $J_\Psi$}{Characterizing the critical points of J-Psi}}

In order to find the critical points of the relaxation $J_\Psi$, we first define, for all $\bx\in\cC^N$, the following auxiliary function:
$$
\mathcal{H}(\bx):=B_\Psi(\bx)+\Psi(\bx),
$$
and observe that, by \cite[Proposition 11]{Essafri2024}, $\mathcal{H}$ is the lower semicontinuous convex envelope of $\lambda_0\|\cdot\|_0+\Psi$; i.e. the largest convex function below $\lambda_0\|\cdot\|_0+\Psi$. It follows that
$$
J_\Psi(\bx)=\mathcal{H}(\bx)-\Psi(\bx)+G_{\by}(\bA\bx).
$$
In this way, $J_\Psi$ is written as the sum of two terms: the convex, non-differentiable function $\mathcal{H}$ (due to the non-differentiability of $B_\Psi$), and the non-convex, differentiable part $-\Psi+G_{\by}(\bA\cdot)$. Critical points of $J_\Psi$ are characterized via first-order optimality conditions: $\bx\in\cC^N$ is a critical point of $J_\Psi$ if and only if
$$
\boldsymbol{0}\in \partial \mathcal{H}(\bx)-\nabla\Psi(\bx)+\bA^T\nabla G_{\by}(\bA\bx),
$$
where $\partial \mathcal{H}(\bx)$ denotes the subdifferential operator of $\mathcal{H}$ at $\bx$. Rearranging the terms, we get that
\begin{equation}\label{eq:optconds}
\nabla\Psi(\bx)-\bA^T\nabla G_{\by}(\bA\bx)\in\partial \mathcal{H}(\bx).
\end{equation}
For further convenience, we will associate to any $\bx$ the left hand side of the formula above by defining the following quantity
\begin{equation}\label{eq:z}
\bz:=\nabla\Psi(\bx)-\bA^T\nabla G_{\by}(\bA\bx),
\end{equation}
and in this way the optimality conditions derived in \cref{eq:optconds} simply write $\bz\in\partial\mathcal{H}(\bx)$. Therefore, we conclude that  $\bx\in\cC^N$ is a critical point for $J_\Psi$ if and only if $\bz\in\partial\mathcal{H}(\bx)$. To prove the upcoming results, we will exploit the monotonicity of the operator $\partial \mathcal{H}$. This is the motivation behind the introduction of the convex $\mathcal{H}$ above, so as to separate the non-differentiability and non-convexity of $J_\Psi$ in two different terms.

Before delving into the next section, and for further purposes, we investigate the subdifferential of $\mathcal{H}$. First, notice that $\mathcal{H}$ is separable: for all $\bx\in\cC^N$ we have $\mathcal{H}(\bx)=\sum_{i=1}^N h_i(x_i)$, with $h_i:\cC\to\R$ defined as $h_i(x):= \beta_{\psi_i}(x)+\psi_i(x)$ for any $x\in\cC$. Therefore, it holds true that, for every $\bx\in\cC^N$,
$$
\partial \mathcal{H} (\bx) = \partial h_1(x_1) \times \cdots \times \partial h_N(x_N),
$$
where, if $\cC=\R$, we have that
\begin{equation}\label{eq:subdiffg_i}
\partial h_i(x)= \partial \beta_{\psi_i}(x)+\psi_i'(x)=
\begin{cases}
\mathrm{sign}(x)\psi_i'(\alpha_i), & \text{if} \quad 0<|x|\leq\alpha_i,\\
[-\psi_i'(\alpha_i), \psi_i'(\alpha_i)], & \text{if} \quad x=0,\\
\psi_i'(x), &  \text{otherwise}.
\end{cases}
\end{equation}
In the constrained case $\cC=\R_{\geq 0}$, we get
$$
\partial h_i(x)=
\begin{cases}
\psi_i'(\alpha_i), & \text{if} \quad 0\leq x\leq\alpha_i,\\
\psi_i'(x), &  \text{otherwise}.
\end{cases}
$$
To conclude, we point out that, as $\partial \mathcal{H}$ is separable, the condition $\bz\in\partial\mathcal{H}(\bx)$ can be equivalently analyzed element-wise: $\bz\in\partial\mathcal{H}(\bx)$, with $\bz$ defined as in \cref{eq:z}, if and only if $z_i\in\partial h_i(x_i)$ for all $i=1,\ldots, N$.
\subsection{Uniqueness of a sparse minimizer with B-rex}\label{sec:uniquesparse}
In this section, we will show that, under certain conditions, $J_{\Psi}$ admits an isolated sparse global minimizer, in the sense that any other critical point has a much larger cardinality. The theorem below will serve as the cornerstone of this work.
\begin{theorem}\label{thm:sparsest} 
    Assume that $F$ satisfies~\cref{eq:brsc} with respect to $\Psi$, $\cX$, $K$, and with constant $C_K >0$. Let $\bx\in\cX$ be a critical point of $J_\Psi$ such that, for every $i=1,\ldots, N$,
    \begin{equation}\label{eq:zinotin}
        |z_i|\notin \left[C_K(\psi_i'(\alpha_i)-\psi_i'(0)) + \psi_i'(0), \frac{\psi_i'(\alpha_i)- \psi_i'(0)}{C_K} + \psi_i'(0)\right],
    \end{equation}
    where $\bz$ is given by \cref{eq:z}. Then, any other critical point $\bx'\in\cC^N$ of $J_\Psi$, $\bx'\neq\bx$, satisfies $\|\bx'-\bx\|_0>K$.
\end{theorem}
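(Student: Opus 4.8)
The plan is to argue by contradiction. I would assume that there exists another critical point $\bx'\in\cC^N$ with $\bx'\neq\bx$ and $\|\bx'-\bx\|_0\leq K$, and then contradict the \cref{eq:brsc} inequality applied to the pair $(\bx,\bx')$. Note this pair is admissible for \cref{eq:brsc}, since $\bx\in\cX$, $\bx'\in\cC^N\subseteq\mathrm{int}(\dom F)\cap\mathrm{int}(\dom\Psi)$, $\bx'\neq\bx$, and $\|\bx-\bx'\|_0\leq K$.

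First, using the characterization of critical points in \cref{eq:optconds}--\cref{eq:z}, I would set $\bz':=\nabla\Psi(\bx')-\bA^{T}\nabla G_{\by}(\bA\bx')$, so that $\bz\in\partial\mathcal{H}(\bx)$ and $\bz'\in\partial\mathcal{H}(\bx')$. Since $\nabla F=\bA^{T}\nabla G_{\by}(\bA\cdot)$, expanding $\bz-\bz'$ and invoking the definition of the symmetric Bregman divergence gives the identity
$$
\langle\bz-\bz',\bx-\bx'\rangle = D_{\Psi}^{\ \mathrm{symm}}(\bx,\bx') - D_{F}^{\ \mathrm{symm}}(\bx,\bx').
$$
By monotonicity of $\partial\mathcal{H}$ one has $\langle\bz-\bz',\bx-\bx'\rangle\geq0$, which together with \cref{eq:brsc} only yields $(1-C_K)D_{\Psi}^{\ \mathrm{symm}}(\bx,\bx')\geq0$ — not yet a contradiction. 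The whole point of hypothesis \cref{eq:zinotin} is to upgrade this to a \emph{strict} estimate, obtained coordinate by coordinate.

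The key step, which I expect to be the main obstacle, is a one-dimensional claim: for every $i=1,\dots,N$,
$$
(z_i-z_i')(x_i-x_i')\ \geq\ (1-C_K)\,(\psi_i'(x_i)-\psi_i'(x_i'))(x_i-x_i'),
$$
with strict inequality whenever $x_i\neq x_i'$. I would prove this from $z_i\in\partial h_i(x_i)$, $z_i'\in\partial h_i(x_i')$, the explicit form of $\partial h_i$ in \cref{eq:subdiffg_i} (and its $\cC=\R_{\geq0}$ counterpart), and the monotonicity of $\psi_i'$. Hypothesis \cref{eq:zinotin} enters twice: on the one hand it forces $x_i$ to be either $0$ or to satisfy $|x_i|>\alpha_i$ (indeed, for $0<|x_i|\leq\alpha_i$ one has $|z_i|=\psi_i'(\alpha_i)$, which lies in the forbidden interval when $C_K\leq1$); on the other hand, in each of the two remaining situations it supplies exactly the margin needed — $|z_i|$ bounded below away from $\psi_i'(\alpha_i)$ when $x_i=0$, and $C_K|z_i|>\psi_i'(\alpha_i)$ when $|x_i|>\alpha_i$ (here using $\psi_i'(0)=0$ in the case $\cC=\R$) — for the one-dimensional inequality to close with the factor $1-C_K$. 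The remaining work is a routine case split on the position of $x_i'$ relative to $0$ and $\pm\alpha_i$; the constrained case $\cC=\R_{\geq0}$ is entirely analogous, with only nonnegative values to consider.

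Finally, since $\bx\neq\bx'$ there is at least one index $i$ with $x_i\neq x_i'$, so summing the coordinate-wise claim over $i$ yields
$$
\langle\bz-\bz',\bx-\bx'\rangle\ >\ (1-C_K)\sum_{i=1}^{N}(\psi_i'(x_i)-\psi_i'(x_i'))(x_i-x_i') = (1-C_K)\,D_{\Psi}^{\ \mathrm{symm}}(\bx,\bx').
$$
Substituting into the identity above gives $D_{F}^{\ \mathrm{symm}}(\bx,\bx')<C_K D_{\Psi}^{\ \mathrm{symm}}(\bx,\bx')$, contradicting \cref{eq:brsc}. Hence no critical point $\bx'\neq\bx$ can satisfy $\|\bx'-\bx\|_0\leq K$, i.e.\ every other critical point of $J_\Psi$ obeys $\|\bx'-\bx\|_0>K$, as claimed. (When $C_K>1$ the interval in \cref{eq:zinotin} is empty, but the coordinate-wise claim — hence the whole argument — still holds, since its right-hand side is then nonpositive while its left-hand side is nonnegative.)
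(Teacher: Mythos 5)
Your proposal is correct and follows essentially the same route as the paper's proof: the identity $\langle\bz-\bz',\bx-\bx'\rangle = D_{\Psi}^{\ \mathrm{symm}}(\bx,\bx') - D_{F}^{\ \mathrm{symm}}(\bx,\bx')$ is precisely the content of Proposition 7.1 in the paper, and your coordinate-wise strict inequality is exactly what the paper's Lemmas 7.2 and 7.3 (for $C_K<1$) establish, together with the separate treatments of $C_K=1$ and $C_K>1$ that the paper carries out in the proof of the theorem itself. The only cosmetic difference is that you package the argument as a contradiction (assume $\|\bx'-\bx\|_0\leq K$, derive $D_F^{\ \mathrm{symm}}<C_K D_\Psi^{\ \mathrm{symm}}$, contradict BRSC), whereas the paper states an intermediate proposition for sparse pairs and then shows critical points violate it, but these are logically equivalent.
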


The proof is provided in~\Cref{proof:thm41}. \cref{thm:sparsest} states that, under the BRSC property of $F$ with respect to $\Psi$, $\mathcal{X}$, $K$, and with constant $C_K$, if $J_\Psi$ admits a critical point $\bx \in \mathcal{X}$ with $\|\bx\|_0 \leq K/2$ such that~\cref{eq:zinotin} holds, then, by the triangle inequality, it is the sparsest one, in the sense that any other critical point $\bx' \in \cC^N$ has a cardinality $\|\bx'\|_0 > K/2$. Indeed. Observe that
$$
K<\|\bx-\bx'\|_0\leq \|\bx\|_0+\|\bx'\|_0 \leq \frac{K}{2}+\|\bx'\|_0,
$$
which directly implies that $\|\bx'\|_0 > K/2$, as claimed. In particular, if $\|\bx\|_0 \ll K/2$ then any other critical point $\bx'$ is such that $\|\bx'\|_0 \gg K/2$. Therefore, we say that a critical point $\bx$ of $J_\Psi$ that satisfies the assumptions of~\cref{thm:sparsest} is ``isolated'' in terms of sparsity, which indicates a favorable optimization landscape for~$J_\Psi$.  We next comment on the required assumptions in the above result. 

First, notice that the symmetry in condition~\cref{eq:zinotin} regarding the treatment of $z_i$ and $-z_i$ is due to \cref{ass3psi} in \cref{ass:generating_func_psi} which allows us to simplify the presentation. If $\cC=\R$ and the $\psi'_i$ were not odd, as already indicated after~\cref{ass:generating_func_psi}, we first recover the notation $\alpha_i^-$ and $\alpha_i^+$ as in \cite{Essafri2024}  (instead of $\pm\alpha_i$ as we do here), and then consider two intervals instead of only one as in~\cref{eq:zinotin}: one associated to $\alpha_i^+$ (i.e., the one given in~\cref{eq:zinotin}), which is contained in $\R_{\geq 0}$, and another one associated to $\alpha_i^-$, which is contained in $\R_{\leq 0}$. In this way, condition~\cref{eq:zinotin} would translate into requiring that $z_i$ does not belong to the union of both intervals. 

Next, given a critical point $\bx\in \cC^N$ of $J_\Psi$, the BRSC requirement to apply this theorem can be reduced to the singleton $\mathcal{X} = \{\bx\}$. This would lead to better (larger)  BRSC constants (cf.~\cref{prop:orderBRSCconst}), and therefore a less restrictive condition in~\cref{eq:zinotin}, compared to imposing~\cref{eq:brsc} globally over the entire space $\mathcal{X} = \cC^N$. 

Finally, we point out that when $C_K>1$, the  interval in~\cref{eq:zinotin} is the empty set, and so the stated condition is always satisfied. This leads to the following corollary. 
\begin{corollary}
    Assume that $F$ satisfies~\cref{eq:brsc} with respect to $\Psi$, $\cX$, $K$, and with constant $C_K>1$. Then, $J_\Psi$ admits \emph{at most} one critical point $\bx\in\cX$ such that $\|\bx\|_0 \leq K/2$.
\end{corollary}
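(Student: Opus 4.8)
The plan is to reduce the statement to a direct application of \Cref{thm:sparsest}. Suppose, for contradiction, that $J_\Psi$ admits two distinct critical points $\bx, \bx' \in \cX$ with $\|\bx\|_0 \leq K/2$ and $\|\bx'\|_0 \leq K/2$. Since $C_K > 1$, the interval appearing in~\cref{eq:zinotin},
$$
\left[C_K(\psi_i'(\alpha_i)-\psi_i'(0)) + \psi_i'(0),\ \frac{\psi_i'(\alpha_i)- \psi_i'(0)}{C_K} + \psi_i'(0)\right],
$$
is empty for every $i$: indeed, by~\cref{ass1psi} in~\cref{ass:generating_func_psi}, $\psi_i'$ is strictly increasing, and since $\alpha_i > 0$ we have $\psi_i'(\alpha_i) - \psi_i'(0) > 0$; hence $C_K > 1$ forces the left endpoint to strictly exceed the right endpoint. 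Consequently condition~\cref{eq:zinotin} is vacuously satisfied at $\bx$, whatever the value of $\bz$ defined by~\cref{eq:z}.

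With~\cref{eq:zinotin} holding at $\bx$, \Cref{thm:sparsest} applies and yields that any critical point $\bx'' \in \cC^N$ of $J_\Psi$ distinct from $\bx$ satisfies $\|\bx'' - \bx\|_0 > K$. Applying this with $\bx'' = \bx'$ (which is admissible since $\bx' \in \cX \subseteq \cC^N$ and $\bx' \neq \bx$) gives $\|\bx' - \bx\|_0 > K$. On the other hand, the triangle inequality for $\|\cdot\|_0$ combined with the two cardinality bounds gives
$$
\|\bx' - \bx\|_0 \leq \|\bx\|_0 + \|\bx'\|_0 \leq \frac{K}{2} + \frac{K}{2} = K,
$$
which contradicts $\|\bx' - \bx\|_0 > K$. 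Hence no two such distinct critical points can exist, i.e.\ $J_\Psi$ admits at most one critical point $\bx \in \cX$ with $\|\bx\|_0 \leq K/2$.

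Since the argument is essentially a packaging of \Cref{thm:sparsest} together with the observation that $C_K > 1$ empties the forbidden interval, there is no real obstacle here; the only point that requires a moment of care is verifying that the interval is genuinely empty, which relies on the strict monotonicity of $\psi_i'$ and on $\alpha_i > 0$ (the latter being guaranteed by~\cref{ass2psi} in~\cref{ass:generating_func_psi}, which ensures the sublevel sets are of the form $[-\alpha_i,\alpha_i]$ or $[0,\alpha_i]$ with $\alpha_i > 0$). One should also note that $C_K$ is required to be strictly greater than $1$, not merely $\geq 1$: if $C_K = 1$ the interval degenerates to the single point $\psi_i'(\alpha_i)$, which need not be excluded.
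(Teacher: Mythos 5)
Your proof is correct and follows the same route the paper intends: observe that $C_K>1$ makes the interval in~\cref{eq:zinotin} empty (so the hypothesis of~\cref{thm:sparsest} is vacuously met), apply~\cref{thm:sparsest} to conclude $\|\bx'-\bx\|_0>K$, and then derive a contradiction with the triangle inequality $\|\bx'-\bx\|_0\leq\|\bx\|_0+\|\bx'\|_0\leq K$. Your explicit verification that the interval is genuinely empty (via strict monotonicity of $\psi_i'$ and $\alpha_i>0$), and your remark that $C_K=1$ only collapses it to a singleton rather than emptying it, are careful touches that the paper leaves implicit.
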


The following theorem provides a sufficient condition on $\lambda_0$ under which the sparsest critical point of $J_\Psi$, in the sense of~\cref{thm:sparsest}, is  the unique global minimizer of~$J_\Psi$.

\begin{theorem}\label{thm:uniqueglob} 
    Assume that $F$ satisfies~\cref{eq:brsc} with respect to $\Psi$, $\cX$, $K$, and with constant $C_K>0$. Let $\bx\in\cX$ be a critical point of $J_\Psi$ such that~\cref{eq:zinotin} holds. If we have in addition that \cref{eq:CC} is satisfied and
    \begin{equation}\label{eq:uniqglobcond}
       \lambda_0>\frac{F(\bx)}{1+K-2\|\bx\|_0}, 
    \end{equation}
    then $\bx$ is the unique global minimizer of $J_{\Psi}$ (and so of $J_0$), and any other critical point $\bx'\in\cC^N$ of $J_\Psi$, $\bx'\neq\bx$, satisfies $\|\bx'-\bx\|_0>K$.
\end{theorem}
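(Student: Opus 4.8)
The plan is to obtain the two assertions by combining the already-established \Cref{thm:sparsest,thm:exactrel} with the nonnegativity of $F=G_{\by}(\bA\cdot)$ and an elementary counting argument on $\ell_0$-values; note that the \cref{eq:brsc} hypothesis is consumed entirely by \Cref{thm:sparsest} and is not invoked again here.

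First I would record that the second assertion is nothing but \Cref{thm:sparsest}: the standing hypotheses on $F$, on the critical point $\bx\in\cX$, and \cref{eq:zinotin}, are exactly those of that theorem, so every critical point $\bx'\neq\bx$ of $J_\Psi$ satisfies $\|\bx'-\bx\|_0>K$. It then remains to show that $\bx$ is the unique global minimizer of $J_\Psi$; the corresponding claim for $J_0$ will follow from \Cref{thm:exactrel}, which applies because \cref{eq:CC} is assumed. Two preliminary observations are needed. (i) A global minimizer of $J_\Psi$ exists: under \Cref{ass:FID} the problem \cref{eq:J0intro} admits a solution, and by the exactness in \Cref{thm:exactrel} the global minimizers of $J_0$ and $J_\Psi$ coincide, so $\argmin J_\Psi\neq\emptyset$. (ii) Every global minimizer $\bx'$ of $J_\Psi$ is a critical point of $J_\Psi$, and also a global minimizer of $J_0$: writing $J_\Psi=\mathcal{H}-\Psi+G_{\by}(\bA\cdot)$ with $\mathcal{H}$ convex and $-\Psi+G_{\by}(\bA\cdot)$ of class $C^1$, first-order optimality gives $\boldsymbol{0}\in\partial\mathcal{H}(\bx')-\nabla\Psi(\bx')+\bA^{T}\nabla G_{\by}(\bA\bx')$, i.e., $\bz'\in\partial\mathcal{H}(\bx')$ in the notation of \cref{eq:z}, and the global-minimizer statement for $J_0$ follows again from \Cref{thm:exactrel}.

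The heart of the proof is then a contradiction argument. Suppose $\bx'$ is a global minimizer of $J_\Psi$ with $\bx'\neq\bx$. By (ii) it is a critical point of $J_\Psi$, so the second assertion gives $\|\bx'-\bx\|_0>K$; since $\|\bx'\|_0$, $\|\bx\|_0$ and $K$ are integers and $\|\bx'-\bx\|_0\leq\|\bx'\|_0+\|\bx\|_0$, this sharpens to $\|\bx'\|_0\geq K+1-\|\bx\|_0$. On the other hand, $\bx'$ is a global minimizer of $J_0$, so $J_0(\bx')\leq J_0(\bx)$, that is $G_{\by}(\bA\bx')+\lambda_0\|\bx'\|_0\leq G_{\by}(\bA\bx)+\lambda_0\|\bx\|_0$; discarding the nonnegative term $G_{\by}(\bA\bx')$ and inserting the lower bound on $\|\bx'\|_0$ yields
\[
\lambda_0\,(1+K-2\|\bx\|_0)\ \leq\ \lambda_0\|\bx'\|_0-\lambda_0\|\bx\|_0\ \leq\ G_{\by}(\bA\bx)=F(\bx),
\]
which contradicts \cref{eq:uniqglobcond}, provided $1+K-2\|\bx\|_0>0$, i.e., $\|\bx\|_0\leq K/2$ — the regime in which \cref{eq:uniqglobcond} is a genuine (positive, finite) lower bound, in line with the discussion following \Cref{thm:sparsest}. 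Hence no global minimizer of $J_\Psi$ is distinct from $\bx$, and since one exists, $\bx$ is the unique global minimizer of $J_\Psi$, and by \Cref{thm:exactrel} the unique global minimizer of $J_0$ as well.

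I expect the only delicate point to be the counting step: one must upgrade the strict inequality $\|\bx'-\bx\|_0>K$ to $\|\bx'\|_0\geq K+1-\|\bx\|_0$ by integrality, since the weaker bound $\|\bx'\|_0>K-\|\bx\|_0$ would merely give $\lambda_0<F(\bx)/(K-2\|\bx\|_0)$, which does not contradict \cref{eq:uniqglobcond} — this is precisely why the $+1$ appears in its denominator — and one must remain in the regime $\|\bx\|_0\leq K/2$ for the concluding inequality to be genuinely contradictory. Everything else is a direct application of the quoted results.
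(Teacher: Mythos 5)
Your proof is correct and follows essentially the same route as the paper's: a contradiction argument that combines the sparsity-separation guarantee from \cref{thm:sparsest} with the integrality-sharpened count $\|\bx'\|_0\geq K+1-\|\bx\|_0$ and the nonnegativity of $G_\by$, under \cref{eq:uniqglobcond}. The only stylistic difference is that you organize the contradiction at the level of $J_0$ (invoking the exactness \cref{thm:exactrel} to transfer the global-minimizer property), whereas the paper works directly with $J_\Psi$ via $J_\Psi(\bx')=J_0(\bx')$ for a local minimizer and $J_\Psi(\bx)\leq J_0(\bx)$; both reduce to the same inequality chain, and your explicit remarks on existence of a global minimizer and on the regime $1+K-2\|\bx\|_0>0$ are accurate refinements of points the paper leaves implicit.
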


We present the proof in~\Cref{proof_thm:uniqueglob}. 

In this section, we have shown that under suitable conditions, the global minimizer of the exact relaxation $J_\Psi$ is the sparsest critical point and is isolated in terms of sparsity; i.e., any other critical point has a much larger cardinality. It is worth mentioning that such nice properties do not hold for the original function $J_0$. Indeed, for any support $\omega \subseteq \{1,\ldots,N\}$, $J_0$ admits a local minimizer~\cite[Proposition 2]{Essafri2024}. As such, there exist many local minimizers for each sparsity level in $J_0$. Hence, while preserving all global minimizers of $J_0$, the exact relaxation $J_\Psi$ exhibits a more favorable optimization landscape. To conclude, we note that, actually, all the results of the present section hold for any twice differentiable function $F$ that satisfies the conditions given in~\cref{ass:FID}, meaning that $F$ does not necessarily need to be a data fidelity term.

\section{Towards the oracle solution}\label{sec:oracsol}
In this section, we exploit the obtained results to the general problem of recovering the ground truth $\bx^*$ from the noisy measurements $\by$ in \cref{eq:model}. Specifically, we study under which conditions the so-called oracle solution is the global minimizer of both $J_0$ and its exact relaxation $J_\Psi$ and, in addition, is isolated in terms of sparsity for $J_\Psi$. From now on, we will assume that the concavity condition given in \cref{eq:CC} holds; that is, $J_\Psi$ is an exact continuous relaxation of $J_0$. The starting point of this section is the sparse inverse problem described in the introduction: we aim to recover an unknown source $\bx^*\in\cC^N$ from noisy measurements $\by$ given by~\cref{eq:model}. We denote the support of $\bx^*$ by $\sigma^*:=\sigma(\bx^*)$, its length as $k^* = \# \sigma^*$, and we let $\bu^*= \bx^*_{\sigma^*} \in\cC^{k^*}$; i.e., $\bu^*$ denotes the true solution $\bx^*$ restricted to its support. Throughout this section, we set $F=G_\by(\bA\cdot)$ and we assume that the restricted matrix $\bA_{\sigma^*}$ has full rank. Next, we define the \emph{oracle solution} of \cref{eq:model} as $\bx^\mathrm{or}=Z_{\sigma^*}(\bu^{\mathrm{or}})$, with 
\begin{equation}\label{eq:oracsol}
\mathbf{u}^\mathrm{or} = \argmin_{\mathbf{u}\in\cC^{k^*}} F_{\sigma^*}(\bu), 
\end{equation}
where we recall that the restricted function $F_{\sigma^*}$ reads 
$
F_{\sigma^*}(\bu):=G_\by(\bA_{\sigma^*}\bu).
$

Note also that $\bu^\mathrm{or}$ is unique as $F_{\sigma^*}$ is a strictly convex and coercive function because $G_\by$ is strictly convex and coercive, and $\bA_{\sigma^*}$ has full rank. Moreover, as $\bu^{\mathrm{or}}$ is the minimizer of $F_{\sigma^*}$, it holds true that for all $\bu \in \cC^{k^*}$,
\begin{equation}\label{eq:gradFzeor_xor}
\left< \nabla F_{\sigma^*}(\bu^\mathrm{or}) , \bu - \bu^\mathrm{or} \right> \geq 0. 
\end{equation}
Moreover, on the oracle support $\sigma^\mathrm{or} := \sigma(\bx^\mathrm{or}) \subseteq  \sigma^*$ we have $\bu^\mathrm{or} \in \mathrm{int}(\cC^{\sharp \sigma^\mathrm{or}})$, and thus
\begin{equation}\label{eq:zerogradientcond}
\nabla F_{\sigma^\mathrm{or}}(\bu^\mathrm{or}) = \bA_{\sigma^\mathrm{or}}^T\nabla G_{\by}(\bA_{\sigma^\mathrm{or}}\bu^\mathrm{or})=\boldsymbol{0}.
\end{equation}
Finally, by construction,  $\bx^\mathrm{or}$ is a local minimizer of $J_0$~\cite[Proposition 2]{Essafri2024}.

\begin{remark}
   Notice that asking for $\bA_{\sigma^*}$ to be full rank has several implications. First, it serves to define the oracle solution within our context. Additionally, and as we aim to show that the oracle solution is a global minimizer of $J_\Psi$ (and hence of $J_0$ too), there would be no hope to show this if $\bA_{\sigma^*}$ were not full rank. Indeed, by \cite[Theorem 4]{Essafri2024}, all global minimizers of $J_0$ are strict. Moreover, by \cite[Theorem 3]{Essafri2024}, any local minimizer $\bx$ of $J_0$ is  strict if the restricted matrix $\bA_{\sigma(\bx)}$ is full rank. Therefore, if $\bA_{\sigma^*}$ is not full rank, $\bx^{\mathrm{or}}$ cannot be a global minimizer of $J_\Psi$. 
\end{remark}

Next, on top of the concavity condition \cref{eq:CC} that ensure $J_\Psi$ to be an exact relaxation of $J_0$, we make an additional structural assumption on $F$. 

\begin{assumption}\label{ass:BRSCor} We assume that $F$ satisfies~\cref{eq:brsc} with respect to $\Psi$, $K\geq 2k^*$, and a compact set $\cX$ such that $\bx^{\mathrm{or}}, \bx^*\in\cX$. Additionally, we require that its construction does not depend on $\bx^{\mathrm{or}}$. For instance, a valid choice for $\cX$ is
\begin{equation}\label{eq:compactX}
\cX:=\{\bx\in\cC^N  \mid  \mathrm{supp}(\bx)\subseteq \sigma^* \text{ and } F_{\sigma^*}(\bx_{\sigma^*})\leq F_{\sigma^*}(\bx^*_{\sigma^*})\}.
\end{equation}
\end{assumption}
The required conditions on the set $\cX$ in the above assumption are motivated by three underlying considerations. First, the subsequent analysis builds on the results of~\Cref{sec:uniquesparse}, which must be applied at both $\bx^*$ and $\bx^{\mathrm{or}}$. Therefore, $\cX$ is required to contain both points. For instance, if we consider $\cX$ given by~\cref{eq:compactX}, the inclusion of $\bx^*$ is trivial, while that of  $\bx^{\mathrm{or}}$ follows from its definition. Second, it should be as small as possible to get better BRSC constants (cf.~\cref{prop:orderBRSCconst}). In our example, this is achieved with the sublevel-set constraint. Moreover, the compactness assumption is particularly important for guaranteeing nonzero BRSC constants in cases such as the KL fidelity term (cf.~\Cref{propo:positiveKL}). Notice that, by coercivity of $F_{\sigma^*}$, $\cX$ given by~\cref{eq:compactX} is  compact. In contrast, for the quadratic case ($F=G^{\mathrm{LS}}_\by(\bA\cdot)$), this compactness assumption does not affect BRSC constants (as it holds globally) and can therefore be omitted. Lastly, we aim to derive conditions that depend only on the true solution $\bx^*$, and not on the oracle $\bx^{\mathrm{or}}$. As such, $\cX$ should depend only on $\bx^*$. 

The remainder of this section will be divided into two parts. First, we will show that, under certain conditions, it is possible to ensure that the oracle solution $\bx^\mathrm{or}$, which is a local minimizer of $J_0$ by definition, is also a local minimizer of the exact relaxation $J_\Psi$.
Such result will be presented in~\cref{thm:x'locmin}. Next, we will analyze whether these conditions can be completed in order to show that $\bx^\mathrm{or}$ is the global minimizer of $J_\Psi$ and is also isolated in terms of sparsity. We present this result in~\cref{thm:oracuniquesparsest}.

\subsection{\texorpdfstring{The oracle solution is a local minimizer of $J_\Psi$}{The oracle solution is a local minimizer of JPsi}}
As already anticipated, we investigate in this section the geometrical properties of the oracle solution $\bx^\mathrm{or}$. 
To that end, we first introduce the notion of \emph{safe oracle region}, borrowing the terminology from the safe screening literature~\cite{ElGhaoui2012}.
\begin{definition}[Safe oracle region]\label{def:safe_oracle_region}
    A subset $S \subseteq \cC^{k^*}$ is said to be a safe oracle region if it contains  $\bu^{\mathrm{or}}$. Moreover, it should be defined without knowledge of $\bu^{\mathrm{or}}$.
\end{definition}
Clearly, the restriction of $\cX$ (defined in~\cref{ass:BRSCor}) to $\sigma^*$, i.e., $\cX_{\sigma^*}$, defines a safe oracle region. However, its definition involves a sublevel-set constraint that is difficult to handle in the subsequent analysis, since it couples all variables through the operator $\bA_{\sigma^*}$ in $F_{\sigma^*}$. In contrast, the proposition below introduces a safe oracle region whose construction only involves separable functions.
\begin{proposition}\label{propo:safe_region}
    Under~\cref{ass:BRSCor}, the following set 
    \begin{equation}\label{eq:setS2}
    \mathfrak{S}:=\{\bu\in\cC^{k^*} \mid F_{\sigma^*}(\bu^*)  \geq C_K D_{\Psi_{\sigma^*}}(\bu^*,\bu) \}, 
    \end{equation}
    where $D_{\Psi_{\sigma^*}}( \bu^*,\bu) := \sum_{j = 1}^{k^*} d_{\psi_{\sigma^*[j]}}(u^*_j,u_j)$, is a safe oracle region. 
\end{proposition}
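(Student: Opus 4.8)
The plan is to show that $\bu^{\mathrm{or}} \in \mathfrak{S}$, since the remaining requirement of Definition 5.4 — that $\mathfrak{S}$ be constructible without knowledge of $\bu^{\mathrm{or}}$ — is immediate from the explicit formula (5.6), which involves only $\bx^*$ (through $\sigma^*$ and $\bu^* = \bx^*_{\sigma^*}$), the data term $F_{\sigma^*}$, the Bregman generating function $\Psi_{\sigma^*}$, and the constant $C_K$ coming from Assumption 5.3. So the whole content is the membership $\bu^{\mathrm{or}} \in \mathfrak{S}$, i.e. the inequality
$$
F_{\sigma^*}(\bu^*) \ \geq\ C_K\, D_{\Psi_{\sigma^*}}(\bu^*, \bu^{\mathrm{or}}).
$$

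The key idea is to combine three facts. First, since $F$ satisfies (BRSC) with respect to $\Psi$, the compact set $\cX$, and $K \geq 2k^*$ (Assumption 5.3), and since $\bx^*, \bx^{\mathrm{or}} \in \cX$ with $\|\bx^* - \bx^{\mathrm{or}}\|_0 \leq 2k^* \leq K$, I can apply the consequence of BRSC restricted to the support $\omega = \sigma^*$ (note $\#\sigma^* = k^* \leq K$) given in Proposition 3.4\ref{eq:brsc_implies_con_ineq}: with $\cX_{\sigma^*} = \{\bu \in \R^{k^*} : Z_{\sigma^*}(\bu) \in \cX\}$, and since both $\bu^*$ and $\bu^{\mathrm{or}}$ lie in $\cX_{\sigma^*}$ (because $\bx^*, \bx^{\mathrm{or}} \in \cX$ and both are supported in $\sigma^*$), we have
$$
F_{\sigma^*}(\bu^*) \ \geq\ F_{\sigma^*}(\bu^{\mathrm{or}}) + \bigl\langle \nabla F_{\sigma^*}(\bu^{\mathrm{or}}),\, \bu^* - \bu^{\mathrm{or}} \bigr\rangle + C_K\, D_{\Psi_{\sigma^*}}(\bu^*, \bu^{\mathrm{or}}).
$$
Second, $\bu^{\mathrm{or}}$ is the minimizer of $F_{\sigma^*}$ over $\cC^{k^*}$, so by the first-order optimality condition (5.3) (i.e. \eqref{eq:gradFzeor_xor}), the inner product term $\langle \nabla F_{\sigma^*}(\bu^{\mathrm{or}}), \bu^* - \bu^{\mathrm{or}} \rangle$ is $\geq 0$. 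Third, $F_{\sigma^*}(\bu^{\mathrm{or}}) \geq 0$ since $G_{\by}$ is nonnegative (Assumption 2.4), and in any case $F_{\sigma^*}(\bu^{\mathrm{or}}) \geq 0$. Dropping these two nonnegative terms from the right-hand side yields $F_{\sigma^*}(\bu^*) \geq C_K D_{\Psi_{\sigma^*}}(\bu^*, \bu^{\mathrm{or}})$, which is exactly the defining inequality of $\mathfrak{S}$ evaluated at $\bu^{\mathrm{or}}$. Hence $\bu^{\mathrm{or}} \in \mathfrak{S}$.

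The one point that needs a little care — and is the only real obstacle — is checking that the hypotheses of Proposition 3.4 genuinely apply here: namely that $\cX$ is convex (so that $\cX_{\sigma^*}$ is convex and the convexity-of-$h$ argument goes through), and that both $\bu^*$ and $\bu^{\mathrm{or}}$ lie in $\cX_{\sigma^*}$. For the choice of $\cX$ in \eqref{eq:compactX}, convexity follows from convexity of $F_{\sigma^*}$ (a sublevel set of a convex function, intersected with the coordinate subspace $\{\bx : \mathrm{supp}(\bx) \subseteq \sigma^*\}$), and membership of $\bx^*$ is trivial while membership of $\bx^{\mathrm{or}}$ follows from $F_{\sigma^*}(\bu^{\mathrm{or}}) \leq F_{\sigma^*}(\bu^*)$ by definition of the oracle; for a general $\cX$ satisfying Assumption 5.3 one simply assumes convexity (as Assumption 5.3 tacitly requires, since Proposition 3.4 is invoked) and uses $\bx^*, \bx^{\mathrm{or}} \in \cX$ directly. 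I would state this verification explicitly at the start of the proof so that the subsequent chain of inequalities is unambiguous, and then the rest is the three-line computation above.
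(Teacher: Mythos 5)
Your proposal is correct and follows essentially the same route as the paper's own proof: apply \cref{prop:implications_BRSC}\cref{eq:brsc_implies_con_ineq} restricted to $\omega = \sigma^*$ at the pair $(\bu^*,\bu^{\mathrm{or}})$, then drop the two nonnegative terms $F_{\sigma^*}(\bu^{\mathrm{or}})\geq 0$ and $\langle \nabla F_{\sigma^*}(\bu^{\mathrm{or}}),\bu^*-\bu^{\mathrm{or}}\rangle \geq 0$ (the latter from the variational inequality \cref{eq:gradFzeor_xor}). One small addition you make that the paper glosses over is the explicit check that $\cX$ must be convex in order to invoke \cref{prop:implications_BRSC}, together with the observation that the canonical choice \cref{eq:compactX} is indeed convex (sublevel set of the convex $F_{\sigma^*}$ intersected with a coordinate subspace and $\cC^N$); this is a fair point of rigor, since \cref{ass:BRSCor} as stated does not literally require convexity of $\cX$ even though \cref{prop:implications_BRSC} does.
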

\begin{proof}
Using~\cref{prop:implications_BRSC}~\cref{eq:brsc_implies_con_ineq} with $\bu^\mathrm{or}$ and $\bu^*$, we get that
$$
F_{\sigma^*}(\bu^*) - F_{\sigma^*}(\bu^\mathrm{or}) \geq \left< \nabla F_{\sigma^*}(\bu^\mathrm{or}),\bu^* - \bu^\mathrm{or}\right> + C_K D_{\Psi_{\sigma^*}}(\bu^*,\bu^\mathrm{or}).
$$
Then, since $0 \leq  F_{\sigma^*}(\bu^\mathrm{or}) \leq F_{\sigma^*}(\bu^*)$ and $\left< \nabla F_{\sigma^*} (\bu^\mathrm{or}), \bu^* - \bu^\mathrm{or}\right> \geq \mathbf{0}$ by \cref{eq:gradFzeor_xor}, we get 
$$
D_{\Psi_{\sigma^*}}(\bu^*,\bu^\mathrm{or})\leq \frac{F_{\sigma^*}(\bu^*)}{C_K},
$$
showing that $\bu^\mathrm{or}\in\mathfrak{S}$, as desired.
\end{proof}
In~\Cref{sec:Psil2}, we will see that, for the choice $\Psi=(1/2)\|\cdot\|_2^2$, this set reduces to an Euclidean ball (\cref{propo:safe_region-l2}). We are now ready to provide sufficient conditions under which $\bx^{\mathrm{or}}$ is a local minimizer of $J_\Psi$.

\begin{theorem}\label{thm:x'locmin} Let~\cref{ass:BRSCor} be satisfied and let $S \subseteq \cC^{k^*}$ be a safe oracle region. Define  $\Omega:=\bigcup_{j=1}^{k^*} \Omega_j$ where, for every $j=1,\ldots,k^*$,
$$
\Omega_j:=\left\{\bu\in\cC^{k^*} \mid u_j\in [-\alpha_{\sigma^*[j]},\alpha_{\sigma^*[j]}] \right\}.
$$
Then, under the conditions,
\begin{align}
  &   S\cap\Omega=\emptyset, \label{eq:onsup} \\
  &     \|\ba_i\|_2\sqrt{2\tilde L F(\bx^*)} \leq \psi_i'(\alpha_i) - \psi_i'(0),  \text{ for all } i\in{\sigma^*}^c,  \label{eq:offsup}
\end{align}
for some $\tilde{L} \geq L$ (with $\tilde{L} = L$ valid when $\dom G_\by = \R^M$), we have that $\bx^\mathrm{or}$ is a local minimizer of $J_\Psi$ and $\sigma(\bx^\mathrm{or})=\sigma^*$. 
\end{theorem}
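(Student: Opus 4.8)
The overall strategy is to show that $\bx^{\mathrm{or}}$ is a local minimizer of $J_\Psi$ by establishing that, locally around $\bx^{\mathrm{or}}$, the value of $J_\Psi$ cannot decrease, treating separately the directions on the support $\sigma^*$ and those outside it. The crucial structural observation is that $\sigma(\bx^{\mathrm{or}}) = \sigma^*$ must be shown first: this is exactly what \cref{eq:onsup} buys us, since $\bu^{\mathrm{or}} \in S$ (as $S$ is a safe oracle region) and $S \cap \Omega = \emptyset$ forces every coordinate of $\bu^{\mathrm{or}}$ to satisfy $|u^{\mathrm{or}}_j| > \alpha_{\sigma^*[j]}$, hence $u^{\mathrm{or}}_j \neq 0$ for all $j$, giving $\sigma(\bx^{\mathrm{or}}) = \sigma^*$. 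In particular each coordinate of $\bx^{\mathrm{or}}$ on $\sigma^*$ lies strictly in the region $|x_i| > \alpha_i$ where, from the explicit formula, $\beta_{\psi_i}(x_i) = \lambda_0$ is \emph{locally constant} and $\partial h_i(x_i) = \{\psi_i'(x_i)\}$ is a singleton.

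Next I would localize. Pick a neighborhood $U$ of $\bx^{\mathrm{or}}$ small enough that (i) for $i \in \sigma^*$, every point of $U$ still has $|x_i| > \alpha_i$, so that $B_\Psi$ restricted to the $\sigma^*$-coordinates is the constant $k^*\lambda_0$ on $U$; and (ii) for $i \in {\sigma^*}^c$, the $i$-th coordinate stays in $[-\alpha_i,\alpha_i]$. On such $U$ we can write, for $\bx \in U$ with decomposition $\bx = Z_{\sigma^*}(\bu) + Z_{{\sigma^*}^c}(\mathbf{v})$,
\[
J_\Psi(\bx) = G_{\by}\big(\bA_{\sigma^*}\bu + \bA_{{\sigma^*}^c}\mathbf{v}\big) + k^*\lambda_0 + \sum_{i \in {\sigma^*}^c} \beta_{\psi_i}(v_i).
\]
The on-support part: because $\bu^{\mathrm{or}}$ minimizes $F_{\sigma^*}$, and because $\bu^{\mathrm{or}} \in \mathrm{int}(\cC^{k^*})$ (all coordinates nonzero with the right sign by \cref{ass3psi}, so interior), the first-order condition \cref{eq:zerogradientcond} gives $\nabla F_{\sigma^*}(\bu^{\mathrm{or}}) = \boldsymbol 0$; combined with convexity of $F_{\sigma^*}$ this makes $\bu \mapsto F_{\sigma^*}(\bu) = G_\by(\bA_{\sigma^*}\bu)$ take its global minimum at $\bu^{\mathrm{or}}$, so the on-support variation is $\geq 0$ and the constant $k^*\lambda_0$ contributes nothing. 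The off-support part is where \cref{eq:offsup} enters: I would show that perturbing a previously-zero coordinate $v_i$ away from $0$ increases $J_\Psi$. The penalty increment is $\beta_{\psi_i}(v_i) - \beta_{\psi_i}(0) = \psi_i(0) - \psi_i(v_i) + \mathrm{sign}(v_i)\psi_i'(\alpha_i)v_i$, which for small $v_i$ behaves like $(\psi_i'(\alpha_i) - \psi_i'(0))|v_i| + o(|v_i|)$ — a positive term linear in $|v_i|$ with slope $\psi_i'(\alpha_i) - \psi_i'(0) > 0$ (using \cref{ass1psi}: $\psi_i'$ strictly increasing). Against this, the loss increment $G_\by(\bA\bx) - G_\by(\bA\bx^{\mathrm{or}})$ can be controlled using the $\tilde L$-Lipschitz gradient of $G_\by$ (Assumption~\cref{ass:FID}\cref{ass:fid2}, with the $\tilde L \geq L$ slack needed when $\dom G_\by \subsetneq \R^M$ to accommodate Assumption~\cref{ass:FID}\cref{ass:fid4}); a descent-lemma-type estimate bounds the decrease by something like $\sqrt{2\tilde L F(\bx^*)}\,\|\ba_i\|_2\,|v_i| + o(|v_i|)$ (the factor $\sqrt{2\tilde L F(\bx^*)}$ coming from bounding $\|\nabla G_\by(\bA\bx^{\mathrm{or}})\|$ via $F(\bx^{\mathrm{or}}) \leq F(\bx^*)$ and a standard inequality relating gradient norm to function value for $L$-smooth nonnegative functions). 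Condition \cref{eq:offsup} says precisely that the penalty slope dominates the loss slope coordinatewise, so the net first-order effect of turning on any off-support coordinate is nonnegative; the higher-order terms are harmless in a small enough neighborhood.

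Assembling: on $U$, $J_\Psi(\bx) - J_\Psi(\bx^{\mathrm{or}}) \geq [\text{on-support term} \geq 0] + \sum_{i \in {\sigma^*}^c}[\text{off-support term} \geq 0] \geq 0$, so $\bx^{\mathrm{or}}$ is a local minimizer, and we already recorded $\sigma(\bx^{\mathrm{or}}) = \sigma^*$. The main obstacle I anticipate is making the off-support estimate fully rigorous when the two perturbations (on-support $\bu \to \bu^{\mathrm{or}} + \Delta\bu$ and off-support $\mathbf{v} = \mathbf{0} \to \Delta\mathbf{v}$) interact inside the nonlinear $G_\by$: one cannot simply add the two one-dimensional analyses. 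The clean way around this is to use the joint convexity of $G_\by(\bA \cdot)$ — write $G_\by(\bA\bx) - G_\by(\bA\bx^{\mathrm{or}}) \geq \langle \nabla G_\by(\bA\bx^{\mathrm{or}}), \bA(\bx - \bx^{\mathrm{or}})\rangle$, split this inner product into its $\sigma^*$-part (which is $\langle \nabla F_{\sigma^*}(\bu^{\mathrm{or}}), \Delta\bu\rangle = 0$) and its ${\sigma^*}^c$-part ($\sum_{i\in{\sigma^*}^c}\langle \nabla G_\by(\bA\bx^{\mathrm{or}}), \ba_i\rangle v_i \geq -\sum_i \|\nabla G_\by(\bA\bx^{\mathrm{or}})\|\|\ba_i\||v_i|$) — thereby decoupling the directions exactly, with no higher-order remainder at all, and the comparison with the penalty's lower bound $\beta_{\psi_i}(v_i) - \lambda_0 \cdot 0 \geq$ (a suitable concave-to-linear lower bound, valid since $\beta_{\psi_i}$ is concave on $[0,\alpha_i]$ under \cref{eq:CC}) closes the argument cleanly.
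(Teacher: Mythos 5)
You take a genuinely different route from the paper. The paper cites~\cite[Proposition~10]{Essafri2024}, which characterizes exactly when a local minimizer of $J_0$ (which $\bx^\mathrm{or}$ is, by~\cite[Proposition~2]{Essafri2024}) is preserved as a local minimizer of $J_\Psi$, and then simply verifies the two conditions of that characterization: condition~(i) via $S\cap\Omega=\emptyset$ and condition~(ii) via Cauchy--Schwarz and~\cref{lem:coseqdesclemm}. You instead attempt a from-scratch verification of local minimality. Your identification of the on/off-support roles of~\cref{eq:onsup} and~\cref{eq:offsup}, the observation that $\sigma(\bx^\mathrm{or})=\sigma^*$, and the Cauchy--Schwarz/descent-lemma estimate all agree with the paper.

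However, the off-support step has a genuine gap. After decoupling via convexity of $G_\by(\bA\cdot)$, you are reduced to showing $\beta_{\psi_i}(v_i) + \langle\nabla G_\by(\bA\bx^\mathrm{or}),\ba_i\rangle v_i \geq 0$ for each $i\in{\sigma^*}^c$ and $v_i$ near $0$. Setting $c_i := -\langle\nabla G_\by(\bA\bx^\mathrm{or}),\ba_i\rangle$, the Cauchy--Schwarz bound together with~\cref{eq:offsup} gives only $|c_i|\leq\psi_i'(\alpha_i)-\psi_i'(0)$, so you would need $\beta_{\psi_i}(v)\geq(\psi_i'(\alpha_i)-\psi_i'(0))|v|$. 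This inequality is false: for $v\in(0,\alpha_i]$,
\[
\beta_{\psi_i}(v)-(\psi_i'(\alpha_i)-\psi_i'(0))v \;=\; \psi_i(0)-\psi_i(v)+\psi_i'(0)v \;=\; -d_{\psi_i}(v,0) \;<\; 0
\]
by strict convexity of $\psi_i$. Indeed, $\beta_{\psi_i}$ is concave on $[0,\alpha_i]$ with right-derivative $\psi_i'(\alpha_i)-\psi_i'(0)$ at $0$, so the line of that slope is a tangent \emph{from above}, i.e., an upper bound, not the lower bound you invoke. The actual concave-to-linear \emph{lower} bound (the chord) has slope $\lambda_0/\alpha_i$, which is \emph{strictly smaller} than $\psi_i'(\alpha_i)-\psi_i'(0)$ (again by strict convexity of $\psi_i$), so it cannot absorb the worst admissible $|c_i|$. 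Linearizing $G_\by(\bA\cdot)$ discards exactly the convex curvature that would be needed to offset the concave curvature of $\beta_{\psi_i}$; the claim that ``the higher-order terms are harmless'' is precisely what fails, since when $|c_i|=\psi_i'(\alpha_i)-\psi_i'(0)$ the first-order terms cancel and the leftover $-d_{\psi_i}(v,0)$ is strictly negative. The cited Proposition~10 of~\cite{Essafri2024} avoids this by exploiting that $\bx^\mathrm{or}$ is already a local minimizer of $J_0$ — a structural fact your argument never uses.
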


We present below a sketch of the proof, with full details deferred to  ~\Cref{sec:proofs33}.

\begin{proof}[Sketch of the proof]
From \cite[Proposition 10]{Essafri2024}, we get that a local minimizer $\bx$ of $J_0$ is preserved by $J_\Psi$ if and only if
\begin{enumerate}[label=(\roman*),leftmargin=*]
\item \label{cond:suppcomps} for any $i\in \sigma(\bx)$, $|x_i|>\alpha_i$,
\item \label{cond:offsuppcomps} for any $i\in{\sigma}(\bx)^c$, $-\langle \ba_i, \nabla G_\by(\bA\bx)\rangle\in [\ell_i^-, \ell_i^+]$.
\end{enumerate}
where $\ell_i^+:=\psi'_i(\alpha_i)-\psi'_i(0)$ and $\ell_i^-:=-\psi'_i(\alpha_i)-\psi'_i(0)$ when $\cC = \R$ or $\ell_i^-=-\infty$ when $\cC=\R_{\geq 0}$.  Hence, a simple geometrical sufficient condition for $\bx^{\mathrm{or}}$ to satisfy~\cref{cond:suppcomps} above is that $S \cap \Omega = \emptyset$. Regarding~\cref{cond:offsuppcomps}, the proof consists in first showing that the term $|\langle \ba_i, \nabla G_\by(\bA\bx^{\mathrm{or}})\rangle|$ is upper bounded by $\|\ba_i\|_2(2\tilde L F(\bx^*))^{1/2}$, and combine this with~\cref{eq:offsup} to ensure that~\cref{cond:offsuppcomps} holds true.
\end{proof}

Next, notice that, while Condition~\cref{eq:offsup} is fairly explicit, Condition~\cref{eq:onsup} is more abstract and must be further specified on a case-by-case basis, depending on the choice of $G_\by$, $\Psi$, and the safe oracle region $S$ (cf.~\Cref{sec:applications}). Notice also that the above result has been presented in terms of a general safe oracle region $S$, rather than with the valid example provided in~\cref{propo:safe_region} since, clearly, other examples satisfying~\cref{def:safe_oracle_region} may be found. Finally, it is worth noting that the smaller the safe oracle region, the better the bound associated to Condition~\cref{eq:onsup}. 

\subsection{\texorpdfstring{The oracle solution is the unique sparsest global minimizer of $J_\Psi$}{unique sparse global mininizer of JPsi}} 

The present section will be devoted to analyze whether the oracle solution is a global minimizer of $J_\Psi$ (and so of $J_0$), and is also isolated in terms of sparsity for $J_\Psi$ in the sense of~\cref{thm:sparsest}. We present this result in the following theorem.

\begin{theorem}\label{thm:oracuniquesparsest}
Let~\cref{ass:BRSCor} be satisfied and let $S \subseteq \cC^{k^*}$ be a safe oracle region. Define $\Omega$ as in~\cref{thm:x'locmin} and  $\tilde{\Omega}:=\bigcup_{j=1}^{k^*} \tilde{\Omega}_{j}$  where, for every $j=1,\ldots,k^*$,    
$$
\tilde{\Omega}_j:= \left\{\bu\in\cC^{k^*} \mid \psi'_{\sigma^*[j]}(u_{j}) \in\left[-\rho_j,\rho_j\right]\right\},
$$
with $\rho_j:=(\psi_{\sigma^*[j]}'(\alpha_{\sigma^*[j]})-\psi'_{\sigma^*[j]}(0))/C_K+\psi_{\sigma^*[j]}'(0)$.

Then, under the conditions~\cref{eq:onsup} and~\cref{eq:offsup} of~\cref{thm:x'locmin} (when $C_K >1$), or the following two (stronger) conditions (when $C_K \leq 1$)
    \begin{align}
       & S\cap \tilde{\Omega}=\emptyset, \label{ass1genthm} \\
       & \|\ba_i\|_2\sqrt{2\tilde L F(\bx^*)} < C_K(\psi'_i(\alpha_i) - \psi'_i(0)), \text{ for all } i\in{\sigma^*}^c,   \label{ass2genthm}
    \end{align}
   for some $\tilde{L} \geq L$ (with $\tilde{L} = L$ valid when $\dom G_\by = \R^M$), in addition to,
   \begin{equation}\label{ass3genthm}
       \lambda_0>\frac{F(\bx^*)}{1+K-2k^*},
   \end{equation}
we have that $\bx^\mathrm{or}$ is the unique global minimizer of $J_\Psi$ (and so of $J_0$) with $\sigma(\bx^\mathrm{or})=\sigma^*$, and any other critical point $\bx'$ of $J_\Psi$, $\bx'\neq\bx^\mathrm{or}$, satisfies $\|\bx'-\bx^\mathrm{or}\|_0>K$.
\end{theorem}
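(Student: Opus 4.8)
The strategy is to apply the machinery of Section~\ref{sec:sec4} with the specific critical point taken to be $\bx^{\mathrm{or}}$, combining \cref{thm:x'locmin} (which gives that $\bx^{\mathrm{or}}$ is a local minimizer of $J_\Psi$ with $\sigma(\bx^{\mathrm{or}})=\sigma^*$) with \cref{thm:sparsest} and \cref{thm:uniqueglob}. First I would record the consequences of \cref{thm:x'locmin}: under \cref{eq:onsup}--\cref{eq:offsup} (which are implied by \cref{ass1genthm}--\cref{ass2genthm} since $\tilde\Omega_j\supseteq\Omega_j$ and the inequality in \cref{ass2genthm} is strict, hence stronger than \cref{eq:offsup}), $\bx^{\mathrm{or}}$ is a local minimizer of $J_\Psi$ and in particular a critical point with $\|\bx^{\mathrm{or}}\|_0=k^*$. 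Because $K\geq 2k^*$ by \cref{ass:BRSCor}, we have $\|\bx^{\mathrm{or}}\|_0\leq K/2$, so once we show $\bx^{\mathrm{or}}$ satisfies the hypotheses of \cref{thm:sparsest}, the triangle-inequality argument given after that theorem shows any other critical point $\bx'$ has $\|\bx'-\bx^{\mathrm{or}}\|_0>K$.

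The core step is therefore to verify condition~\cref{eq:zinotin} of \cref{thm:sparsest} at $\bx=\bx^{\mathrm{or}}$, i.e.\ that for every $i=1,\ldots,N$,
$$
|z_i|\notin\left[C_K(\psi_i'(\alpha_i)-\psi_i'(0))+\psi_i'(0),\ \frac{\psi_i'(\alpha_i)-\psi_i'(0)}{C_K}+\psi_i'(0)\right],
$$
where $\bz=\nabla\Psi(\bx^{\mathrm{or}})-\bA^T\nabla G_{\by}(\bA\bx^{\mathrm{or}})$. I would split into the on-support and off-support indices. For $i\in\sigma^*$, since $\bx^{\mathrm{or}}$ is a critical point with $z_i\in\partial h_i(x_i^{\mathrm{or}})$ and $|x_i^{\mathrm{or}}|>\alpha_i$ (by \cref{cond:suppcomps} in the proof of \cref{thm:x'locmin}), we have $z_i=\psi_i'(x_i^{\mathrm{or}})$; using \cref{eq:zerogradientcond} this becomes $z_i=\psi_i'(u^{\mathrm{or}}_j)$ for the appropriate $j$, and the hypothesis \cref{ass1genthm} ($S\cap\tilde\Omega=\emptyset$) together with $\bu^{\mathrm{or}}\in S$ (safe oracle region) forces $\psi_{\sigma^*[j]}'(u_j^{\mathrm{or}})\notin[-\rho_j,\rho_j]$, which is exactly the complement of the interval above written in terms of $\rho_j$; when $C_K>1$ the interval is empty so nothing is needed there. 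For $i\in{\sigma^*}^c={\sigma^{\mathrm{or}}}^c$, we have $x_i^{\mathrm{or}}=0$, so $z_i=\psi_i'(0)-\langle\ba_i,\nabla G_{\by}(\bA\bx^{\mathrm{or}})\rangle$, whence $|z_i-\psi_i'(0)|=|\langle\ba_i,\nabla G_{\by}(\bA\bx^{\mathrm{or}})\rangle|\leq\|\ba_i\|_2\sqrt{2\tilde LF(\bx^*)}$ by the same bound as in the proof of \cref{thm:x'locmin}; combining with \cref{ass2genthm} gives $|z_i-\psi_i'(0)|<C_K(\psi_i'(\alpha_i)-\psi_i'(0))$, which places $|z_i|$ strictly below the left endpoint of the interval in \cref{eq:zinotin}. (When $C_K>1$, \cref{eq:offsup} together with emptiness of the interval again suffices.) This establishes \cref{eq:zinotin} at $\bx^{\mathrm{or}}$, so \cref{thm:sparsest} applies.

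Finally, to upgrade from ``isolated sparsest critical point'' to ``unique global minimizer,'' I would invoke \cref{thm:uniqueglob} with $\bx=\bx^{\mathrm{or}}$: all its hypotheses are now in hand — \cref{eq:brsc} holds by \cref{ass:BRSCor}, \cref{eq:zinotin} was just verified, \cref{eq:CC} is assumed throughout Section~\ref{sec:oracsol}, and the regularization condition \cref{eq:uniqglobcond} with $\|\bx^{\mathrm{or}}\|_0=k^*$ reads $\lambda_0>F(\bx^{\mathrm{or}})/(1+K-2k^*)$, which follows from \cref{ass3genthm} because $F(\bx^{\mathrm{or}})=F_{\sigma^*}(\bu^{\mathrm{or}})\leq F_{\sigma^*}(\bu^*)=F(\bx^*)$ by optimality of $\bu^{\mathrm{or}}$. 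Hence $\bx^{\mathrm{or}}$ is the unique global minimizer of $J_\Psi$ (and of $J_0$ by \cref{thm:exactrel}), $\sigma(\bx^{\mathrm{or}})=\sigma^*$, and any other critical point $\bx'$ satisfies $\|\bx'-\bx^{\mathrm{or}}\|_0>K$. The main obstacle I anticipate is the bookkeeping around the two regimes $C_K>1$ versus $C_K\leq 1$ — making sure the weaker conditions \cref{eq:onsup}--\cref{eq:offsup} genuinely suffice when the interval in \cref{eq:zinotin} degenerates to $\emptyset$, and that the strict inequalities in \cref{ass1genthm}--\cref{ass2genthm} are what is needed to land strictly outside the (now nonempty) interval when $C_K\leq 1$ — together with carefully re-deriving the bound $|\langle\ba_i,\nabla G_{\by}(\bA\bx^{\mathrm{or}})\rangle|\leq\|\ba_i\|_2\sqrt{2\tilde LF(\bx^*)}$, which is the one genuinely analytic ingredient and relies on the $\tilde L$-smoothness/descent-lemma estimate together with $F(\bx^{\mathrm{or}})\leq F(\bx^*)$.
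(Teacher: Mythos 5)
Your proposal is correct and follows essentially the same route as the paper: establish local minimality of $\bx^{\mathrm{or}}$ via \cref{thm:x'locmin}, verify condition~\cref{eq:zinotin} at $\bx^{\mathrm{or}}$ by splitting into the on-support indices (via \cref{ass1genthm}, using \cref{eq:zerogradientcond} to reduce $z_i$ to $\psi_i'(x_i^{\mathrm{or}})$) and off-support indices (via \cref{ass2genthm} and the gradient bound from \cref{lem:coseqdesclemm}), and then upgrade through \cref{thm:uniqueglob} using $F(\bx^{\mathrm{or}})\leq F(\bx^*)$. The only stylistic difference is that you justify $S\cap\tilde\Omega=\emptyset\Rightarrow S\cap\Omega=\emptyset$ by the set inclusion $\Omega_j\subseteq\tilde\Omega_j$ (valid when $C_K\leq 1$, which is the only regime where it is invoked) whereas the paper argues the same implication by contradiction through the monotonicity of $\psi_i'$; both are equivalent, and your phrasing should just make explicit that this inclusion hinges on $C_K\leq 1$.
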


As we did for~\cref{thm:x'locmin}, we now provide a sketch of the proof, and we dedicate~\Cref{proof:oracuniquesparsest} to give further details.

\begin{proof}[Sketch of the proof] The result follows by combining~\cref{thm:uniqueglob} and~\cref{thm:x'locmin}. Specifically, we first show that we are under the conditions of~\cref{thm:x'locmin}, so that $\bx^{\mathrm{or}}$ is a local minimizer (and so a critical point) of $J_\Psi$ with $\sigma(\bx^\mathrm{or})=\sigma^*$, and second, we show that it is possible to apply~\cref{thm:uniqueglob} to $\bx^{\mathrm{or}}$, proving that $\bx^{\mathrm{or}}$ is the unique global minimizer of $J_\Psi$ (and so of $J_0$), and is also isolated in terms of sparsity. Notice that the required BRSC property at $\bx^{\mathrm{or}}$ is satisfied in this case thanks to~\cref{ass:BRSCor}. 

To do so, and as already indicated in the result, we will need to distinguish two cases: when $C_K>1$, and when $C_K\leq 1$. The first case, $C_K>1$, is trivial, as here we precisely require the conditions~\cref{eq:onsup} and~\cref{eq:offsup} to hold true, and so,~\cref{thm:x'locmin} directly follows. Additionally, Condition~\cref{eq:zinotin}  reduces to $|z_i| \notin \emptyset$ and is thus trivially satisfied. Finally, observing that
\begin{equation}
    \lambda_0 > \frac{F(\bx^*)}{1+K-2k^*} \geq \frac{F(\bx^{\mathrm{or}})}{1+K-2k^*}, 
\end{equation}
where in the last inequality we simply used the definition of $\bx^\mathrm{or}$, completes the requirements of~\cref{thm:uniqueglob}, and proves this first part of the result. On the other hand, in the more involved case $C_K\leq 1$, we will prove that conditions~\cref{ass1genthm} and~\cref{ass2genthm} imply, on the one hand, those of~\cref{thm:x'locmin} (i.e., conditions~\cref{eq:onsup} and~\cref{eq:offsup}) and, on the other hand, Condition~\cref{eq:zinotin}. By repeating the same argument as in the case above, we can conclude.
\end{proof}

Similarly to~\cref{thm:x'locmin}, the conditions stated in~\cref{thm:oracuniquesparsest} must be further simplified on a case-by-case basis, depending on the specific choice of $G_\by$, $\Psi$, and~$S$. This is the purpose of the next section, where we particularize~\cref{thm:oracuniquesparsest} to different data fidelity terms, namely the least-squares loss $G_\by^{\mathrm{LS}}$ and the generalized Kullback–Leibler divergence $G_\by^{\mathrm{KL}}$. In particular, we anticipate that, in these specific cases, the conditions of~\cref{thm:oracuniquesparsest} will take the following form: 
\begin{equation}\label{eq:sqrtlambinterval}
\lambda_0 \in \left(\underline{\Lambda}(\by,C_K,L), \bar{\Lambda}(\bx^*,\by,C_K,L)\right).
\end{equation}
While~\cref{ass3genthm} involves $\lambda_0$ explicitly, Conditions~\cref{ass1genthm} and~\cref{ass2genthm} (or~\cref{eq:onsup} and~\cref{eq:offsup}) depend on $\lambda_0$ through the $\alpha_i$'s ($d_{\psi_i}(0,\alpha_i)=\lambda_0$). 

The above form will allow us to analyze the asymptotic behavior of both $\underline\Lambda$ and $\bar\Lambda$ with respect to both the noise level and the amplitudes of the nonzero components of the true solution $\bx^*$. Before doing so, we discuss the noiseless case in the following remark, which concludes the section.

\begin{remark}[On the noiseless setting] Let us consider the extreme case in which no noise is present in the observations; i.e., when $\by=\by^*$. Additionally, we set $S=\mathfrak{S}$, the safe oracle region given in~\cref{propo:safe_region}. Now, observe that Conditions~\cref{ass1genthm},~\cref{ass2genthm} and~\cref{ass3genthm} depend on $F(\bx^*)$, either directly as in~\cref{ass2genthm} and~\cref{ass3genthm}, or indirectly as in~\cref{ass1genthm}, where the term $F_{\sigma^*}(\bu^*)=F(\bx^*)$ is involved in the definition of $\mathfrak{S}$. Hence, in order to properly characterize the noiseless regime, we must specify how $F(\bx^*)$ behaves as the noise level decreases. In particular, $F$ should be chosen so that its value reflects the mismatch between the predicted and true observations, and naturally vanishes when the two coincide. Thus, in the noiseless case we should have $F(\bx^*)= G_{\by^*}(\bA\bx^*)=0$. Notice that this is the case for both of the data fidelity terms in~\cref{ex:LS} and~\cref{ex:KL} (in the latter, observe that $G^{\mathrm{KL}}_{\by^*}(\bA\bx^*)= \sum_{j=1}^M [\bA\bx^*]_j+b_j+y_j^*\log(y_j^*/([\bA\bx^*]_j+b_j)) -y_j^*=0$ because $\by^*=\bA\bx^*+\mathbf{b}$) and, actually, for any data term that can be written as the Bregman divergence of a convex function. Taking this into account, we have that Condition~\cref{ass1genthm} writes $\{\bu^*\}\cap\tilde\Omega=\emptyset$. Next, Condition~\cref{ass2genthm} reduces to $0\leq \psi_i'(\alpha_i) - \psi_i'(0)$, which is always true from~\cref{ass:generating_func_psi}~\ref{ass3psi}. Finally, Condition~\cref{ass3genthm} reads $\lambda_0>0$. Combining everything, we conclude that, in the noiseless regime, the interval in~\cref{eq:sqrtlambinterval} becomes $(0,\bar{\Lambda})$, where $\bar{\Lambda}$ has to be deduced from $\{\bu^*\}\cap\tilde\Omega=\emptyset$.
\end{remark}

\section{Applications}\label{sec:applications}

We dedicate this section to further explore the implications of our theoretical results to both~\cref{ex:LS} and~\cref{ex:KL}. To do so, we must proceed in two steps. First, given a data fidelity term, we need to determine the Bregman function $\Psi$ such that~\eqref{eq:brsc} holds. Second, we must particularize our general bounds to the specific sparse optimization problem at hand. As we show in both~\Cref{sec:Psil2} and~\Cref{sec:exp_KL}, both tasks require significant  analytical effort. 

Furthermore, while we restrict our specialization to these two examples, our theoretical framework is designed to accommodate a much broader class of sparse optimization problems. More generally, any data fidelity term that is the Bregman divergence of a strictly convex function naturally falls within the setting of~\cref{ass:FID}. For example, the data fidelity terms described in~\cref{ex:LS} and~\cref{ex:KL} are particular instances ($\beta=2$ and $\beta=1$, respectively) of the broader class of $\beta$-divergences~\cite{Basu1998,Cichocki2010}. We expect that our analysis extends, for instance, to the critical value $\beta=1.5$, a case that proves useful in applications such as hyperspectral imaging~\cite{FD2015}. Similarly, robust formulations like the Cauchy regression problem, commonly deployed to handle heavy-tailed impulsive noise or extreme outliers~\cite{Carr2013,Guo2023}, should also fit within our paradigm. In conclusion, our theory can be applied to a larger variety of problems beyond those presented here, albeit requiring some non trivial work for each specific case.

As in the above sections, we set $F:=G_\by(\bA\cdot)$. First, in~\Cref{sec:Psil2} we set the Bregman-generating function $\Psi$ to be the squared $\ell_2$-norm and particularize~\cref{thm:oracuniquesparsest} in this setting. Additionally, we show that, if $F=G^{\mathrm{LS}}_\by(\bA\cdot)$, our analysis brings improved results with respect to existing literature~\cite{Carlsson2020}. Second, in~\Cref{sec:exp_KL}, we set $F=G^{\mathrm{KL}}_\by(\bA\cdot)$ and~$\Psi$ to a suitable Kullback--Leibler Bregman-generating function. From~\cref{propo:positiveKL}, it is rather direct to see that~\cref{ass:BRSCor} holds under an LRIP condition on the matrix $\bA$ and, consequently, a corresponding version of~\cref{thm:oracuniquesparsest} can be provided. 

\subsection{\texorpdfstring{Quadratic generating function ($\Psi=\ell_2$)}{Psi=l2 and Lipschitz gradient data terms}}\label{sec:Psil2}

In this section, we aim to rewrite~\cref{thm:oracuniquesparsest} for the particular choice 
\begin{equation}\label{eq:Psi_l2}
    \Psi(\bx)=\frac{\gamma}{2} \|\bx\|_2^2
\end{equation}
where the scalar $\gamma> 0$ is set according to the concavity condition~\cref{eq:CC}; that is
\begin{equation}\label{eq:CC_psi_l2}
    \gamma > \max_{i=1,\ldots,N} \sum_{j=1}^M a_{ji}^2\sup_{w \in \cC} g_{y_j}''(w).
\end{equation}
Note that we consider a scalar $\gamma$ to simplify the presentation. Yet, all the results of this section can be easily adapted if we instead consider a vector $\boldsymbol{\gamma} \in \R_{>0}^N$, weighting differently each component of $\bx$ as in~\cite{Essafri2024}. For further convenience, a $\Psi$ chosen as in~\cref{eq:Psi_l2} with $\gamma$ according to \cref{eq:CC_psi_l2} will be denoted as $\Psi=\ell_2$.

We next provide some insights about~\cref{ass:BRSCor}. Notice that, in this case ($\Psi=\ell_2$), it turns into requiring $F$ to satisfy~\cref{eq:RSC} within a certain compact set $\cX$ containing both $\bx^{\mathrm{or}}$ and $\bx^*$. Hence, from~\cref{propo:BRSC_with_psiL2}, we get that~\cref{eq:LRIP} is a sufficient condition for~\cref{eq:RSC} to hold true, whenever $G_\by$ is $\nu-$strongly convex, with constant $\mu_K=\nu(1-\delta_K^-)/\gamma$. In that case~\cref{ass:BRSCor} holds true with $C_K = \mu_K$. Next, we explicit the safe oracle region described in~\cref{propo:safe_region} to this context. Interestingly, it turns to be an Euclidean ball.

\begin{proposition}\label{propo:safe_region-l2}
    For $\Psi=\ell_2$, the safe oracle region $\mathfrak{S}$ given in~\cref{propo:safe_region} reduces to the Euclidean ball
    $$
    \mathfrak{S}= B \left(\bu^* , \sqrt{\frac{2  F_{\sigma^*}(\bu^*)}{\gamma C_K }} \right).
    $$
\end{proposition}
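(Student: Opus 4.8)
The plan is to specialize the abstract safe oracle region $\mathfrak{S}$ from~\cref{propo:safe_region} to the case $\Psi=\ell_2$ by making the Bregman divergence $D_{\Psi_{\sigma^*}}$ completely explicit. Since $\Psi(\bx)=(\gamma/2)\|\bx\|_2^2$ is separable with $\psi_i(t)=(\gamma/2)t^2$, each one-dimensional Bregman divergence is $d_{\psi_i}(u,u')=\tfrac{\gamma}{2}u^2-\tfrac{\gamma}{2}u'^2-\gamma u'(u-u')=\tfrac{\gamma}{2}(u-u')^2$. Summing over the coordinates indexed by $\sigma^*$ gives $D_{\Psi_{\sigma^*}}(\bu^*,\bu)=\tfrac{\gamma}{2}\|\bu^*-\bu\|_2^2$.

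Next I would substitute this expression into the definition~\cref{eq:setS2} of $\mathfrak{S}$. The defining inequality $F_{\sigma^*}(\bu^*)\geq C_K D_{\Psi_{\sigma^*}}(\bu^*,\bu)$ becomes $F_{\sigma^*}(\bu^*)\geq \tfrac{\gamma C_K}{2}\|\bu^*-\bu\|_2^2$, i.e.
$$
\|\bu^*-\bu\|_2^2 \leq \frac{2 F_{\sigma^*}(\bu^*)}{\gamma C_K},
$$
which is precisely the statement that $\bu$ lies in the Euclidean ball centered at $\bu^*$ with radius $\sqrt{2 F_{\sigma^*}(\bu^*)/(\gamma C_K)}$. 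This identifies $\mathfrak{S}$ with $B(\bu^*,\sqrt{2 F_{\sigma^*}(\bu^*)/(\gamma C_K)})$, as claimed. One should note that $C_K>0$ and $\gamma>0$, so the radius is well-defined, and $F_{\sigma^*}(\bu^*)\geq 0$ ensures the ball is nonempty.

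There is essentially no obstacle here: the proof is a one-line computation of a quadratic Bregman divergence followed by a direct substitution. The only point deserving a word of care is to confirm that the constant $C_K$ appearing in $\mathfrak{S}$ is indeed the BRSC constant of $F$ with respect to this particular $\Psi=\ell_2$ (so that, by the discussion following~\cref{propo:BRSC_with_psiL2}, it equals $\mu_K=\nu(1-\delta_K^-)/\gamma$ when $G_\by$ is $\nu$-strongly convex), but this is consistent with~\cref{ass:BRSCor} and requires no additional argument.
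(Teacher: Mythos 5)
Your proof is correct and follows essentially the same route as the paper: compute the quadratic Bregman divergence $D_{\Psi_{\sigma^*}}(\bu^*,\bu)=\tfrac{\gamma}{2}\|\bu^*-\bu\|_2^2$ and substitute into the defining inequality of $\mathfrak{S}$. You are in fact slightly more explicit than the paper, since you spell out the one-dimensional Bregman computation and note both implications, whereas the paper states only the forward direction (which, being an equivalence of inequalities, already gives the set equality).
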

\begin{proof}
If $\bu\in\mathfrak{S}$, we have that 
$$
    F_{\sigma^*}(\bu^*) \geq \frac{\gamma C_K}{2} \|\bu - \bu^*\|_2^2 \; \Longrightarrow \; \bu \in B \left(\bu^* , \sqrt{\frac{2  F_{\sigma^*}(\bu^*)}{\gamma C_K}} \right)
$$
which completes the proof.
\end{proof}
We now present the corresponding version of~\cref{thm:oracuniquesparsest} when $\Psi = \ell_2$.

\begin{corollary}\label{coro:general_result_l2}
    Let $\Psi=\ell_2$,~\cref{ass:BRSCor} be satisfied, and consider the safe oracle region $\mathfrak{S}$ given in~\cref{propo:safe_region-l2}. Define
    \begin{align}
        \underline \Lambda := F(\bx^*) \max \left\lbrace \frac{\tilde L}{\gamma\min\{C_K^2,1\}}  \max_{i \in \sigma^{*c}} \| \ba_i\|_2^2  ,  \frac{1}{1+K-2k^*} \right\rbrace,
    \end{align}
     as well as
    \begin{align}\label{eq:coro:general_result_l2bar} 
        \bar{\Lambda} := \frac{\gamma }{2} \min\{C_K^2,1\} \left( \min_{i \in \sigma^*} |x_i^*| - \sqrt{\frac{2 F(\bx^*)}{\gamma C_K} } \right)^2.
    \end{align}
    Then, if $\lambda_0 \in (\underline \Lambda, \bar \Lambda)$, we have that $\bx^\mathrm{or}$ is the unique global minimizer of $J_\Psi$ (and so of $J_0$) with $\sigma(\bx^\mathrm{or})=\sigma^*$, and any other critical point $\bx'$ of $J_\Psi$, $\bx'\neq\bx^\mathrm{or}$, satisfies $\|\bx'-\bx^\mathrm{or}\|_0>K$.
\end{corollary}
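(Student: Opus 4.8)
The plan is to specialize Theorem~\ref{thm:oracuniquesparsest} to the case $\Psi=\ell_2$ by translating each of its abstract conditions into the concrete inequalities defining the interval $(\underline\Lambda,\bar\Lambda)$. First I would recall that, for $\psi_i(x)=(\gamma/2)x^2$, we have $\psi_i'(x)=\gamma x$, so $\psi_i'(0)=0$ and $\psi_i'(\alpha_i)=\gamma\alpha_i$; moreover the relation $d_{\psi_i}(0,\alpha_i)=\lambda_0$ becomes $(\gamma/2)\alpha_i^2=\lambda_0$, i.e. $\alpha_i=\sqrt{2\lambda_0/\gamma}$, which is independent of $i$. With these substitutions, the quantity $\psi_i'(\alpha_i)-\psi_i'(0)=\gamma\alpha_i=\sqrt{2\gamma\lambda_0}$, and the threshold $\rho_j$ in $\tilde\Omega_j$ becomes $\rho_j=\gamma\alpha_{\sigma^*[j]}/C_K=\sqrt{2\gamma\lambda_0}/C_K$, so that the condition $\psi'_{\sigma^*[j]}(u_j)\in[-\rho_j,\rho_j]$ reads $|u_j|\le\sqrt{2\lambda_0/\gamma}/C_K$.

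Next I would handle the two regimes separately, as in the statement of Theorem~\ref{thm:oracuniquesparsest}, but observe that both collapse into the single pair of conditions obtained by replacing $C_K$ with $\min\{C_K,1\}$. For the on-support condition: when $C_K>1$ we need $S\cap\Omega=\emptyset$, i.e. the ball $\mathfrak{S}=B(\bu^*,\sqrt{2F(\bx^*)/(\gamma C_K)})$ must avoid $\Omega=\bigcup_j\{\bu:|u_j|\le\alpha\}$; when $C_K\le1$ we instead need $S\cap\tilde\Omega=\emptyset$ with the larger set $\tilde\Omega=\bigcup_j\{\bu:|u_j|\le\alpha/C_K\}$. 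In either case, since $\mathfrak{S}$ is a Euclidean ball centered at $\bu^*$ with $\bu^*$ having all coordinates of magnitude at least $\min_{i\in\sigma^*}|x_i^*|$, disjointness from the slab $\{|u_j|\le r\}$ holds iff the ball radius plus $r$ is at most $\min_{i\in\sigma^*}|x_i^*|$; taking $r=\alpha/\min\{C_K,1\}=\sqrt{2\lambda_0/\gamma}/\min\{C_K,1\}$ and rearranging gives precisely $\sqrt{\lambda_0}\le \sqrt{\gamma/2}\,\min\{C_K,1\}(\min_{i\in\sigma^*}|x_i^*|-\sqrt{2F(\bx^*)/(\gamma C_K)})$, which is $\lambda_0<\bar\Lambda$ after squaring. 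For the off-support condition~\eqref{eq:offsup}/\eqref{ass2genthm}: the requirement $\|\ba_i\|_2\sqrt{2\tilde LF(\bx^*)}\le (\text{or }<)\; \min\{C_K,1\}\sqrt{2\gamma\lambda_0}$ rearranges to $\lambda_0\ge \tilde L F(\bx^*)\|\ba_i\|_2^2/(\gamma\min\{C_K^2,1\})$ for all $i\in\sigma^{*c}$, i.e. $\lambda_0\ge$ the first term in the max defining $\underline\Lambda$. Finally, condition~\eqref{ass3genthm}, $\lambda_0>F(\bx^*)/(1+K-2k^*)$, is exactly the second term in the max defining $\underline\Lambda$. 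Collecting these, the hypotheses of Theorem~\ref{thm:oracuniquesparsest} are met precisely when $\lambda_0\in(\underline\Lambda,\bar\Lambda)$, and the conclusion transfers verbatim.

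The main obstacle I anticipate is the bookkeeping around the strict-versus-nonstrict inequalities and the $C_K>1$ versus $C_K\le1$ dichotomy: one must check that using $\min\{C_K,1\}$ (equivalently $\min\{C_K^2,1\}$ in the squared forms) faithfully encodes both branches of Theorem~\ref{thm:oracuniquesparsest} without accidentally weakening or strengthening a condition, and that the ball-versus-slab disjointness argument is tight (i.e. that it is genuinely equivalent, not merely sufficient, to the stated radius inequality, so that the interval is as large as claimed). A secondary point of care is verifying that $\alpha_i$ is indeed $i$-independent here, so that $\min_{i\in\sigma^*}|x_i^*|$ and $\max_{i\in\sigma^{*c}}\|\ba_i\|_2$ are the only places where the index enters; this is what lets the union over $j$ in $\Omega$ (resp. $\tilde\Omega$) reduce to a single worst-case slab. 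Everything else is substitution and elementary algebra.
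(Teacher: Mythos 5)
Your proposal is correct and takes essentially the same route as the paper's proof: specialize \cref{thm:oracuniquesparsest} to $\Psi=\ell_2$, compute $\alpha_i=\sqrt{2\lambda_0/\gamma}$ and $\psi_i'(\alpha_i)=\sqrt{2\gamma\lambda_0}$, translate the on-support disjointness condition (via the Euclidean ball $\mathfrak{S}$ versus the slabs $\Omega_j$ or $\tilde\Omega_j$), the off-support gradient bound, and \cref{ass3genthm} into inequalities on $\lambda_0$, and combine. The only cosmetic difference is that you fold the $C_K>1$ and $C_K\le 1$ branches into a single expression via $\min\{C_K,1\}$ from the start, whereas the paper writes the two cases out separately before collecting them; be mindful only of the strict-versus-nonstrict slip when you write ``at most'' for ball--slab disjointness (closed sets require strict separation, which is what makes the interval open on the right).
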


We dedicate~\Cref{proof:corolgenL2} to prove the above result. 

\subsubsection{Gaussian regression}\label{sec:quadfit}  
Here, we consider the setting described in~\cref{ex:LS}. Additionally,  we assume in this case, without loss of generality, that
\begin{equation}\label{eq:cond_norm_col_A}
    \|\bA\|_{1,2} := \max_{i=1,\ldots,N} \|\ba_i\|_2 < 1.
\end{equation}
By recalling that $\bA$ is a linear operator between finite dimensional spaces, hence bounded, we point out that the above requirement can always be satisfied through rescaling. In this context, the concavity condition \cref{eq:CC} turns into requiring $\gamma > \|\bA\|_{1,2}^2$ (see~\cite[Table 5]{Essafri2024}). Recalling~\cref{eq:cond_norm_col_A}, we may thus fix $\gamma=1$. Additionally, let us assume that~\cref{eq:LRIP} holds with constant $\delta_K^-<1$. Now, as already mentioned,~\cref{propo:BRSC_with_psiL2} implies that~\cref{ass:BRSCor} holds true with constant $C_K=\nu(1-\delta_K^-)/\gamma=\nu(1-\delta_K^-) = 1-\delta_K^-$ (notice that $G^{\mathrm{LS}}_\by$ is strongly convex with parameter $\nu=1$). In order to apply~\cref{coro:general_result_l2}, it remains to further specify the constants defined therein.

\begin{proposition} \label{propo:l2}Under the least-squares setting described above, the quantities of~\cref{coro:general_result_l2} are such that
\begin{align*}
    & \underline{\Lambda} <  \frac{\|\beps\|_2^2}{2\min \{(1-\delta_K^-)^2,1\}},  \\
    & \bar \Lambda =   \frac{\min \{(1-\delta_K^-)^2,1\}}{2}\left( \min_{i \in \sigma^*} |x_i^*| - \frac{\|\beps\|_2}{\sqrt{1-\delta_K^-}} \right)^2.
\end{align*}
\end{proposition}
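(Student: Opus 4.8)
The plan is to specialize the two quantities defined in \cref{coro:general_result_l2} to the least-squares setting of \cref{ex:LS}, where $\Psi=\ell_2$ with $\gamma=1$, where $F=G^{\mathrm{LS}}_\by(\bA\cdot)$ so that $F(\bx^*)=\tfrac12\|\bA\bx^*-\by\|_2^2=\tfrac12\|\beps\|_2^2$, where $\tilde L = L = 1$ (since $\dom G^{\mathrm{LS}}_\by=\R^M$, the choice $\tilde L = L$ is valid), and where $C_K = 1-\delta_K^-$.

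For the upper bound $\bar\Lambda$, this is a direct substitution: plugging $\gamma=1$, $C_K=1-\delta_K^-$, and $F(\bx^*)=\tfrac12\|\beps\|_2^2$ into \cref{eq:coro:general_result_l2bar} immediately gives
$$
\bar\Lambda = \frac{\min\{(1-\delta_K^-)^2,1\}}{2}\left(\min_{i\in\sigma^*}|x_i^*| - \sqrt{\frac{2\cdot \tfrac12\|\beps\|_2^2}{1-\delta_K^-}}\right)^2 = \frac{\min\{(1-\delta_K^-)^2,1\}}{2}\left(\min_{i\in\sigma^*}|x_i^*| - \frac{\|\beps\|_2}{\sqrt{1-\delta_K^-}}\right)^2,
$$
which is exactly the claimed expression, so this part requires no real work.

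For the upper bound on $\underline\Lambda$, I would start from
$$
\underline\Lambda = \tfrac12\|\beps\|_2^2 \,\max\!\left\{\frac{1}{\min\{C_K^2,1\}}\max_{i\in\sigma^{*c}}\|\ba_i\|_2^2,\ \frac{1}{1+K-2k^*}\right\},
$$
after substituting $\tilde L=\gamma=1$ and $F(\bx^*)=\tfrac12\|\beps\|_2^2$. The key step is to bound the maximum inside. For the first term, the normalization assumption \cref{eq:cond_norm_col_A} gives $\max_{i\in\sigma^{*c}}\|\ba_i\|_2^2 \le \|\bA\|_{1,2}^2 < 1$, so this term is strictly less than $1/\min\{C_K^2,1\} = 1/\min\{(1-\delta_K^-)^2,1\}$. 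For the second term, $1/(1+K-2k^*)\le 1 \le 1/\min\{(1-\delta_K^-)^2,1\}$ since $\delta_K^-<1$ forces $\min\{(1-\delta_K^-)^2,1\}\le 1$, and actually $1/(1+K-2k^*) \le 1 \le 1/\min\{(1-\delta_K^-)^2,1\}$. Hence the whole maximum is strictly bounded above by $1/\min\{(1-\delta_K^-)^2,1\}$, and multiplying by $\tfrac12\|\beps\|_2^2$ yields the strict inequality $\underline\Lambda < \|\beps\|_2^2/(2\min\{(1-\delta_K^-)^2,1\})$ as claimed.

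The only mild subtlety — and thus the "hard part," though it is not really hard — is making sure the strictness is genuinely justified: the first term is strictly less than the bound by \cref{eq:cond_norm_col_A} (which gives $<1$, not $\le 1$), while the second term could in principle equal the bound only if $K=2k^*$ and $\delta_K^-=0$, but even then the $\max$ picks up the strictly smaller first term's bound is not the issue — rather, since $\|\beps\|_2^2>0$ is implicitly assumed (otherwise $\underline\Lambda=0$ and the inequality is trivial or vacuous), one should briefly note whether the noiseless case needs separate mention. I would simply remark that if $\beps=\boldsymbol 0$ then $\underline\Lambda=0$ and the stated strict inequality holds trivially, and otherwise the strict inequality $\max_{i\in\sigma^{*c}}\|\ba_i\|_2^2<1$ from \cref{eq:cond_norm_col_A} propagates through. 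This completes the proof.
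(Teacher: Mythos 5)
Your substitution steps and the derivation of both $\bar\Lambda$ and the pre-strictness form of $\underline\Lambda$ match the paper's proof exactly; the approach is the same. The problem is your ``subtlety'' paragraph, which introduces two errors the paper does not make.

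First, your claim that ``if $\beps=\boldsymbol 0$ then $\underline\Lambda=0$ and the stated strict inequality holds trivially'' is backwards: with $\beps = \boldsymbol 0$ both sides of the claimed inequality are $0$, so the statement reads $0 < 0$, which is false, not trivially true. Second, your assertion that ``the whole maximum is strictly bounded above by $1/\min\{(1-\delta_K^-)^2,1\}$'' is not actually established. You only show that the first argument of the $\max$ is strictly below that bound while the second argument is $\leq$ it. If the second argument attains the bound (which happens precisely when $K=2k^*$ and $\min\{(1-\delta_K^-)^2,1\}=1$, i.e.\ $\delta_K^-\leq 0$), then the $\max$ equals the bound and the inequality degenerates to equality. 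The strictness from $\max_{i\in\sigma^{*c}}\|\ba_i\|_2^2<1$ cannot ``propagate through'' in that case, because the $\max$ is then achieved by the term to which that strict bound does not apply. To its credit, the paper's own proof is also somewhat terse here (it simply attributes the strict inequality to the normalization assumption), but it does not introduce the false noiseless-case claim. You correctly sensed there was a loose end in the strictness of the $\underline\Lambda$ bound, but the resolution you offered does not hold up; a correct account would have to restrict to the regime $\delta_K^->0$ or $K>2k^*$ (and $\beps\neq\boldsymbol 0$), or else weaken the inequality to $\leq$.
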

\begin{proof}
    Let $F=G_\by^{\mathrm{LS}}(\bA\cdot)$. Then, $F(\bx^*) = (1/2)\|\beps\|^2_2$ and,
    since $C_K = 1-\delta_K^-$, we have that $\min\{C_K^2,1\} = \min\{(1-\delta_K^-)^2,1\}$. Additionally, recall that in this case we have $\dom G^{\mathrm{LS}}_\by=\R^M$, and so, we may choose $\tilde L=L=1$. Injecting these expressions in~\cref{eq:coro:general_result_l2bar} lead to the claimed upper bound $\bar \Lambda$. Regarding the lower bound we have
    \begin{align*}
        & \underline{\Lambda} < \frac{\|\beps\|_2^2}{2} \max \left\lbrace \frac{1}{\min \{(1-\delta_K^-)^2,1\}}, \frac{1}{1+K-2k^*}  \right\rbrace = \frac{\|\beps\|_2^2}{2\min \{(1-\delta_K^-)^2,1\}}
    \end{align*}
    where the strict inequality comes from the assumption that $\max_{i \in \sigma^{*c}} \| \ba_i\|_2^2 <1$ and the last equality is due to the fact that $K \geq 2k^*$ by assumption.
\end{proof}

To interpret this result, notice that, combining~\cref{propo:l2} and~\cref{coro:general_result_l2}, the oracle solution $\bx^{\mathrm{or}}$ is the global minimizer of both $J_{\Psi}$ and $J_0$, and is in addition isolated in terms of sparsity for $J_\Psi$, if
\begin{equation}\label{eq:lamb_0_interval_Psi_l2}
    \lambda_0 \in \left( \frac{\|\beps\|_2^2}{2\min \{(1-\delta_K^-)^2,1\}} , \frac{\min \{(1-\delta_K^-)^2,1\}}{2}\left( \min_{i \in \sigma^*} |x_i^*| - \frac{\|\beps\|_2}{\sqrt{1-\delta_K^-}}\right)^2 \right).
\end{equation}
Hence, provided that the noise level $\|\beps\|_2$ is sufficiently small compared to the nonzero components of the true solution $\bx^*$, there exists a nonempty range of regularization parameters $\lambda_0$ such that the result holds true. 

To conclude with the least-squares problem, we add a final remark showing that our analysis allows us to improve existing results in the literature.

\begin{remark} [Comparison with the bounds derived in~\cite{Carlsson2020}] It was previously proved in~\cite[Theorem 4.9]{Carlsson2020} that the claimed result holds true (for the case $\delta_K^- \geq 0$ treated in~\cite{Carlsson2020}) if
\begin{equation}\label{eq:interval_Marcus}
    \lambda_0 \in \left(\frac{\|\beps\|_2^2}{2(1-\delta_K^-)^2}, \frac{(1-\delta_K^-)^2}{2(2 - \delta_K^-)^2}   \min_{i \in \sigma^*} |x_i^*|^2 \right).
\end{equation}
Note that the $2$ factors are not present in~\cite{Carlsson2020} as the least-squares loss is not weighted with $(1/2)$. This makes that each occurrence of the regularization parameter (denoted $\mu$ in~\cite{Carlsson2020}) has to be replaced by $2\lambda_0$ in our setting.

As it can be seen, we recovered exactly the lower bound of~\cref{eq:interval_Marcus} in~\cref{propo:l2}. Regarding the upper bound, let $\lambda_0$ satisfy~\cref{eq:interval_Marcus}. Then, as $(1-\delta_K^-) / (1-\delta_K^-)^{1/2} \leq 1$ for $\delta_K^- \geq 0$, we have that
$$
    \min_{i \in \sigma^*} |x_i^*|^2  > \frac{(2-\delta_K^-)^2}{(1-\delta_K^-)^2} {2\lambda_0} = \left( \frac{1}{1 - \delta_K^-} + 1 \right)^22\lambda_0 \geq \left( \frac{1}{1 - \delta_K^-} + \frac{1-\delta_K^-}{\sqrt{1-\delta_K^-}} \right)^2{2\lambda_0} 
$$
and, using that $1 - \delta_K^- \geq \|\beps\|_2 / \sqrt{2\lambda_0}$, we see that
\begin{align*}
     \min_{i \in \sigma^*} |x_i^*|^2> \left( \frac{1}{1-\delta_K^-} + \frac{\|\beps\|_2}{\sqrt{1-\delta_K^-}\sqrt{2\lambda_0}} \right)^2{2\lambda_0} 
\end{align*}
which implies that ${\lambda_0}$  belongs to the interval defined by~\cref{propo:l2} and~\cref{coro:general_result_l2}.  Hence, our upper bound is better (i.e. larger) than that of~\cref{eq:interval_Marcus}.
\end{remark}

\subsection{Poisson regression}\label{sec:exp_KL}

We now explore~\cref{ex:KL}, where $F=G_\by^{\mathrm{KL}}(\bA\cdot)$. Addressing the optimization landscape of this specific data fidelity term is of major practical importance, as it is the canonical choice for handling count data and signal-dependent noise in a wide variety of inverse problems, including Poisson intensity reconstruction and non-negative matrix factorization~\cite{Figue2010,FI2011,Harmany2012}, and image reconstruction tasks~\cite{hurault2023,klatzer2025,Daniele2025}. To apply our theoretical framework in this setting, the choice of the Bregman-generating function $\Psi$ proves critical. Indeed, if one simply sets $\Psi=\ell_2$, and as a direct consequence of~\cref{propo:negativeKL}, it turns out that~\cref{ass:BRSCor} does not hold in this setting, and so,~\cref{thm:oracuniquesparsest} cannot be applied. However, we have shown in~\cref{propo:positiveKL} that choosing the (smoothed) Burg entropy as the Bregman-generating function yields positive BRSC constants. Motivated by this result, and inspired by~\cite{EssafriKL} to ensure the exact relaxation property to $J_\Psi$, we consider the following KL Bregman-generating function: for every $\bx\in\R^N_{\geq 0}$, we fix
\begin{equation}\label{eq:PsiKL}
\Psi(\bx)=\sum_{i=1}^N \psi_i(x_i), \quad \text{with } 
\psi_i(x) := \gamma_i g^{\mathrm{KL}}_\xi(c_ix+\xi)
\end{equation}
where, we recall, $g^{\mathrm{KL}}_\xi (z) = z + \xi \log(\xi /z) - \xi$, and the parameters $\xi>0$, $\gamma_i, c_i>0$, $i=1,\ldots, N$, are set according to the concavity condition~\cref{eq:CC}, that is~\cite{EssafriKL}:
\begin{equation}\label{eq:CC_for_KL}
c_i=\min_{j\in\sigma(\ba_i)}a_{ji}, \quad \gamma_i>\sum_{j=1}^M\frac{a_{ji}^2y_j}{c_i^2 \xi},\quad \text{and } \xi\leq \min_{j=1,\ldots, M} b_j.
\end{equation}
Notice that, in the above, $c_i>0$ for all $i=1,\ldots, N$ since the minimum is taken among components within $\sigma(\ba_i)$, and $\sigma(\ba_i)\neq \emptyset$ for all $i=1,\ldots, N$ because, by assumption, the matrix $\bA$ does not have any column of only zeros. Moreover, one can easily check that such $\Psi$ satisfies each of the conditions given in~\cref{ass:generating_func_psi}. Note that the above choice of $\Psi$ slightly differs from the one proposed in~\cite{EssafriKL}, in the sense that we use $g^{\mathrm{KL}}_\xi$ instead of $g^{\mathrm{KL}}_1$ in~\cite{EssafriKL}. This change was of importance to get relevant bounds in~\cref{coro:KL_application} below for any $\mathbf{b} \in \R^M_{>0}$. For simplicity, such $\Psi$ satisfying the concavity condition described above will be denoted as $\Psi=\mathrm{KL}$. Now, we will show that ~\cref{ass:BRSCor} holds. To do so, let us denote the Burg entropy~\cref{eq:burg} as $\Phi$ with $\eta_i:=\xi/c_i$ for all $i=1,\ldots, N$ and $\Psi=\mathrm{KL}$ forming B-rex as described above. Observe that, for all $\bx, \bx'\in\cC^N$ we have
$$
\begin{aligned}
D_{\Psi}^{\ \mathrm{symm}}(\bx, \bx')&=\sum_{i=1}^N \gamma_i \frac{\xi c_i^2(x_i-x_i')^2}{(c_ix_i + \xi)(c_i x_i' + \xi )}\\
&=\sum_{i=1}^N \gamma_i \frac{\xi(x_i-x_i')^2}{(x_i + \xi/c_i)(x_i' + \xi/c_i )}\\
& \leq \xi \left(\max_i \gamma_i \right)D^{\ \mathrm{symm}}_{\Phi}(\bx, \bx').
\end{aligned}
$$
Then, since $F$ satisfies~\cref{eq:brsc} with respect to $\Phi$ with constant $C_K>0$ by~\cref{propo:positiveKL}, we conclude that $F$ satisfies~\cref{eq:brsc} with respect to $\Psi=\mathrm{KL}$ with constant $\tilde{C}_K=C_K/[\xi \left(\max_i \gamma_i \right)]$. 

Next, we detail the safe oracle region $\mathfrak{S}$ from~\cref{propo:safe_region}.
\begin{proposition}\label{propo:oracregKL} With $\Psi=\mathrm{KL}$, the safe oracle region $\mathfrak{S}$ in~\cref{propo:safe_region} becomes
$$
\mathfrak{S}=\left\{\bu\in\cC^{k^*} \mid \sum_{j=1}^{k^*}\gamma_{\sigma^*[j]}g^{\mathrm{KL}}_1\left(\frac{c_{\sigma^*[j]}u_j^*+\xi}{c_{\sigma^*[j]}u_j+\xi}\right)\leq \frac{F_{\sigma^*}(\bu^*)}{\xi C_K} \right\}
$$
\end{proposition}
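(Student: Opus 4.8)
The plan is to specialize the general safe oracle region $\mathfrak{S}$ from~\cref{propo:safe_region}, namely $\{\bu\in\cC^{k^*} \mid F_{\sigma^*}(\bu^*) \geq C_K D_{\Psi_{\sigma^*}}(\bu^*,\bu)\}$, to the concrete choice $\Psi=\mathrm{KL}$ given in~\cref{eq:PsiKL}. Since everything is separable, the first step is to compute the one-dimensional Bregman divergence $d_{\psi_i}(u^*,u)$ for $\psi_i(x)=\gamma_i g^{\mathrm{KL}}_\xi(c_i x+\xi)$. I would do this by the chain rule: writing $\phi_\xi(z):=g^{\mathrm{KL}}_\xi(z)=z+\xi\log(\xi/z)-\xi$, which has $\phi_\xi'(z)=1-\xi/z$, we get $\psi_i'(x)=\gamma_i c_i(1-\xi/(c_ix+\xi))$, and hence
\[
d_{\psi_i}(u^*,u)=\psi_i(u^*)-\psi_i(u)-\psi_i'(u)(u^*-u)
=\gamma_i\bigl[\phi_\xi(c_iu^*+\xi)-\phi_\xi(c_iu+\xi)-\phi_\xi'(c_iu+\xi)\,c_i(u^*-u)\bigr].
\]
The bracket is exactly $d_{\phi_\xi}(c_iu^*+\xi,\,c_iu+\xi)$, the Bregman divergence of $\phi_\xi$ evaluated at the shifted-scaled points, so $d_{\psi_i}(u^*,u)=\gamma_i\,d_{\phi_\xi}(c_iu^*+\xi,c_iu+\xi)$.

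The second step is to evaluate $d_{\phi_\xi}(a,b)$ for $\phi_\xi(z)=z+\xi\log(\xi/z)-\xi$. A direct computation gives $d_{\phi_\xi}(a,b)=(a-b)-\xi\log(a/b)-(1-\xi/b)(a-b)=\xi\bigl[(a/b-1)-\log(a/b)\bigr]=\xi\, g^{\mathrm{KL}}_1(b/a)$, using the identity $g^{\mathrm{KL}}_1(t)=t-1-\log t$ after noting $g^{\mathrm{KL}}_1(b/a)=b/a-1-\log(b/a)$ — one must be careful with the orientation of the ratio, and check that substituting $t\mapsto b/a$ versus $a/b$ matches the sign convention, i.e.\ verify $(a/b-1)-\log(a/b)=g^{\mathrm{KL}}_1(a/b)$ and then, since the target statement has the ratio $(c_{\sigma^*[j]}u^*_j+\xi)/(c_{\sigma^*[j]}u_j+\xi)$, identify $a=c_iu^*+\xi$ and $b=c_iu+\xi$ with the divergence written as $\xi\, g^{\mathrm{KL}}_1(a/b)$. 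Combining with the first step, $d_{\psi_i}(u^*,u)=\gamma_i\xi\, g^{\mathrm{KL}}_1\!\bigl((c_iu^*+\xi)/(c_iu+\xi)\bigr)$.

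The final step is to sum over the support: $D_{\Psi_{\sigma^*}}(\bu^*,\bu)=\sum_{j=1}^{k^*} d_{\psi_{\sigma^*[j]}}(u^*_j,u_j)=\xi\sum_{j=1}^{k^*}\gamma_{\sigma^*[j]}\, g^{\mathrm{KL}}_1\!\bigl((c_{\sigma^*[j]}u^*_j+\xi)/(c_{\sigma^*[j]}u_j+\xi)\bigr)$, so the defining inequality $F_{\sigma^*}(\bu^*)\geq C_K D_{\Psi_{\sigma^*}}(\bu^*,\bu)$ becomes $\sum_{j=1}^{k^*}\gamma_{\sigma^*[j]}\, g^{\mathrm{KL}}_1\!\bigl((c_{\sigma^*[j]}u^*_j+\xi)/(c_{\sigma^*[j]}u_j+\xi)\bigr)\leq F_{\sigma^*}(\bu^*)/(\xi C_K)$, which is precisely the claimed description of $\mathfrak{S}$.

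I do not expect any serious obstacle here — the proof is a routine chain-rule computation followed by the elementary Bregman-divergence identity for $-\log$. The only point requiring care is bookkeeping of the argument orientation of $g^{\mathrm{KL}}_1$ (which ratio goes on top) and the consistent use of $g^{\mathrm{KL}}_\xi$ versus $g^{\mathrm{KL}}_1$, together with tracking the $\xi$ and $\gamma_i, c_i$ factors through the scaling $z\mapsto c_ix+\xi$; this is exactly the place where the remark preceding the proposition (that using $g^{\mathrm{KL}}_\xi$ rather than $g^{\mathrm{KL}}_1$ matters for obtaining clean bounds) becomes relevant.
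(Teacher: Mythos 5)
Your proposal is correct and essentially reproduces the paper's argument: a direct specialization of $D_{\Psi_{\sigma^*}}$ to the KL generating function $\psi_i(x)=\gamma_i g^{\mathrm{KL}}_\xi(c_ix+\xi)$, simplification to expose a factor of $\xi$, and recognition of the remaining expression as $g^{\mathrm{KL}}_1$ of the ratio $(c_iu^*+\xi)/(c_iu+\xi)$. You organize the computation via the affine chain rule for Bregman divergences, $d_{\psi_i}(u^*,u)=\gamma_i\,d_{\phi_\xi}(c_iu^*+\xi,c_iu+\xi)$, while the paper expands $d_{\psi_i}$ directly and then regroups; these are the same calculation in different bookkeeping, and you land on the correct identity $d_{\phi_\xi}(a,b)=\xi\,g^{\mathrm{KL}}_1(a/b)$ after catching the orientation issue you flag.
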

\begin{proof} Recall that $\bu\in\mathfrak{S}$ means
$$
D_{\Psi_{\sigma^*}}(\bu^*,\bu)=\sum_{j=1}^{k^*}\gamma_{\sigma^*[j]}\left(\psi_{\sigma^*[j]}(u_j^*)-\psi_{\sigma^*[j]}(u_j)-\psi'_{\sigma^*[j]}(u_j)(u_j^*-u_j)\right)\leq\frac{F_{\sigma^*}(\bu^*)}{C_K}.
$$
Using the specific form of $\Psi=\mathrm{KL}$, the above is equivalent to
$$
\sum_{j=1}^{k^*}\gamma_{\sigma^*[j]}\left(\log\left(\frac{c_{\sigma^*[j]}u_j+\xi}{c_{\sigma^*[j]}u^*_j+\xi}\right)+\frac{c_{\sigma^*[j]}(u_j^*-u_j)}{c_{\sigma^*[j]}u_j+\xi}\right)\leq\frac{F_{\sigma^*}(\bu^*)}{\xi C_K}
$$
where, up to the multiplicative terms $\gamma_{\sigma^*[j]}$, the elements of the sum in the left hand side satisfy, for every $j=1\ldots, k^*$,
$$
\begin{aligned}
\log\left(\frac{c_{\sigma^*[j]}u_j+\xi}{c_{\sigma^*[j]}u^*_j+\xi}\right)+\frac{c_{\sigma^*[j]}(u_j^*-u_j)}{c_{\sigma^*[j]}u_j+\xi} &= \frac{c_{\sigma^*[j]}u_j^*+\xi}{c_{\sigma^*[j]}u_j+\xi}-\log\left(\frac{c_{\sigma^*[j]}u^*_j+\xi}{c_{\sigma^*[j]}u_j+\xi}\right)-1\\
&=g^{\mathrm{KL}}_1\left(\frac{c_{\sigma^*[j]}u_j^*+\xi}{c_{\sigma^*[j]}u_j+\xi}\right),
\end{aligned}
$$
concluding.
\end{proof}

In order to write the corresponding version of~\cref{thm:oracuniquesparsest}, and ease its interpretation, we let $\beps := \by - \by^* = \by - \bA\bx^* - \mathbf{b}$. Then, as $y_j^* = [\bA \bx^*]_j + b_j$, we have
\begin{equation}\label{eq:up_bound_Fxstar}
    F(\bx^*) = \sum_{j=1}^M \left[(y^*_j + \varepsilon_j)\log\left(\frac{y^*_j+\varepsilon_j}{y_j^*}\right) -  \varepsilon_j\right] \leq f(\|\beps\|_\infty, m(\bx^*)), 
\end{equation}
where $m(\bx^*) = \min_{i \in \sigma^*} x^*_i$, and the function $f$ is defined, for all $\varepsilon, x\in\R_{\geq 0}$, as
$$
f(\varepsilon,x) := \sum_{j=1}^M \left( (a_j x + b_j + \varepsilon)\log \left( \frac{a_jx + b_j+ \varepsilon}{a_jx + b_j}\right)   - \varepsilon \right),
$$
being $a_j : =  \sum_{i \in \sigma^*} a_{j,i}$. Next, notice that $f$ is decreasing whenever $\varepsilon\to 0$ or $x\to \infty$. Additionally, 
\begin{equation}\label{eq:limitsfepsx}
\lim_{x\to+\infty}f(\varepsilon, x)=0,\quad \text{ and } \lim_{\varepsilon\to 0}f(\varepsilon, x)=0.
\end{equation}
Combining everything, we see that  $F(\bx^*) = F_{\sigma^*}(\bu^*)$ decreases when either the noise level $\|\beps\|_\infty$ decreases or the nonzero elements of $\bx^*$ increase. We are now ready to particularize~\cref{thm:oracuniquesparsest} to this context.

\begin{corollary} \label{coro:KL_application} Let $\Psi=\mathrm{KL}$, let~\cref{ass:BRSCor} be satisfied, and consider the safe oracle region $\mathfrak{S}$ given by \cref{propo:oracregKL}.
Moreover, define
\begin{equation}\label{eq:lowerlambKL}
 \mkern-5mu \underline \Lambda  \mkern-4mu   := \mkern-4mu \max \mkern-3mu \Bigg\{\frac{f(\|\beps\|_\infty, m(\bx^*))}{1+K-2k^*}, \max_{i\in{\sigma^*}^c}\Big\{- \gamma_i \xi \log \Big(1  - \frac{\|\ba_i\|_2 \sqrt{2\tilde Lf(\|\beps\|_\infty, m(\bx^*))} }{\min\{C_K,1\} \gamma_i c_i} \Big)\Big\}\Bigg\}  \mkern-10mu
\end{equation}
as well as
\begin{equation}\label{eq:upperlambKL}
\bar \Lambda :=\min_{i\in\sigma^*}\left\{-\gamma_i\xi\left(\log\left(1-e^{\displaystyle h_i(\|\beps\|_\infty, m(\bx^*))}\right)+1\right)\right\},
\end{equation}
for some functions (see proof) $h_i\colon \R_{\geq 0}\times\R_{\geq 0}\to \R_{\leq 0}$ (which differ depending if $C_K < 1$ or $C_K \geq1$), $i\in\sigma^*$, that are increasing when $\|\beps\|_\infty\to 0$ or $m(\bx^*)\to+\infty$. 

Then, if $\lambda_0 \in (\underline \Lambda, \bar \Lambda)$, we have that $\bx^\mathrm{or}$ is the unique global minimizer of $J_\Psi$ (and so of $J_0$) with $\sigma(\bx^\mathrm{or})=\sigma^*$, and any other critical point $\bx'$ of $J_\Psi$, $\bx'\neq\bx^\mathrm{or}$, satisfies $\|\bx'-\bx^\mathrm{or}\|_0>K$.

Additionally, the $h_i$'s increases towards
\begin{enumerate}[label=(\roman*),leftmargin=*]
    \item $(c_i u_i^*(\min\{C_K,1\}-1)-\xi)/(c_iu_i^* + \xi)$ when $\|\beps\|_\infty \to 0$,
    \item $\min\{C_K,1\} -1$ when $m(\bx^*) \to \infty$.
\end{enumerate}
\end{corollary}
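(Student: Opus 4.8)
The goal is to specialize Theorem~\ref{thm:oracuniquesparsest} to the KL setting with $\Psi=\mathrm{KL}$, so the overall strategy is to translate each of the abstract conditions~\cref{ass1genthm},~\cref{ass2genthm},~\cref{ass3genthm} (or~\cref{eq:onsup},~\cref{eq:offsup} when $\tilde C_K>1$) into explicit constraints on $\lambda_0$, using the concrete forms of $\psi_i$, the $\alpha_i$'s, and the safe oracle region $\mathfrak{S}$ derived in~\Cref{propo:oracregKL}. The first step is to make the relation $d_{\psi_i}(0,\alpha_i)=\lambda_0$ explicit: since $\psi_i(x)=\gamma_i g^{\mathrm{KL}}_\xi(c_i x+\xi)$, a direct computation gives $d_{\psi_i}(0,\alpha_i) = \gamma_i\big(\xi\log(\xi/(c_i\alpha_i+\xi)) + c_i\alpha_i\big)$ — wait, more carefully, $d_{\psi_i}(0,\alpha_i)=\psi_i(0)-\psi_i(\alpha_i)-\psi_i'(\alpha_i)(0-\alpha_i)$, and I would solve this for $\alpha_i$ as a function of $\lambda_0$, or rather express $\psi_i'(\alpha_i)-\psi_i'(0)$ in terms of $\lambda_0$. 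Using $\psi_i'(x)=\gamma_i c_i(1-\xi/(c_ix+\xi))=\gamma_i c_i^2 x/(c_ix+\xi)$, one finds $\psi_i'(\alpha_i)-\psi_i'(0)=\gamma_i c_i^2\alpha_i/(c_i\alpha_i+\xi)$, and the sublevel relation lets us write $c_i\alpha_i/(c_i\alpha_i+\xi)$ — equivalently $\alpha_i$ itself — implicitly through $\lambda_0=\gamma_i\xi(\mu_i-\log(1+\mu_i/\xi\cdot\xi))$ type identities; the clean move is to introduce the variable $t_i := \xi/(c_i\alpha_i+\xi)\in(0,1]$ so that $\lambda_0 = \gamma_i\xi(t_i - 1 - \log t_i)$ and $\psi_i'(\alpha_i)-\psi_i'(0)=\gamma_i c_i(1-t_i)$. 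This monotone change of variables is the computational backbone of everything that follows.

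**Translating the off-support condition.** For~\cref{ass2genthm}, $\|\ba_i\|_2\sqrt{2\tilde L F(\bx^*)} < C_K(\psi_i'(\alpha_i)-\psi_i'(0)) = C_K\gamma_i c_i(1-t_i)$ becomes a lower bound on $1-t_i$, i.e. an upper bound on $t_i$; inverting the monotone map $t_i\mapsto\gamma_i\xi(t_i-1-\log t_i)$ (which is decreasing on $(0,1]$) turns this into a \emph{lower} bound on $\lambda_0$, and using~\cref{eq:up_bound_Fxstar} to replace $F(\bx^*)$ by $f(\|\beps\|_\infty,m(\bx^*))$ produces exactly the second term in~\cref{eq:lowerlambKL}, after bounding $-\log(t_i)$ appropriately; the precise algebra should recover the stated $-\gamma_i\xi\log(1-\|\ba_i\|_2\sqrt{2\tilde L f}/(\min\{C_K,1\}\gamma_i c_i))$ expression. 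Here $\min\{C_K,1\}$ appears because when $C_K\geq 1$ the weaker conditions~\cref{eq:offsup} suffice (so $C_K$ is replaced by $1$), while when $C_K<1$ we genuinely need the factor $C_K$. The term from~\cref{ass3genthm} combined with $F(\bx^*)\leq f$ gives the first term in the max defining $\underline\Lambda$ directly.

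**Translating the on-support condition.** This is where the safe oracle region enters. For~\cref{ass1genthm} we need $\mathfrak{S}\cap\tilde\Omega=\emptyset$, where $\tilde\Omega_j=\{\bu: \psi'_{\sigma^*[j]}(u_j)\in[-\rho_j,\rho_j]\}$. Since for $\cC=\R_{\geq 0}$ we have $u_j\geq 0$ hence $\psi'_{\sigma^*[j]}(u_j)\geq 0$, membership in $\tilde\Omega_j$ means $\psi'_{\sigma^*[j]}(u_j)\leq\rho_j$, which (via the monotone $t$-substitution) translates to $u_j$ being small — below some threshold $\bar u_j$ depending on $\rho_j$ and hence on $\alpha_{\sigma^*[j]}/C_K$. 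The emptiness of the intersection then says: every point of $\mathfrak{S}$ has each coordinate $u_j>\bar u_j$. The smallest coordinate value over $\mathfrak{S}$ is controlled by the defining inequality $\sum_j\gamma_{\sigma^*[j]}g^{\mathrm{KL}}_1((c_{\sigma^*[j]}u_j^*+\xi)/(c_{\sigma^*[j]}u_j+\xi))\leq F_{\sigma^*}(\bu^*)/(\xi C_K)$; bounding the sum below by its largest single term (or by one term while dropping the rest, using nonnegativity of $g^{\mathrm{KL}}_1$) gives for each $j$ an inequality $g^{\mathrm{KL}}_1((c_{\sigma^*[j]}u_j^*+\xi)/(c_{\sigma^*[j]}u_j+\xi))\leq F_{\sigma^*}(\bu^*)/(\xi\gamma_{\sigma^*[j]}C_K)$, which, since $g^{\mathrm{KL}}_1$ is a convex function with minimum $0$ at $1$, can be inverted to bound the ratio $(c_{\sigma^*[j]}u_j^*+\xi)/(c_{\sigma^*[j]}u_j+\xi)$ away from the regime that would put $u_j$ below $\bar u_j$. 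Pushing this through and solving for the range of $\lambda_0$ (which enters via $\alpha_{\sigma^*[j]}$, hence via $\rho_j$, hence via $\bar u_j$) produces the upper bound $\bar\Lambda$ in~\cref{eq:upperlambKL}; the auxiliary function $h_i(\|\beps\|_\infty,m(\bx^*))$ is precisely the quantity obtained by inverting $g^{\mathrm{KL}}_1$ on the relevant branch, and one reads off $h_i<0$ from $g^{\mathrm{KL}}_1>0$ away from $1$. I would define $h_i$ explicitly as this inverse expression (distinguishing the two branches by whether $C_K<1$, since that changes whether $\bar u_j$ is governed by $\rho_j$ or directly by $\alpha_{\sigma^*[j]}$), then verify monotonicity of $h_i$ in $\|\beps\|_\infty$ and $m(\bx^*)$ using~\cref{eq:limitsfepsx} together with monotonicity of $f$; the limiting values in items (i) and (ii) follow by taking $f\to 0$ (so the constraint on the ratio becomes trivial, giving $h_i\to$ the value corresponding to $u_j=u_j^*$, namely $(c_iu_i^*(\min\{C_K,1\}-1)-\xi)/(c_iu_i^*+\xi)$) and $m(\bx^*)\to\infty$ (where $c_iu_i^*\to\infty$ dominates, leaving $\min\{C_K,1\}-1$).

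**Main obstacle.** The delicate part is the on-support condition: unlike the $\Psi=\ell_2$ case where $\mathfrak{S}$ is a Euclidean ball and $\tilde\Omega$ is a box, so the emptiness of the intersection reduces to a clean one-line distance computation, here $\mathfrak{S}$ is a sublevel set of a \emph{coupled} sum of KL-type terms and $\tilde\Omega$ is a union of slabs in the nonlinearly-transformed coordinates $\psi_i'(u_j)$. Extracting a per-coordinate lower bound on the coordinates of points in $\mathfrak{S}$ requires carefully exploiting separability plus nonnegativity of each $g^{\mathrm{KL}}_1$ term (to reduce the sum to a single term), then inverting $g^{\mathrm{KL}}_1$ on the correct monotone branch — and one must be careful that $g^{\mathrm{KL}}_1(s)$ is not monotone in $s$ (it decreases on $(0,1)$ and increases on $(1,\infty)$), so the relevant branch is $s=(c_iu_i^*+\xi)/(c_iu_i+\xi)>1$, i.e. $u_i<u_i^*$, which is indeed the regime of interest. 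Getting the case split $C_K<1$ versus $C_K\geq 1$ right throughout (it propagates through $\rho_j$, through which version of the off-support condition applies, and through the definition of $h_i$) is the main bookkeeping hazard, but the underlying mechanism is the same monotone inversion in both cases.
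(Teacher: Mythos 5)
The high-level plan — specialize Theorem~\ref{thm:oracuniquesparsest}, express $\alpha_i$ implicitly through $d_{\psi_i}(0,\alpha_i)=\lambda_0$, split the analysis by $C_K\lessgtr 1$, and invert monotone expressions to isolate $\lambda_0$ — is the right one, and your change of variables $t_i:=\xi/(c_i\alpha_i+\xi)$ together with $\lambda_0=\gamma_i\xi(t_i-1-\log t_i)$, $\psi_i'(\alpha_i)-\psi_i'(0)=\gamma_ic_i(1-t_i)$ is correct (it is nothing but the Lambert-function relation $\alpha_i=-\tfrac{1}{c_i}(\xi/W(-e^{-\kappa_i})+\xi)$ of the paper written implicitly). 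However, there is a genuine gap: you never bridge the exact inversion to the \emph{specific closed-form expressions stated in the corollary}. The corollary does not merely assert the existence of a valid interval $(\underline\Lambda,\bar\Lambda)$; it asserts the particular formulas $\underline\Lambda$ with the $-\gamma_i\xi\log(1-\cdot)$ term, and $\bar\Lambda=\min_i\{-\gamma_i\xi(\log(1-e^{h_i})+1)\}$, together with the precise limits in (i)--(ii). Those forms are obtained in the paper by using the Lambert function bounds $W(-e^{-\kappa})\geq -e^{1-\kappa}$ (for $\underline\Lambda$) and the Hoorfar--Hassani inequality $W(x)\leq\log(1+x)$ (for $\bar\Lambda$), which deliberately trade a little sharpness for the displayed closed form. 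Your exact inversion would give $\underline\Lambda_{\text{exact}}=\gamma_i\xi(-r-\log(1-r))$ and $\bar\Lambda_{\text{exact}}=\gamma_i\xi(t^*-1-\log t^*)$, which are strictly sharper but structurally different — in particular, there is no choice of $h_i$ turning $\gamma_i\xi(t^*-1-\log t^*)$ into $-\gamma_i\xi(\log(1-e^{h_i})+1)$ while keeping the limits in (i)--(ii), so the last part of the corollary (the asymptotics of the $h_i$'s) would not come out. You would need to introduce the Lambert-type (or an equivalent) approximation step explicitly.

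A second, smaller issue: your justification that $h_i<0$ (``one reads off $h_i<0$ from $g^{\mathrm{KL}}_1>0$ away from $1$'') does not correspond to what $h_i$ actually is. In the paper, $h_i=-\xi/(cg_j^{-1}(\cdot)+\xi)$ (with $g_1$ or $g_2$ depending on the case), so $h_i\in[-1,0)$ follows simply because it is minus a ratio of positive numbers, not from positivity of $g^{\mathrm{KL}}_1$. Moreover the coupling you flag — the safe region $\mathfrak{S}$ being a sublevel of a sum of KL-type terms — is handled by picking the single index $j_0$ guaranteed to violate the $\Omega$- or $\tilde\Omega$-membership and dropping the other (nonnegative) summands, which is what you sketch, but the per-case inversions $g_1(\alpha)=1-c\alpha/(C_K(c\alpha+\xi))$ and $g_2(\alpha)=1/(c\alpha+\xi)$ and their domains of invertibility need to be stated and checked; this is the bookkeeping you flagged as a hazard, and it is indeed where the distinction $C_K<1$ versus $C_K\geq 1$ bites.
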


We dedicate~\Cref{proofcorolKL} to prove the result. 


In order to provide an understandable interpretation of the above result, we distinguish both of the estimates; i.e., we analyze the behavior of both $\underline \Lambda$ and $\bar \Lambda$ with respect to $\|\beps\|_\infty$ and $m(\bx^*)$. Notice that, given the variations and limits of $f$ and $h_i$, $i \in \sigma^*$, if either the noise level tends to zero or the nonzero components of $\bx^*$ tend to $+\infty$, one observes that $1)$ $\underline \Lambda$ decreases towards $0$, and $2)$  $\bar \Lambda$ increases. In particular, if $C_K\geq 1$ and $m(\bx^*)\to+\infty$, we have that $\bar \Lambda$ increases towards $+\infty$. In all, we see that the interval $(\underline \Lambda,\bar \Lambda)$ enlarges whenever the noise level $\|\beps\|_\infty$ tends to zero and/or the nonzero components of the true solution $\bx^*$ tend to $+\infty$, therefore indicating that, as long as the noise level is small enough compared to the nonzero components of $\bx^*$, the interval $(\underline \Lambda, \bar \Lambda)$ is nonempty.

\section{Proofs}

\subsection{\texorpdfstring{Proofs of~\Cref{sec:brsc}}{Proos of section 2}}

\subsubsection{\texorpdfstring{Proof of~\cref{propo:BRSC_with_psiL2}}{Proof of Proposition 2.1}}\label{proof:propoBRSC_Psil2}

To prove the result, we need to show that \cref{eq:RSC} holds for all $\bx, \bx'\in\cC^N$ with $\|\bx-\bx'\|_0\leq K$ and with constant $\mu_K=(1-\delta_K^-)\nu$. To do so, it suffices to prove that, for all $\omega\subseteq\{1,\ldots, N\}$ with $\#\omega\leq K$, we have
$$
D^{\ \mathrm{symm}}_{G(\bA_\omega\cdot)}(\bx_\omega, \bx_\omega')\geq (1-\delta_K^-)\nu \|\bx_\omega -\bx_\omega'\|_2^2
$$
for all $\bx, \bx'\in\cC^N$ with $\bx_{\omega^c} = \bx'_{\omega^c}$ and $\bx_{\omega} \neq \bx'_{\omega}$. Let $\omega$ with $\#\omega\leq K$ and observe that, as $G$ is strongly convex with parameter $\nu>0$,
\begin{align*}
D^{\ \mathrm{symm}}_{G(\bA_\omega\cdot)}(\bx_\omega, \bx_\omega')&=\langle \bA_\omega^T\nabla G(\bA_\omega\bx_\omega)-\bA_\omega^T\nabla G(\bA_\omega\bx_\omega'), \bx_\omega-\bx_\omega'\rangle\\
&=\langle \nabla G(\bA_\omega\bx_\omega)-\nabla G(\bA_\omega\bx_\omega'), \bA_\omega\bx_\omega-\bA_\omega\bx_\omega'\rangle\\
&\geq \nu\|\bA_\omega\bx_\omega-\bA_\omega\bx_\omega'\|_2^2,
\end{align*}
for all $\bx, \bx'\in\cC^N$ as above. Finally, as~\cref{eq:LRIP} holds by assumption, we conclude that 
\begin{align*}
D^{\ \mathrm{symm}}_{G(\bA_\omega\cdot)}(\bx_\omega, \bx_\omega')\geq (1-\delta_K^-)\nu\|\bx_\omega-\bx_\omega'\|_2^2,
\end{align*}
as desired.

\subsubsection{\texorpdfstring{Proof of~\cref{propo:negativeKL}}{Proof of Proposition 5.8}}\label{proof:negativeKL}
To prove the result, we only need to show that $C_1 = 0$. Then, from~\cref{prop:orderBRSCconst} we directly get that $C_K \leq C_1 = 0$ for any $K\geq1$. We recall that~\cref{eq:brsc} in this case reads: for all $\bx\in\cX$, $\bx'\in\R^N$ with $\bx\neq\bx'$ and $\|\bx-\bx'\|_0\leq 1$,
\begin{equation}\label{eq:BRSC_0_LR}
\left<\bA^T(\nabla G_\by^{\mathrm{KL}}(\bA\bx) - \nabla G_\by^{\mathrm{KL}}(\bA\bx')), \bx - \bx' \right>\geq C_1 \|\bx - \bx'\|_2^2.
\end{equation}
Now, let $\omega=\{i_0\}$ with $i_0\in\{1,\ldots, N\}$, and observe that a necessary condition for~\cref{eq:BRSC_0_LR} to hold in this case is
$$
\sum_{j=1}^M a_{ji_0}\left( [\nabla G^{\mathrm{KL}}_\by(\bA \bx)]_j - [\nabla G^{\mathrm{KL}}_\by(\bA_{\omega^c} \bx_{\omega^c} + \ba_{i_0}x_{i_0}')]_j \right) (x_{i_0} - x_{i_0}') \geq  C_1 (x_{i_0} - x_{i_0}')^2
$$
for all $\bx\in\cX$ and $\bx'\in\R^N$ with $x_i=x_i'$ for $i\neq i_0$ and $x_{i_0}\neq x'_{i_0}$.  Rearranging and developing the expression of $G_\by^{\mathrm{KL}}$, we derive that $C_1$ is such that 
$$
\begin{aligned}
C_1& \leq\inf_{\substack{\bx\in\cX, \\ x'_{i_0}\in\R_{\geq 0} }} \sum_{j=1}^M \frac{a_{ji_0}\left( [\nabla G^{\mathrm{KL}}_\by(\bA \bx)]_j - [\nabla G^{\mathrm{KL}}_\by(\bA_{\omega^c} \bx_{\omega^c} + \ba_{i_0}x_{i_0}')]_j \right)}{x_{i_0} - x_{i_0}'}\\
&=\inf_{\substack{\bx\in\cX, \\ x'_{i_0}\in\R_{\geq 0} }} \sum_{j=1}^M\frac{y_ja^2_{ji_0}}{([\bA\bx]_j+b_j)(a_{ji_0}x'_{i_0}+[\bA_{\omega^c}\bx_{\omega^c}]_j+b_j)}.
\end{aligned}
$$
Next, we fix $\bx\in\cX$ and analyze each term of the above sum when $x'_{i_0}$ tends to $+\infty$. They are functions of the form
$$
h(x')=\frac{ya^2}{\delta_1ax'+\delta_2},
$$
with $\delta_1, \delta_2>0$ for $i=1, 2$. Now, if $y=0$, then $h(x')=0$ for all $x'\geq 0$, and this term does not contribute to the sum. Hence, we assume that $y>0$ and notice that, if $a=0$, then again $h(x')=0$ for all $x\geq 0$, and this term does not contribute to the sum either. Therefore, the only terms contributing to the above sum are those in which $y, a>0$. However, these terms are such that
$$
\lim_{x' \to +\infty}  \frac{ya}{\delta_1ax'+\delta_2} = 0,
$$
Combining everything, we conclude that $C_1=0$, as desired.

\subsubsection{\texorpdfstring{Proof of~\cref{propo:positiveKL}}{Proof of Theorem 5.7}}\label{proofthmKL}

We need to show that there exists $C_K>0$ such that 
\begin{equation}\label{eq:BRSC_for_KL}
\sum_{j =1}^{\#\sigma_\by} \frac{y_{\sigma_\by(j)}[\tilde{\bA}(\bx-\bx')]_j^2}{([\tilde{\bA}\bx]_j+b_{\sigma_\by(j)})([\tilde{\bA}\bx']_j+b_{\sigma_\by(j)})}\geq  C_K
\sum_{i=1}^N \frac{(x_i-x_i')^2}{(x_i + \eta_i)(x_i' + \eta_i )}
\end{equation}
for all $\bx\in\cX$ and $\bx'\in\cC^N$ with $\|\bx-\bx'\|_0\leq K$; and to do so, it suffices to show that, for all $\omega \subseteq \{1,\ldots, N\}$ with $\#\omega \leq K$,  there exists $C_\omega>0$ such that
\begin{equation}\label{eq:BRSC_for_KL-2}
\sum_{j =1}^{\#\sigma_\by} \frac{y_{\sigma_\by(j)}[\tilde{\bA}_\omega(\bx_\omega-\bx'_\omega)]_j^2}{([\tilde{\bA}\bx]_j+b_{\sigma_\by(j)})([\tilde{\bA}_\omega\bx'_\omega]_j \mkern-3mu +[\tilde{\bA}_{\omega^c}\bx_{\omega^c}]_j +b_{\sigma_\by(j)})}\geq  C_\omega \sum_{i \in \omega}  \frac{(x_i-x_i')^2}{(x_i + \eta_i)(x_i' + \eta_i)}
\end{equation}
for all $\bx\in\cX$ and $\bx'\in\cC^N$ with $\bx_{\omega^c} = \bx'_{\omega^c}$ and $\bx_{\omega} \neq \bx'_{\omega}$, as this implies that~\cref{eq:BRSC_for_KL} holds true with constant $C_K = \min_{\#\omega\leq K} C_\omega$. Now, let us fix $\omega$ such that $\#\omega \leq K$. To simplify~\cref{eq:BRSC_for_KL-2} we first lower-bound (resp., upper-bound) the left-hand-side (resp., the right-hand side) with simpler functions, exploiting i) the constraints on $\bx$ and $\bx'$ given $\omega$, ii) the non-negativity of $\bA$, $\bx$, $\bx'$, $\by$ and $\mathbf{b}$, and iii) the boundedness of $\cX$. Defining the following constants
$$
 \delta_1 := \frac{\min_{j\in \sigma_\by} y_j}{\max_{\bx \in \cX} \|\tilde{\bA}\bx + \mathbf{b}_{\sigma_\by}\|_2} >0, \quad \text{ and } \quad \delta_2 := \max_{\bx \in \cX} \|\tilde{\bA}_{\omega^c} \bx_{\omega^c} + \mathbf{b}_{\sigma_\by} \|_2>0,
$$
we get that the left hand side of~\cref{eq:BRSC_for_KL-2} satisfies
\begin{align}
    \sum_{j =1}^{\#\sigma_\by} \frac{y_{\sigma_\by(j)}[\tilde{\bA}_\omega(\bx_\omega-\bx'_\omega)]_j^2}{([\tilde{\bA}\bx]_j+b_{\sigma_\by(j)})([\tilde{\bA}_\omega\bx'_\omega]_j +[\tilde{\bA}_{\omega^c}\bx_{\omega^c}]_j +b_{\sigma_\by(j)})} & \geq \delta_1 \frac{\|\tilde{\bA}_\omega(\bx_\omega - \bx'_\omega)\|_2^2}{\|\tilde{\bA}\|_2 \|\bx'_\omega\|_2 + \delta_2} \notag \\  
    &\geq\delta_1 (1-\delta_K^-) \frac{\|\bx_\omega - \bx'_\omega\|_2^2}{\|\tilde{\bA}\|_2 \|\bx'_\omega\|_2 + \delta_2}.\label{eq:BRSC_for_KL-3}
\end{align}
for all $\bx\in\cX$ and all $\bx'\in\cC^N$ with $\bx_{\omega^c} = \bx'_{\omega^c}$ and $\bx_{\omega} \neq \bx'_{\omega}$, and where the last inequality follows by \cref{eq:LRIP}. 
Similarly, defining
$$
 \delta_3 := \left(\min_{\bx \in \cX} \min_{i \in \omega} x_i + \eta_i \right)^{-1} >0, 
$$
the right hand side of~\cref{eq:BRSC_for_KL-2} satisfies
\begin{equation}\label{eq:BRSC_for_KL-4}
     \sum_{i \in \omega} \frac{(x_i-x_i')^2}{(x_i+\eta_i)(x_i'+\eta_i)} \leq \delta_3 \sum_{i\in \omega} \frac{(x_i-x_i')^2}{x_i' + \eta_i} 
\end{equation}
for all $\bx$, $\bx'$ as above. Combining~\cref{eq:BRSC_for_KL-3} and~\cref{eq:BRSC_for_KL-4}, together with the existence of a sufficiently large $Q>0$ such that $\cX \subseteq [0,Q]^N$ ($\cX$ bounded), we get that a valid constant $C_\omega$ to have~\cref{eq:BRSC_for_KL-2} is given by
$$
     C_\omega := \frac{\delta_1 (1-\delta_K^-)}{\delta_3} \inf_{\substack{\bu \in [0,Q]^{k}, \bu' \in \R_{\geq 0 }^{k} \\ \bu\neq \bu', \boldsymbol{\Delta} = \bu - \bu'}}  \Gamma(\bu,\bu') :=\frac{\|\boldsymbol{\Delta}\|_2^2}{\|\tilde{\bA}\|_2 \|\bu'\|_2 + \delta_2}  \left( \sum_{i=1}^{k} \frac{\Delta_i^2}{u_i'+\eta_i}  \right)^{-1}
$$
where $k=\#\omega$ which, by the way, is the sole remaining dependence in $\omega$. 
As such, by~\cref{prop:orderBRSCconst},  we directly get a valid $C_K  = \min_{\#\omega\leq K} C_\omega$ by analyzing the above infimum for $k = K$.
Our goal now is to show that this constant is positive; that is, the ratio $\Gamma$ never vanishes on the considered constraint set. To that end, let $\bu\in[0, Q]^K$ and $\bu'\in\R_{\geq 0}^K$ with $\bu'\neq \bu$. We distinguish two cases: either $\|\bu'\|_2 \leq B$ or $\|\bu'\|_2> B$ with $B := 4\sqrt{K}Q$. 

\textit{Case $\|\bu'\|_2 \leq B$.} Define $\delta_4:=\min_{i=1,\ldots, K}\eta_i$ and observe that, as $\bu \neq \bu'$, then clearly $\Delta_i>0$ for at least one $i=1,\ldots, k$. We have that
$$
0 <\sum_{i=1}^K \frac{\Delta_i^2}{u_i' + \eta_i} \leq \frac{\|\boldsymbol{\Delta}\|_2^2}{\delta_4},
$$
leading to
$$
\Gamma(\bu,\bu') \geq \frac{\delta_4}{\|\tilde{\bA}\|_2 \|\bu'\|_2 + \delta_2} \geq \frac{\delta_4}{\|\tilde{\bA}\|_2 B  + \delta_2} >0,
$$
concluding the proof of the result. 

\textit{Case $\|\bu'\|_2 > B$.} By equivalence of norms in finite dimension, we have $\|\bu'\|_\infty \geq {\|\bu'\|_2}/{\sqrt{K}} > 4Q$. This implies that the set of indices $T := \{i : u'_i > 2Q\}$ contains at least one element. We denote $T^c$ its complement. Then observe that, for all $i \in T$,
\begin{equation}\label{eq:BRSC_for_KL-5}
   \Delta_i^2 = (u_i' - u_i)^2 \geq (u_i' - Q)^2 \geq \frac14 u_i'^2,
\end{equation}
where we used that $u_i \leq Q$ for all $i=1,\ldots, K$ and $ u_i' > 2Q$ for $i \in T$. It then follows that
$$
\|\boldsymbol{\Delta}\|_2^2 = \sum_{i \in T} \Delta_i^2 + \sum_{i \in T^c} \Delta_i^2 \geq \frac14  \sum_{i \in T} u_i'^2 = \frac14\left(\|\bu'\|_2^2 - \sum_{i \in T^c} u_i'^2\right).
$$
Then, by definition of $T^c$ we have that, for $i \in T^c$,  $u_i' \leq 2Q$ if and only if $- u_i'^2 \geq -4Q^2$, from which we deduce the bound
$$
\|\boldsymbol{\Delta}\|_2^2  \geq \frac14 \left(\|\bu'\|_2^2 - 4(\#T^c)Q^2 \right) \geq \frac14\left(\|\bu'\|_2^2 - 4KQ^2\right) \geq \frac{3}{16} \|\bu'\|_2^2,
$$
where we used that $(\#T^c) \leq K$ and $-4KQ^2 = - B^2/4  > -\|\bu'\|_2^2/4$. We now analyze the quantity $D(\bu,\bu') := \sum_{i=1}^{K} {\Delta_i^2}/(u_i'+\eta_i)$ involved in the definition of the ratio~$\Gamma$. Again, we split the indices of the sum in $T$ and $T^c$ to get
\begin{itemize}
    \item for all $i \in T$, we have that $\Delta_i^2 = (u_i' - u_i)^2 \leq u_i'^2$ (as $u_i' > 2Q > Q \geq u_i$) and 
    $$
    \sum_{i\in T} \frac{\Delta_i^2}{u_i'+\eta_i} \leq   \sum_{i\in T} \frac{u_i'^2}{u_i' + \eta_i} \leq \sum_{i\in T} u_i' \leq \|\bu'\|_1 \leq \sqrt{K}\|\bu'\|_2.
    $$
    \item for all $i \in T^c$, we have $u_i'+\eta_i \geq u_i'+\delta_4\geq\delta_4$  and $|\Delta_i| \leq u_i' + u_i \leq 3Q$, and
    $$
    \sum_{i\in T^c} \frac{\Delta_i^2}{u_i'+\eta_i} \leq \frac{(\#T^c)9Q^2}{\delta_4} \leq \delta_5 := \frac{9KQ^2}{\delta_4}. 
    $$
\end{itemize}
It then follows that
$$
D(\bu, \bu') \leq \sqrt{K}\|\bu'\|_2 + \delta_5.
$$
Combining all these bounds we obtain that, for all $\bu \in [0,Q]^K$ and $\|\bu'\|_2 > B$,
\begin{align*}
   \Gamma(\bu,\bu') &\geq \frac{3}{16} \frac{\|\bu'\|_2^2}{(\|\tilde{\bA}\|_2 \|\bu'\|_2 + \delta_2)(\sqrt{K}\|\bu'\|_2 + \delta_5)} \\
   &\geq \frac{3}{16} \frac{B^2}{(\|\tilde{\bA}\|_2 B + \delta_2)(\sqrt{K}B + \delta_5)} >0
\end{align*}
where we used the fact that the function $x \mapsto x^2/((ax +b)(cx+d))$ for $a,b,c,d>0$ is increasing on $\R_{\geq 0}$.

In all, we have shown that 
$$
C_K \geq \frac{\delta_1 (1-\delta^-_K)}{\delta_3} \min\left\{ \frac{\delta_4}{\|\tilde{\bA}\|_2 B  + \delta_2}, \frac{3}{16} \frac{B^2}{(\|\tilde{\bA}\|_2 B + \delta_2)(\sqrt{K}B + \delta_5)}\right\} >0
$$
which completes the proof.

\subsection{\texorpdfstring{Proofs of~\Cref{sec:sec4}}{Proofs of Section 4}}\label{sec:proofs32}

The proof of the  results stated in~\Cref{sec:uniquesparse} are based on the combination of~\cref{mainprop} and both \cref{lem:sec2lem1} and~\cref{lem:sec2lem2} presented below. These results generalize those of~\cite[Section 4.2]{Carlsson2020}, in the sense that the squared distance in the right hand side therein is substituted by the symmetric Bregman divergence with respect to $\Psi$.  To do so, we set ourselves within the assumptions of~\cref{thm:sparsest}: assume that $F=G_{\by}(\bA\cdot)$ satisfies \cref{eq:brsc} with respect to $\Psi$, $\cX$, and $K$. The proposition writes as follows.
\begin{proposition}\label{mainprop} For any two points $\bx\in\cX$, $\bx'\in \cC^N$, with $\bx\neq\bx'$, and such that $\|\bx-\bx'\|_0\leq K$, we have that 
\begin{equation}\label{eq:bregsparsity_bound}
       \langle \bz-\bz',\bx-\bx' \rangle\leq\left(1-C_K\right)D_\Psi^{\ \mathrm{symm}}(\bx, \bx'),
\end{equation}
where $\bz$, $\bz'$ are associated to $\bx$ and $\bx'$ as in \cref{eq:z}. 
\end{proposition}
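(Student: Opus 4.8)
The plan is to reduce \cref{eq:bregsparsity_bound} to a direct application of the \cref{eq:brsc} inequality after rewriting the left-hand side in terms of symmetric Bregman divergences. Recall that by \cref{eq:z} we have $\bz = \nabla\Psi(\bx) - \bA^T\nabla G_{\by}(\bA\bx) = \nabla\Psi(\bx) - \nabla F(\bx)$, and likewise $\bz' = \nabla\Psi(\bx') - \nabla F(\bx')$. Hence
$$
\bz - \bz' = \bigl(\nabla\Psi(\bx) - \nabla\Psi(\bx')\bigr) - \bigl(\nabla F(\bx) - \nabla F(\bx')\bigr),
$$
and taking the inner product with $\bx - \bx'$ gives
$$
\langle \bz-\bz',\bx-\bx' \rangle = \langle \nabla\Psi(\bx) - \nabla\Psi(\bx'), \bx - \bx'\rangle - \langle \nabla F(\bx) - \nabla F(\bx'), \bx - \bx'\rangle .
$$

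The next step is to recognize each of these two inner products as a symmetric Bregman divergence: by the definition of $D^{\ \mathrm{symm}}$, the first term equals $D_\Psi^{\ \mathrm{symm}}(\bx,\bx')$ and the second equals $D_F^{\ \mathrm{symm}}(\bx,\bx')$. This requires only that both $\bx$ and $\bx'$ lie in $\mathrm{int}(\dom\Psi)$ and $\mathrm{int}(\dom F)$ respectively, so that the gradients are defined; this holds since $\bx \in \cX \subseteq \mathrm{int}(\dom F)\cap\mathrm{int}(\dom\Psi)$ and, as noted in~\Cref{sec:sec4}, $\bx' \in \cC^N \subseteq \mathrm{int}(\dom F)\cap\mathrm{int}(\dom\Psi)$ in all cases of interest. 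Thus
$$
\langle \bz-\bz',\bx-\bx' \rangle = D_\Psi^{\ \mathrm{symm}}(\bx,\bx') - D_F^{\ \mathrm{symm}}(\bx,\bx').
$$

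Finally, since $\bx \in \cX$, $\bx' \in \cC^N$ with $\bx' \neq \bx$ and $\|\bx - \bx'\|_0 \leq K$, the hypothesis that $F$ satisfies \cref{eq:brsc} with respect to $\Psi$, $\cX$, $K$ and constant $C_K$ yields $D_F^{\ \mathrm{symm}}(\bx,\bx') \geq C_K\, D_\Psi^{\ \mathrm{symm}}(\bx,\bx')$. Substituting this lower bound gives
$$
\langle \bz-\bz',\bx-\bx' \rangle \;\leq\; D_\Psi^{\ \mathrm{symm}}(\bx,\bx') - C_K\, D_\Psi^{\ \mathrm{symm}}(\bx,\bx') \;=\; (1 - C_K)\, D_\Psi^{\ \mathrm{symm}}(\bx,\bx'),
$$
which is exactly \cref{eq:bregsparsity_bound}. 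There is no real obstacle here: the argument is a one-line manipulation once the gradient of $F$ is split off from $\bz$, and the only point requiring (routine) care is checking the domain/differentiability conditions so that \cref{eq:brsc} is applicable at the pair $(\bx,\bx')$ — everything else is definitional.
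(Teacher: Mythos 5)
Your proof is correct and follows exactly the same route as the paper's: split $\bz-\bz'$ into the $\nabla\Psi$ and $\nabla F$ differences, identify the two resulting inner products as $D_\Psi^{\ \mathrm{symm}}$ and $D_F^{\ \mathrm{symm}}$, and apply the~\cref{eq:brsc} inequality. The only (minor) addition is your explicit check of the domain conditions, which the paper leaves implicit.
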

\begin{proof}
As $F$ satisfies \cref{eq:brsc} for $K$, $\Psi$, and $\cX$, there exists a constant $C_K>0$ such that, for any $\bx\in\cX$, $\bx'\in \cC^N$,
$$
\begin{aligned}
\langle\bz-\bz', \bx-\bx'\rangle&= \langle \nabla\Psi(\bx)-\nabla\Psi(\bx'), \bx-\bx'\rangle\\
&\quad-\langle \bA^T\nabla G_{\by}(\bA\bx)-\bA^T\nabla G_{\by}(\bA\bx'), \bx-\bx'\rangle\\
&=D_\Psi^{\ \mathrm{symm}}(\bx, \bx')-D_F^{\ \mathrm{symm}}(\bx,\bx')\\
&\leq\left(1-C_K\right)D_\Psi^{\ \mathrm{symm}}(\bx, \bx'),
\end{aligned}
$$
which is the inequality we were aiming for.
\end{proof}

Note that~\cref{thm:sparsest} and~\cref{thm:uniqueglob} hold for any $C_K>0$. While the case $C_K\geq 1$ admits a straightforward proof, the setting $C_K\in(0, 1)$ is more delicate and requires the following two lemmas, which are thus dedicated exclusively to this regime.
\begin{lemma}\label{lem:sec2lem1}
Let $\bx\in\cC^N$ and consider $\bz$ as in \cref{eq:z}. Assume that, for some $i\in \{1,\ldots, N\}$, we have $z_i\in\partial h_i(x_i)$ and
\begin{equation}\label{eq:condlem1}
|z_i|> \frac{\psi_i'(\alpha_i)- \psi_i'(0)}{C_K} + \psi_i'(0).
\end{equation}
Then, for any $\bx'\in\cC^N$ with $x_i' \neq x_i$ and $z_i'\in\partial h_i(x'_i)$, we have that
$$
(z_i-z_i')(x_i-x_i')> \left(1-C_K\right)d^{\ \mathrm{symm}}_{\psi_i}(x_i, x_i').
$$
\end{lemma}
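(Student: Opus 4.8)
The plan is to collapse the conclusion to a scalar inequality and then dispatch it by inspecting the branches of $\partial h_i$. First I would exploit that we are in the regime $C_K\in(0,1)$ (as stated just before the lemma), so $1/C_K\ge 1$; since $\psi_i'$ is nondecreasing and $\alpha_i>0$ (because $d_{\psi_i}(0,\alpha_i)=\lambda_0>0$), we have $\psi_i'(\alpha_i)\ge\psi_i'(0)\ge 0$ (the last inequality by~\cref{ass:generating_func_psi}~\cref{ass3psi}), hence
$$
\frac{\psi_i'(\alpha_i)-\psi_i'(0)}{C_K}+\psi_i'(0)\ \ge\ \psi_i'(\alpha_i),
$$
so~\cref{eq:condlem1} forces $|z_i|>\psi_i'(\alpha_i)$. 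Matching this against the explicit description of $\partial h_i$ in~\cref{eq:subdiffg_i} (and its $\cC=\R_{\geq 0}$ analogue), the only branch compatible with $|z_i|>\psi_i'(\alpha_i)$ is the last one, so necessarily $|x_i|>\alpha_i$ and $z_i=\psi_i'(x_i)$. When $\cC=\R$, using that $\psi_i'$ is odd and that $\partial h_i(-x)=-\partial h_i(x)$, the substitution $(x_i,x_i',z_i,z_i')\mapsto(-x_i,-x_i',-z_i,-z_i')$ leaves both sides of the claimed inequality invariant, so I may assume $x_i>0$. Summarizing: in all cases $x_i>\alpha_i>0$, $z_i=\psi_i'(x_i)>0$, and~\cref{eq:condlem1} rearranges to $C_Kz_i>\psi_i'(\alpha_i)-(1-C_K)\psi_i'(0)$.

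Next I would rewrite the target. Since $d^{\ \mathrm{symm}}_{\psi_i}(x_i,x_i')=(\psi_i'(x_i)-\psi_i'(x_i'))(x_i-x_i')$ and $z_i=\psi_i'(x_i)$, a one-line computation shows that the inequality to be proved is equivalent to $\mathcal{B}\,(x_i-x_i')>0$, where $\mathcal{B}:=C_Kz_i+(1-C_K)\psi_i'(x_i')-z_i'$. I then split on the position of $x_i'$. If $|x_i'|>\alpha_i$, then $z_i'=\psi_i'(x_i')$, so $\mathcal{B}=C_K(\psi_i'(x_i)-\psi_i'(x_i'))$ and $\mathcal{B}\,(x_i-x_i')=C_K(\psi_i'(x_i)-\psi_i'(x_i'))(x_i-x_i')>0$ by strict monotonicity of $\psi_i'$ together with $x_i\neq x_i'$ and $C_K>0$.

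If instead $|x_i'|\le\alpha_i$, then $x_i'\le\alpha_i<x_i$, hence $x_i-x_i'>0$ and it suffices to show $\mathcal{B}>0$. Here I would go through the possibilities for $z_i'\in\partial h_i(x_i')$ — namely $z_i'=\psi_i'(\alpha_i)$ when $x_i'>0$, $z_i'\in[-\psi_i'(\alpha_i),\psi_i'(\alpha_i)]$ when $x_i'=0$, $z_i'=-\psi_i'(\alpha_i)$ when $x_i'<0$, and only the first when $\cC=\R_{\geq 0}$ — and check in each of them the uniform lower bound $(1-C_K)\psi_i'(x_i')-z_i'\ \ge\ (1-C_K)\psi_i'(0)-\psi_i'(\alpha_i)$, using $\psi_i'(\alpha_i)\ge\psi_i'(x_i')\ge\psi_i'(0)$ when $x_i'\ge 0$ and $-\psi_i'(\alpha_i)\le\psi_i'(x_i')<0$ when $x_i'<0$ (consequences of monotonicity / oddness of $\psi_i'$). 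This yields $\mathcal{B}\ge C_Kz_i-\bigl(\psi_i'(\alpha_i)-(1-C_K)\psi_i'(0)\bigr)>0$ by the rearranged hypothesis from the first step, which closes the argument.

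The main obstacle is precisely the case $|x_i'|\le\alpha_i$: one must not bound $(1-C_K)\psi_i'(x_i')$ and $-z_i'$ by their separate worst values, since the sum of those crude bounds is negative and the estimate fails. The bound only closes because $\partial h_i$ ties $z_i'$ to $\mathrm{sign}(x_i')$ — $z_i'$ attains $\psi_i'(\alpha_i)$ only where $\psi_i'(x_i')\ge\psi_i'(0)\ge 0$, and $z_i'=-\psi_i'(\alpha_i)$ only where $\psi_i'(x_i')\ge-\psi_i'(\alpha_i)$ — and recording that correlation is exactly what the short branch-by-branch check does. Everything else is routine algebra.
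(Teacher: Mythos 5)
Your proof is correct and follows essentially the same route as the paper's: reduce by oddness to $x_i>0$, deduce $x_i>\alpha_i$ and $z_i=\psi_i'(x_i)$, and then split on the position of $x_i'$; your quantity $\mathcal{B}(x_i-x_i')$ is an algebraic repackaging of the paper's ratio $Q:=(z_i-z_i')/(\psi_i'(x_i)-\psi_i'(x_i'))$, with the target $\mathcal{B}(x_i-x_i')>0$ equivalent to $Q>1-C_K$. One small advantage of your formulation is that the uniform lower bound on $(1-C_K)\psi_i'(x_i')-z_i'$ merges the sub-cases $x_i'=0$, $0<x_i'\le\alpha_i$, and $-\alpha_i\le x_i'<0$ into a single monotonicity estimate, and in particular you explicitly treat the negative branch $-\alpha_i\le x_i'<0$ (when $\cC=\R$), which the paper's case split $\{x_i'=0\}\cup\{0<x_i'\le\alpha_i\}\cup\{x_i'>\alpha_i\}$ silently omits.
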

Notice that the notation $d^{\ \mathrm{symm}}_{\psi}$ simply refers to the one-dimensional symmetric Bregman divergences with respect to $\psi$.
\begin{proof} Let $i\in \{1,\ldots, N\}$ be such that \cref{eq:condlem1} holds. Looking at the definition of $\partial h_i$ in \cref{eq:subdiffg_i}, we see that it is an odd map, and therefore it is enough to consider the case in which $x_i, z_i\geq 0$. Combining this with \cref{eq:condlem1},  $C_K<1$, and $\psi_i'(\alpha_i) > \psi'_i(0)$ ($\psi_i'$ increasing), we get that
$$
z_i> \frac{\psi_i'(\alpha_i)- \psi_i'(0)}{C_K} + \psi_i'(0)> \psi_i'(\alpha_i)
$$ 
Then, necessarily $z_i=\psi'_i(x_i)$ and so $x_i>\alpha_i$. We next aim to minimize the quotient
$$
Q:=\frac{(z_i-z_i')(x_i-x_i')}{d_{\psi_i}^{\ \mathrm{symm}}(x_i, x_i')}= \frac{z_i - z_i'}{\psi_i'(x_i) - \psi_i'(x_i')},
$$
and show that it is greater than $1-C_K$. To do so, and motivated by the expression of the subgradient given in \cref{eq:subdiffg_i}, we distinguish three cases. If $x_i'=0$, then by \cref{eq:subdiffg_i} we know that $z_i'\in[-\psi'_i(\alpha_i), \psi'_i(\alpha_i)]$. Then,
$$
Q=\frac{z_i-z_i'}{\psi_i'(x_i)-\psi'_i(0)}\geq\frac{z_i-\psi_i'(\alpha_i)}{z_i-\psi'_i(0)}\geq 1+\frac{\psi'_i(0)-\psi'_i(\alpha_i)}{z_i-\psi_i'(0)},
$$
and the term in the right hand side is greater than $1-C_K$ if and only if \cref{eq:condlem1} is satisfied. Next, if $0<x_i'\leq\alpha_i$, we get that $z_i'=\psi_i'(\alpha_i)$ and so 
$$
Q=\frac{z_i - \psi_i'(\alpha_i)}{z_i - \psi_i'(x_i')}.
$$
Now, we fix $x'_i$ and observe that, as a function of $z_i$, $Q$ is minimized for the smallest admissible $z_i$, i.e., $z_i=(\psi_i'(\alpha_i)- \psi_i'(0))/C_K + \psi_i'(0)$. Then,
$$
Q>\frac{(\psi_i'(\alpha_i)-\psi'_i(0))/C_K+\psi_i'(0)-\psi'_i(\alpha_i)}{(\psi_i'(\alpha_i)-\psi'_i(0))/C_K+\psi_i'(0) - \psi_i'(x_i')},
$$
where both the numerator and the denominator are positive because, as $C_K<1$, we have that $(\psi_i'(\alpha_i)-\psi'_i(0))/C_K\geq \psi_i'(\alpha_i)-\psi'_i(0)>0$ and,  as $\psi'_i$ is increasing and $0<x'_i\leq\alpha_i$, that $(\psi_i'(\alpha_i)-\psi'_i(0))/C_K \geq \psi_i'(x_i')-\psi'_i(0)>0$. This also implies that $(\psi_i'(\alpha_i)-\psi'_i(0))/C_K > (\psi_i'(\alpha_i)-\psi'_i(0))/C_K +\psi'_i(0)- \psi_i'(x_i')>0$. If we plug the latter into the above lower bound for $Q$, we conclude that
$$
Q> \frac{(\psi_i'(\alpha_i)-\psi'_i(0))/C_K+\psi_i'(0)-\psi'_i(\alpha_i)}{(\psi_i'(\alpha_i)-\psi'_i(0))/C_K}= 1- C_K.
$$
Finally, if $x_i'>\alpha_i$, then $z_i'=\psi_i'(x_i')$ leading to $z_i'-z_i=\psi_i'(x_i')-\psi_i'(x_i)$ and $Q=1$. As $C_K<1$, we conclude.
\end{proof}

\begin{lemma}\label{lem:sec2lem2}
Let $\bx\in \cC^N$ and consider $\bz$ as in \cref{eq:z}. Assume that, for some $i \in \{1,\ldots, N\}$, we have $z_i\in\partial h_i(x_i)$ and
\begin{equation}\label{eq:condlem2}
|z_i|< C_K(\psi_i'(\alpha_i)-\psi_i'(0)) + \psi_i'(0).
\end{equation}
Then, for any $\bx'\in\cC^N$ with $x_i' \neq x_i$ and $z_i'\in\partial h_i(x_i')$, we have that
$$
(z_i-z_i')(x_i-x_i')> \left(1-C_K\right)d^{\ \mathrm{symm}}_{\psi_i}(x_i, x_i').
$$
\end{lemma}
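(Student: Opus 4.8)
The plan is to run the same argument as in the proof of \cref{lem:sec2lem1}, with the roles of the ``flat'' and the unbounded branches of $\partial h_i$ interchanged. The first step is to reduce to $\cC=\R$: when $\cC=\R_{\geq 0}$, the subdifferential formula for $h_i$ shows that $\partial h_i$ only takes values in $[\psi_i'(\alpha_i),+\infty)$, and since $\psi_i$ is strictly convex with $\alpha_i>0$ we have $\psi_i'(\alpha_i)>\psi_i'(0)$, hence $C_K(\psi_i'(\alpha_i)-\psi_i'(0))+\psi_i'(0)<\psi_i'(\alpha_i)\leq z_i=|z_i|$ for any $C_K<1$; thus \cref{eq:condlem2} can never hold and there is nothing to prove. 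So assume $\cC=\R$, where $\psi_i'(0)=0$ by oddness and \cref{eq:condlem2} reads $|z_i|<C_K\psi_i'(\alpha_i)<\psi_i'(\alpha_i)$. Since the map $\partial h_i$ in \cref{eq:subdiffg_i} is odd, negating $(x_i,z_i,x_i',z_i')$ if necessary — which changes neither $(z_i-z_i')(x_i-x_i')$ nor $d^{\ \mathrm{symm}}_{\psi_i}(x_i,x_i')$ — we may assume $z_i\geq 0$.

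The key observation, dual to the fact that \cref{eq:condlem1} forces $x_i>\alpha_i$ in \cref{lem:sec2lem1}, is that \cref{eq:condlem2} forces $x_i=0$: indeed, if $x_i\neq 0$ then \cref{eq:subdiffg_i} gives either $|z_i|=\psi_i'(\alpha_i)$ (when $0<|x_i|\leq\alpha_i$) or $|z_i|=|\psi_i'(x_i)|>\psi_i'(\alpha_i)$ (when $|x_i|>\alpha_i$), contradicting $|z_i|<\psi_i'(\alpha_i)$. With $x_i=0$ one computes $d^{\ \mathrm{symm}}_{\psi_i}(0,x_i')=\psi_i'(x_i')\,x_i'>0$ (both factors share the sign of $x_i'$, as $\psi_i'$ is strictly increasing through the origin), so exactly as in \cref{lem:sec2lem1} it suffices to show that
$$
Q \;:=\; \frac{(z_i-z_i')(x_i-x_i')}{d^{\ \mathrm{symm}}_{\psi_i}(x_i,x_i')} \;=\; \frac{z_i'-z_i}{\psi_i'(x_i')} \;>\; 1-C_K,
$$
where $z_i'$ is the single value of $\partial h_i(x_i')$ since $x_i'\neq 0$. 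On the branches $x_i'<0$ one checks directly, using $z_i\geq 0$ and $|\psi_i'(x_i')|\leq\psi_i'(\alpha_i)$ when $-\alpha_i\leq x_i'<0$, that $Q\geq 1>1-C_K$. The substantive case is $x_i'>0$, where the claim becomes $z_i'-z_i>(1-C_K)\psi_i'(x_i')$: on $0<x_i'\leq\alpha_i$ one has $z_i'=\psi_i'(\alpha_i)$, and since $\psi_i'(x_i')\leq\psi_i'(\alpha_i)$ together with $z_i<C_K\psi_i'(\alpha_i)$ we get $z_i'-z_i>(1-C_K)\psi_i'(\alpha_i)\geq(1-C_K)\psi_i'(x_i')$; on $x_i'>\alpha_i$ one has $z_i'=\psi_i'(x_i')$, and since $z_i<C_K\psi_i'(\alpha_i)\leq C_K\psi_i'(x_i')$ we get $z_i'-z_i=\psi_i'(x_i')-z_i>(1-C_K)\psi_i'(x_i')$. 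All these inequalities are strict, so $Q>1-C_K$, which is the desired estimate, and the conclusion $(z_i-z_i')(x_i-x_i')>(1-C_K)d^{\ \mathrm{symm}}_{\psi_i}(x_i,x_i')$ follows since $d^{\ \mathrm{symm}}_{\psi_i}(x_i,x_i')>0$.

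There is no deep difficulty here. The one thing to get right is the localization step — recognizing that \cref{eq:condlem2} pins $x_i$ to the single kink point $0$ of $h_i$, mirroring the way \cref{eq:condlem1} pins it to the strictly convex region $|x_i|>\alpha_i$ in \cref{lem:sec2lem1} — after which the verification is a finite case split. The only mild care needed is in the sign bookkeeping: with $x_i=0$ fixed, the perturbation $x_i'$ may have either sign, so unlike in \cref{lem:sec2lem1} one cannot reduce to $x_i'>0$ once the sign of $z_i$ has been normalized, and the direction of the inequality reverses when dividing through by $\psi_i'(x_i')$ for $x_i'<0$.
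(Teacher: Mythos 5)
Your proof is correct and follows the same strategy as the paper: normalize $z_i\geq 0$ via oddness of $\partial h_i$, deduce from~\cref{eq:condlem2} (together with $C_K<1$) that necessarily $x_i=0$, and then bound the quotient $Q=(z_i'-z_i)/\psi_i'(x_i')$ from below by $1-C_K$ through a case split on~$x_i'$. You are in fact slightly more thorough than the paper's own proof, which after the sign normalization only treats the branches $0<x_i'\leq\alpha_i$ and $x_i'>\alpha_i$, leaving both the $x_i'<0$ branches (which cannot be removed once the sign of $z_i$ has been fixed) and the vacuous constrained case $\cC=\R_{\geq0}$ implicit.
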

\begin{proof} Let $i\in \{1,\ldots, N\}$ be such that \cref{eq:condlem2} holds. As in the proof of~\cref{lem:sec2lem1}, we aim to minimize the quantity
$$
Q:=\frac{z_i'-z_i}{\psi_i'(x_i')-\psi_i'(x_i)}
$$
and show that it is greater than $1-C_K$. For the same reasons as in the proof of~\cref{lem:sec2lem1}, we assume that $x_i,z_i\geq0$. Combining both the fact that $C_K<1$, $\psi_i'(\alpha_i) > \psi'_i(0)$ ($\psi_i'$ increasing), and \cref{eq:condlem2}, we derive that
$$
z_i<C_K(\psi_i'(\alpha_i)-\psi_i'(0)) + \psi_i'(0)<\psi'_i(\alpha_i),
$$
and so, necessarily $x_i=0$. Since by assumption we have $x_i' \neq x_i$, necessarily $x_i' \neq  0$. We then distinguish two cases for $x_i'$: either $0<x_i'\leq \alpha_i$ or $x_i'> \alpha_i$. In the first case, $z_i'=\psi_i'(\alpha_i)$ and, since $\psi_i'$ is increasing, we have that
$$
Q\geq\frac{\psi_i'(\alpha_i)-C_K(\psi_i'(\alpha_i)-\psi_i'(0))-\psi_i'(0)}{\psi_i'(\alpha_i)-\psi_i'(0)}=1-C_K
$$
as desired. 

In the second case ($x_i'> \alpha_i$) we have that $z_i'=\psi_i'(x'_i) \geq \psi_i'(\alpha_i)$ (again $\psi'_i$ increasing) which leads to
$$
Q>\frac{\psi_i'(x'_i)-C_K(\psi_i'(x'_i)-\psi_i'(0))-\psi_i'(0)}{\psi_i'(x_i')-\psi_i'(0)}=1-C_K.
$$
This concludes the proof.
\end{proof}
We are now ready to prove~\cref{thm:sparsest}. 

\subsubsection{\texorpdfstring{Proof of~\cref{thm:sparsest}}{Proof of Theorem 3.1}}\label{proof:thm41}
Let $\bx\in \mathcal{X}$ be a critical point of $J_\Psi$. The proof of this result will be based on applying~\cref{mainprop} backwards; i.e., we will show that for any other critical point $\bx'\in\cC^N$, $\bx\neq\bx'$, of $J_\Psi$, we have
$$
\langle \bz-\bz',\bx-\bx' \rangle > \left(1-C_K\right)D_\Psi^{\ \mathrm{symm}}(\bx, \bx').
$$
Then, by~\cref{mainprop}, this necessarily implies that $\|\bx-\bx'\|_0>K$. We distinguish three cases. $1-C_K<0$, $1-C_K=0$, and $1-C_K>0$.  If $1-C_K<0$, as the right hand side above turns out to be negative, it is enough to show that $\langle \bz-\bz',\bx-\bx' \rangle\geq 0$. As the inner product is separable, it is sufficient to show that $(z_i-z_i')(x_i-x_i')\geq 0$ for all $i=1,\ldots, N$, as summing over $i$ gives the result. As $z_i \in\partial h_i(x_i)$, $z_i' \in\partial h_i(x_i')$ and each $\partial h_i$ is a monotone operator (from convexity of $h_i$), the desired bound holds true by definition. 

Next, if $1-C_K=0$, it suffices to show that $\langle \bz-\bz',\bx-\bx' \rangle > 0$. To do so, and following an element-wise analysis as above, we first point out that, by definition, the $\partial h_i$ are odd maps. Hence, we may assume, without loss of generality, that $x_i, \ x_i'\geq 0$ and $z_i, \ z_i'\geq 0$. As $C_K=1$, \cref{eq:zinotin} implies that $z_i\neq \psi_i'(\alpha_i)$. Then, either $x_i=0$ or $x_i>\alpha_i$. 
\begin{itemize}
    \item If $x_i=0$, as $z_i\in\partial h_i(x_i)$, \cref{eq:subdiffg_i} gives $z_i\in[0, \psi_i'(\alpha_i))$. As $x_i'\neq x_i$ by assumption, we get that $x_i'\neq 0$, which again by \cref{eq:subdiffg_i} gives $z'_i\geq \psi'_i(\alpha_i)$. Hence, $z_i-z_i'<0$ and we get that $(z_i-z_i')(x_i-x_i')=(z_i-z_i')(-x_i')>0$.
    \item If $x_i>\alpha_i$, \cref{eq:subdiffg_i} implies that $z_i=\psi_i'(x_i)$. Now, for each $i=1,\ldots, N$ with $x'_i\neq x_i$, we have either $x_i'\leq \alpha_i$ or $x_i'>\alpha_i$. If $x_i'\leq\alpha_i$, we get directly that $x'_i<x_i$ and \cref{eq:subdiffg_i} gives $z_i'\leq\psi'_i(\alpha_i) < \psi'_i(x_i) = z_i$ (again using $\psi'_i$ increasing). Hence, $(z_i-z_i')(x_i-x_i')> 0$. Finally, if $x_i'>\alpha_i$,  we have that $z_i' = \psi_i'(x_i')$ and either $x_i > x_i'$ or viceversa. Then, from the strict convexity of $\psi_i$, $(z_i-z_i')(x_i-x_i') = (\psi_i'(x_i) - \psi_i'(x_i'))(x_i-x_i')> 0$. Since the case $x_i'=x_i$ gives $(z_i-z_i')(x_i-x_i')=0$, this term does not contribute to the sum over $i$.
\end{itemize}
This concludes this part of the proof.

Finally, if $1-C_K>0$ we have, by both~\cref{lem:sec2lem1} and~\cref{lem:sec2lem2}, that $(z_i-z_i')(x_i-x_i')>(1-C_K)d^{\ \mathrm{symm}}_{\psi_i}(x_i, x_i')\geq 0$ for all $i$ such that $x'_i\neq x_i$, since the case $x_i'=x_i$ gives $(z_i-z_i')(x_i-x_i')=0$ as above. With this, we conclude the proof of the theorem. 

\subsubsection{\texorpdfstring{Proof of~\cref{thm:uniqueglob}}{Proof of Theorem 3.3}}\label{proof_thm:uniqueglob}

Let $\bx$ be a critical point of $J_\Psi$ satisfying the conditions of the theorem and set $k = \|\bx\|_0$. Assume that $\bx$ is not the unique global minimizer of $J_\Psi$. Hence, there exists a local minimizer $\bx'\in \cC^N$, $\bx' \neq \bx$, such that $J_\Psi(\bx')\leq J_\Psi(\bx)$. Then by \cite[Proposition 10]{Essafri2024}, we have that $J_\Psi(\bx')=J_0(\bx')$. Moreover, for the critical point $\bx$ we have $J_\Psi(\bx) \leq J_0(\bx)$ as $B_\Psi \leq \lambda_0 \|\cdot\|_0$ by definition. Finally, by~\cref{thm:sparsest}, we have that
$$
K<\|\bx'-\bx\|_0\leq \|\bx'\|_0+\|\bx\|_0=\|\bx'\|_0+k,
$$
and so necessarily $\|\bx'\|_0\geq K-k+1$. 

Combining all these inequalities, it follows
\begin{align*}
    J_\Psi(\bx')-J_\Psi(\bx)& \geq  J_0(\bx')-J_0(\bx) \\
    & \geq G_\by(\bA\bx') + \lambda_0(K-k+1) - G_\by(\bA\bx)  - \lambda_0 k \\
    & \geq \lambda_0(K-2k+1) - G_\by(\bA\bx) \underset{\cref{eq:uniqglobcond}}{>} 0.
\end{align*}
This implies that $J_\Psi(\bx')>J_\Psi(\bx)$, which contradicts or initial assumption on $\bx'$. Finally, by~\cref{thm:exactrel}, $\bx$ is also the unique global minimizer of $J_0$. 

\subsection{\texorpdfstring{Proofs of~\Cref{sec:oracsol}}{Proofs of section 4}}\label{sec:proofs33}

We provide in this section the proof of~\cref{thm:x'locmin} and~\cref{thm:oracuniquesparsest}. In contrast with the previous section (i.e.~\Cref{sec:proofs32}), extending the results of~\cite[Section 4.3]{Carlsson2020} is much more involved, since their analysis crucially relies on the fact that they have access to a closed-form expression of the oracle solution. As this is not available in our setting, the methods therein must be suitably adapted. We start with the following general lemma, which will be useful in the proof of both results, and is a direct consequence of the so-called descent lemma.

\begin{lemma}\label{lem:coseqdesclemm} Under~\Cref{ass:FID},  there exists a constant $\tilde{L} \geq L$ such that, for any $\mathbf{w} \in \mathrm{Im}(\bA |_{\cC^N})$,
\begin{equation}\label{eq:coseqdesclemm}
    \|\nabla G_\by(\mathbf{w})\|\leq \sqrt{2 \tilde{L} G_\by(\mathbf{w})}.
\end{equation}
In particular, when $\dom G_\by = \R^M$, $\tilde{L} = L$ is valid.
\end{lemma}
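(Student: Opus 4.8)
The plan is to separate two cases according to whether $\dom G_\by$ is all of $\R^M$ or a proper subset, and in both to run the standard argument that extracts a gradient bound from the descent lemma applied to $G_\by$ (which has $L$-Lipschitz gradient by~\cref{ass:fid2}, $G_\by$ being separable with each $g_{y_j}$ having $L$-Lipschitz derivative).

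First I would dispose of the unconstrained case $\dom G_\by = \R^M$, where the point $\mathbf{v} := \mathbf{w} - \tfrac1L \nabla G_\by(\mathbf{w})$ always lies in the domain. The descent lemma then gives $G_\by(\mathbf{v}) \leq G_\by(\mathbf{w}) - \tfrac{1}{2L}\|\nabla G_\by(\mathbf{w})\|^2$, and since $G_\by \geq 0$ (cf.~\cref{ass:FID}) one reads off~\cref{eq:coseqdesclemm} with $\tilde L = L$. This is the textbook fact alluded to in the statement, and uses nothing beyond~\cref{ass:fid2}.

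The substantive case is $\dom G_\by \subsetneq \R^M$, where a full gradient step may leave the domain. Here I would exploit separability, $G_\by(\mathbf{w}) = \sum_{j=1}^M g_{y_j}(w_j)$, and establish a one-dimensional estimate $g_{y_j}'(w)^2 \leq 2\tilde L_j\, g_{y_j}(w)$ for every $w$ in the $j$-th coordinate projection $I_j$ of $\mathrm{Im}(\bA |_{\cC^N})$, separately for each $j$; then summing over $j$ and setting $\tilde L := \max_j \tilde L_j \geq L$ gives~\cref{eq:coseqdesclemm}. Fixing $j$ and writing $g := g_{y_j}$, two ingredients are available. The first is a uniform safe step size: a $\rho_j > 0$ such that $w - t\,\mathrm{sign}(g'(w)) \in \mathrm{int}(\dom g)$ for all $w \in I_j$ and $0 \leq t \leq \rho_j$; this follows from~\cref{ass:fid3} together with the observation that $I_j$ is a closed convex cone in $\R$ (hence $\{0\}$, a half-line, or $\R$) sitting inside the open interval $\mathrm{int}(\dom g)$, so it has strictly positive distance to whichever endpoints of $\mathrm{int}(\dom g)$ are finite. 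The second is a uniform gradient bound $R_j := \sup_{w \in I_j} |g'(w)| < \infty$, which follows from~\cref{ass:fid4} (pick, for each $w \in I_j$, a preimage $\mathbf{w}\in\mathrm{Im}(\bA |_{\cC^N})$ with $w_j = w$ and use $|g'(w)| \leq \|\nabla G_\by(\mathbf{w})\|$).

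With these in hand I would split into two regimes. When $g(w) \leq L\rho_j^2/2$, the descent lemma in the direction $-\mathrm{sign}(g'(w))$ with the optimal step $t = \sqrt{2g(w)/L}$, which is now admissible since $t \leq \rho_j$, together with $g \geq 0$ yields the clean bound $|g'(w)| \leq \sqrt{2Lg(w)}$, exactly as in the unconstrained case (the subcase $g'(w)=0$ being trivial). When instead $g(w) > L\rho_j^2/2$, I would not invoke the descent lemma at all and simply bound $g'(w)^2 \leq R_j^2 < \tfrac{2R_j^2}{L\rho_j^2}\,g(w)$. Taking $\tilde L_j := \max\{L,\ R_j^2/(L\rho_j^2)\}$ then covers both regimes. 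I expect the main obstacle to be precisely this second regime — where $G_\by$ is large, the descent step must be truncated at $\rho_j$, and the naive truncated estimate degrades with $g(w)$ — so the argument genuinely needs the a priori boundedness of $\nabla G_\by$ on $\mathrm{Im}(\bA |_{\cC^N})$ provided by~\cref{ass:fid4}; a secondary, milder point is justifying the uniform $\rho_j$ from~\cref{ass:fid3}, which relies on the conical/linear structure of $\mathrm{Im}(\bA |_{\cC^N})$ rather than on any compactness.
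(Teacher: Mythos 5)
Your proposal is correct and uses the same raw ingredients as the paper — the descent lemma, the non-negativity of $G_\by$, \Cref{ass:fid3} to get a positive margin between $\mathrm{Im}(\bA|_{\cC^N})$ and the boundary of $\dom G_\by$, and \Cref{ass:fid4} to bound the gradient uniformly. But the architecture is genuinely different. The paper never splits on the size of $G_\by(\mathbf{w})$: it fixes a single small step length $\eta < \min\{1/L, \delta/\theta\}$ with
$$
\delta := \inf_{\mathbf{w} \in \mathrm{Im}(\bA |_{\cC^N})} \mathrm{dist}\bigl(\mathbf{w}, \partial (\dom G_\by) \bigr) >0, \qquad
\theta := \sup_{\mathbf{w}\in \mathrm{Im}(\bA |_{\cC^N}) } \|\nabla G_\by(\mathbf{w}) \| < \infty,
$$
notes that the full-dimensional step $\mathbf{w}-\eta\nabla G_\by(\mathbf{w})$ then always lands in $\mathrm{int}(\dom G_\by)$ because $\|\eta\nabla G_\by(\mathbf{w})\|< \delta$, and applies the descent lemma once with that (generally suboptimal) $\eta$ to obtain $\|\nabla G_\by(\mathbf{w})\|_2^2\leq 2(2\eta-L\eta^2)^{-1} G_\by(\mathbf{w})$. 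You instead insist on using the \emph{optimal} step length $\sqrt{2g(w)/L}$, which forces the two-regime dichotomy (optimal step admissible vs.\ not) and, to keep the argument one-dimensional, a coordinate-wise treatment that exploits separability. Your route has the advantage of giving the sharp constant $L$ in the small-value regime and of making the positivity of the margin fully transparent (projections of the closed cone $\mathrm{Im}(\bA|_{\cC^N})$ onto coordinates are closed cones in $\R$, hence $\{0\}$, a half-line, or $\R$, so the one-dimensional margin is manifestly positive); the paper's approach is shorter because it never needs to distinguish regimes. The resulting constants $\tilde L$ differ, but both are explicit and only require the same two hypotheses. One small point worth tightening in your write-up: you invoke the projection $I_j$ as a closed set without comment — this is indeed true here (linear or non-negative-combination images of $\R$ or $\R_{\geq0}$ in $\R$ are $\{0\}$, a closed half-line, or $\R$), but the fact that projections of closed cones are closed is not automatic in general and deserves the one-line justification you already have in mind.
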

\begin{proof}
Since $G_\by$ has $L$-Lispchitz gradient by assumption, it satisfies the descent lemma \cite[Theorem 18.15 (iii)]{BCombettes}, which writes, for any $\mathbf{w},\bz \in \mathrm{int}(\dom G_\by)$, 
$$
G_\by(\bz)\leq G_\by(\mathbf{w})+\nabla G_\by(\mathbf{w})^T(\bz-\mathbf{w})+\frac{L}{2}\|\bz-\mathbf{w}\|_2^2.
$$
Now, we claim that there exists $\eta >0$ such that, for all $\mathbf{w}\in\mathrm{int}(\dom G_\by)$, $\bz:= \mathbf{w}-\eta\nabla G_\by(\mathbf{w}) \in \mathrm{int}(\dom G_\by)$. Then we derive that
\begin{align*}
    0\leq G_\by(\mathbf{w}-\eta\nabla G_\by(\mathbf{w}))& \leq G_\by(\mathbf{w})-\eta\|\nabla G_\by(\mathbf{w})\|_2^2+\frac{L\eta^2}{2}\|\nabla G_\by(\mathbf{w})\|_2^2, \\
    & = G_\by(\mathbf{w}) -  \left(\eta-\frac{L\eta^2}{2} \right)\|\nabla G_\by(\mathbf{w})\|_2^2
\end{align*}
where we note that the left hand side above is non-negative by assumption. Taking $\eta < \tfrac2L$ and rearranging the terms, we conclude that
\begin{equation}\label{eq:proof_coseqdesclemm-1}
    \|\nabla G_\by(\mathbf{w})\|_2^2\leq  2\left(2\eta-L\eta^2 \right)^{-1} G_\by(\mathbf{w}).
\end{equation}
In particular, one can observe that the concave quadratic function $\eta \mapsto 2\eta-L\eta^2$ attains its maximum at $\eta = \tfrac1L$, which would be the choice leading to the best (smallest) bound. 

To complete the proof, it remains to prove the claim; i.e., that there exists $\eta \in (0, \tfrac1L]$ such that $\bz = \mathbf{w}-\eta\nabla G_\by(\mathbf{w}) \in \mathrm{int}(\dom G_\by)$  for all $\mathbf{w}\in\mathrm{int}(\dom G_\by)$. While this is trivial when $\dom G_\by = \R^M$ (all $\eta$ are fine), more care is needed when $\dom G_\by \subset \R^M$.  Let $\mathbf{w}\in\mathrm{int}(\dom G_\by)$. First, from \Cref{ass:fid3} of \Cref{ass:FID}, we can define a uniform margin,
$$
\delta := \inf_{\mathbf{w} \in \mathrm{Im}(\bA |_{\cC^N})} \mathrm{dist}(\mathbf{w}, \partial (\dom G_\by) ) >0
$$
where $\partial (\dom G_\by) $ denotes the boundary of the domain of $G_\by$. Additionally, from \Cref{ass:fid4} of \Cref{ass:FID} we have $\theta := \sup_{\mathbf{w}\in \mathrm{Im}(\bA |_{\cC^N}) } \|\nabla G_\by(\mathbf{w}) \| < \infty$. Taking $\eta < \tfrac{\delta}{\theta}$ leads to
$$
\|\bz - \mathbf{w} \|=\|\eta\nabla G_\by(\mathbf{w})\| < \frac{\delta}{\theta} \|\nabla G_\by(\mathbf{w})\| \leq \delta,
$$
which in turn implies that  $\bz \in\mathrm{int}( \dom G_\by)$ since, if $\bz \notin\mathrm{int}( \dom G_\by)$, then necessarily $\|\bz - \mathbf{w} \|\geq \delta$. Hence, we have shown that there exists $\eta < \min\left\{\frac1L, \frac{\delta}{\theta} \right\}$ such that~\eqref{eq:proof_coseqdesclemm-1} holds true, which can be equivalently expressed as the existence of $\tilde{L} \geq L$ such that~\eqref{eq:coseqdesclemm} holds. The fact that $\tilde{L} = L$ is valid when $\dom G_\by = \R^M$ is direct from the above derivations.
\end{proof}


We are now ready to prove~\cref{thm:x'locmin}.

\subsubsection{\texorpdfstring{Proof of~\cref{thm:x'locmin}}{Proof of Theorem 4.4}}\label{sec:proofxorlocmin}

As in the proof sketch provided in the main text, we recall, from \cite[Proposition 10]{Essafri2024}, that a local minimizer $\bx$ of $J_0$ is preserved by $J_\Psi$ if and only if
\begin{enumerate}[label=(\roman*),leftmargin=*]
\item \label{cond:suppcomps-proof} for any $i\in \sigma(\bx)$, $|x_i|>\alpha_i$,
\item \label{cond:offsuppcomps-proof} for any $i\in{\sigma}(\bx)^c$, $-\langle \ba_i, \nabla G_\by(\bA\bx)\rangle\in [\ell_i^-, \ell_i^+]$,
\end{enumerate}
where, as already mentioned, $\ell_i^+:=\psi'_i(\alpha_i)-\psi'_i(0)$ and either $\ell_i^-:=-\psi'_i(\alpha_i)-\psi'_i(0)$ if $\cC=\R$ or $\ell_i^-=-\infty$ if $\cC=\R_{\geq 0}$. Then, the proof of the result will be divided into two  parts. First, we show that~\cref{eq:onsup} is sufficient for~\cref{cond:suppcomps-proof} to hold true at $\bx^{\mathrm{or}}$. Second, we will show that, if~\cref{eq:offsup} holds, then the off-support~\cref{cond:offsuppcomps-proof} is satisfied at $\bx^{\mathrm{or}}$.

Let $S$ be a safe oracle region and assume that $S\cap\Omega= \emptyset$. We start by showing that $|x^\mathrm{or}_i|>\alpha_i$ for each $i\in\sigma(\bx^\mathrm{or})$. We recall that, by definition, $\bx^\mathrm{or}=Z_{\sigma^*}(\bu^\mathrm{or})$, with $\bu^\mathrm{or}$ given by \cref{eq:oracsol}. We aim to show that $|x^\mathrm{or}_{\sigma^*[j]}|=|u^\mathrm{or}_j|>\alpha_{\sigma^*[j]}$ for each $j=1,\ldots, k^*$, and we proceed by contradiction; i.e., we assume that there exists $j_0\in\{1,\ldots, k^*\}$ such that $|u^\mathrm{or}_{j_0}|\leq\alpha_{\sigma^*[j_0]}$. Then,  $ \bu^\mathrm{or}\in\Omega_{\sigma^*[j_0]}\subset \Omega$. As we know by construction that $ \bu^\mathrm{or}\in S$, we have shown that $ \bu^\mathrm{or}\in S\cap\Omega$, a contradiction with the assumption $S\cap\Omega = \emptyset$. Finally, the fact that $\sigma(\bx^\mathrm{or})=\sigma^*$ is a direct consequence of what we have just proved, as we eliminate the possibility of having a component $i\in\sigma^*$ with $x^\mathrm{or}_i=0$. This concludes the first part of the proof.

We now show that \cref{eq:offsup} is a sufficient condition for  \Cref{cond:offsuppcomps-proof} above to be satisfied at $\bx^{\mathrm{or}}$. Notice that, by the Cauchy--Schwartz inequality, we have, for all $i\in{\sigma^*}^c$, that
$$
\begin{aligned}
    |\langle \ba_i, \nabla G_\by(\bA\bx^{\mathrm{or}})\rangle|&\leq \|\ba_i\|_2\|\nabla G_\by(\bA\bx^{\mathrm{or}})\|_2\\
    &\leq \|\ba_i\|_2\sqrt{2\tilde L G_\by(\bA\bx^{\mathrm{or}})}\\
    &= \|\ba_i\|_2\sqrt{2\tilde L F(\bx^{\mathrm{or}})}\\
    &\leq \|\ba_i\|_2\sqrt{2\tilde L F(\bx^*)},
\end{aligned}
$$
where we applied~\cref{lem:coseqdesclemm} and where the last inequality follows by the definition of $\bx^{\mathrm{or}}$. Now, we know by~\cref{eq:offsup} that, for all $i\in{\sigma^*}^c$,
\begin{equation}\label{eq:thm39comb2}
\|\ba_i\|_2\sqrt{2\tilde L F(\bx^*)}\leq \psi'_i(\alpha_i)-\psi'_i(0) \leq \psi'_i(\alpha_i)+\psi'_i(0).
\end{equation}
Using both of the estimates derived in \cref{eq:thm39comb2}, we conclude that
$$
- \langle \ba_i, \nabla G_\by(\bA\bx^{\mathrm{or}})\rangle\leq \|\ba_i\|_2\sqrt{2\tilde L F(\bx^*}) \leq \psi'_i(\alpha_i)-\psi'_i(0)=\ell_i^+,
$$
and
$$
- \langle \ba_i, \nabla G_\by(\bA\bx^{\mathrm{or}})\rangle\geq -\|\ba_i\|_2\sqrt{2\tilde L F(\bx^*)} \geq -\psi'_i(\alpha_i)-\psi'_i(0),
$$
for all $i\in{\sigma^*}^c$, where the term in the right hand side $-\psi'_i(\alpha_i)-\psi'_i(0)$ is precisely $\ell_i^-$ when $\cC=\R$ and, when $\cC=\R_{\geq 0}$, we have $-\psi'_i(\alpha_i)-\psi'_i(0)>\ell_i^-=-\infty$, concluding. 

\subsubsection{\texorpdfstring{Proof of~\cref{thm:oracuniquesparsest}}{Proof of Theorem 4.6}}\label{proof:oracuniquesparsest}

As already mentioned in the sketch, the proof of this result will consist in combining~\cref{thm:uniqueglob} and~\cref{thm:x'locmin}. To do so, we will distinguish two cases: when $C_K>1$ and when $C_K\leq 1$. 

\emph{The case $C_K>1$.} First, the required BRSC condition of~\cref{thm:x'locmin} at $\bx^{\mathrm{or}}$ holds by~\cref{ass:BRSCor}. Now, observe that we can directly apply~\cref{thm:x'locmin}, as in this case we ask for conditions~\cref{eq:onsup} and~\cref{eq:offsup} to hold true. Then, $\bx^{\mathrm{or}}$ is a local minimizer (and so a critical point) of $J_\Psi$ with $\sigma(\bx^\mathrm{or})=\sigma^*$. Next, as $C_K>1$, Condition~\cref{eq:zinotin} reduces to $|z_i| \notin \emptyset$ and is thus trivially satisfied. Finally, setting
\begin{equation}\label{eq:lamb0lowboundmaintheorem}
    \lambda_0 > \frac{F(\bx^*)}{1+K-2k^*} \;\underset{\text{Def of }\bx^{\mathrm{or}}}{\geq } \; \frac{F(\bx^{\mathrm{or}})}{1+K-2k^*} 
\end{equation}
completes the requirements of~\cref{thm:uniqueglob}, and concluding this part of the result.

\emph{The case $C_K\leq 1$.} We start by noting that, as in the above case, the required BRSC condition of~\cref{thm:x'locmin} at $\bx^{\mathrm{or}}$ holds by~\cref{ass:BRSCor}. Next, we will prove that conditions~\cref{ass1genthm} and~\cref{ass2genthm} imply conditions~\cref{eq:onsup} and~\cref{eq:offsup} of~\cref{thm:x'locmin}. We will first show that $S\cap\Omega=\emptyset$ and we proceed by contradiction. Assume that $S\cap\Omega \neq \emptyset$ and let $\bu\in S\cap\Omega$. In particular $\bu\in\Omega$ and thus there exists $j \in\{1,\ldots,k^*\}$ such that $|u_j|\leq \alpha_{\sigma^*[j]}$. However, as $S\cap\tilde{\Omega}=\emptyset$ by assumption, we have
$$
|\psi_{\sigma^*[j]}'(u_j)|> \rho_j.
$$
 We now distinguish two cases: either $\psi_{\sigma^*[j]}'(u_j)> \rho_j$ or $\psi_{\sigma^*[j]}'(u_j)<- \rho_j$. On the one hand, if $\psi_{\sigma^*[j]}'(u_j)> \rho_j$, we have that, since $C_K\leq1$, 
$$
\psi_{\sigma^*[j]}'(u_j)> \frac{\psi_{\sigma^*[j]}'(\alpha_{\sigma^*[j]})-\psi'_{\sigma^*[j]}(0)}{C_K}+\psi_{\sigma^*[j]}'(0) \geq \psi_{\sigma^*[j]}'(\alpha_{\sigma^*[j]}).
$$

Using the fact that $\psi_{\sigma^*[j]}'$ is increasing we get that $u_j > \alpha_{\sigma^*[j]}$, leading to a contradiction. On the other hand, if $\psi_{\sigma^*[j]}'(u_j)<- \rho_j$, we have that 
$$
\psi_{\sigma^*[j]}'(u_j)<- \frac{\psi_{\sigma^*[j]}'(\alpha_{\sigma^*[j]})-\psi'_{\sigma^*[j]}(0)}{C_K}-\psi_{\sigma^*[j]}'(0) \leq -\psi_{\sigma^*[j]}'(\alpha_{\sigma^*[j]})
$$
using again that $C_K \leq 1$. As before, recalling that $-\psi_{\sigma^*[j]}'(\alpha_{\sigma^*[j]})=\psi_{\sigma^*[j]}'(-\alpha_{\sigma^*[j]})$ by~\cref{ass3psi} in~\cref{ass:generating_func_psi}, and using the fact that $\psi_{\sigma^*[j]}'$ is increasing, we get a contradiction. Hence, we have shown that~\cref{ass1genthm} (i.e., $S \cap \tilde{\Omega} = \emptyset$) implies~\cref{eq:onsup} (i.e., $S \cap \Omega = \emptyset$). Next, notice that~\cref{ass2genthm} directly implies~\cref{eq:offsup} because $C_K\leq 1$. We have thus all the conditions of \cref{thm:x'locmin} showing that $\bx^\mathrm{or}$ is a local minimizer of $J_\Psi$ and $\sigma(\bx^\mathrm{or}) = \sigma^*$.

Last, we will show that conditions~\cref{ass1genthm} and~\cref{ass2genthm} also imply condition~\cref{eq:zinotin} of~\cref{thm:sparsest}. Recalling that $z^\mathrm{or}_i=\psi_i'(x^\mathrm{or}_i)-\langle \ba_i, \nabla G_{\by}(\bA\bx^\mathrm{or})\rangle$, we see that~\cref{eq:zinotin} in~\cref{thm:sparsest} writes
\begin{equation}\label{eq:onsuppgenthm}
|\psi'_i(x^\mathrm{or}_i)|\notin \left[C_K(\psi_i'(\alpha_i)-\psi'_i(0))+\psi_i'(0), \frac{\psi_i'(\alpha_i)-\psi'_i(0)}{C_K}+\psi_i'(0)\right]
\end{equation}
for all $i\in\sigma^*$ (because, by~\cref{eq:zerogradientcond}, $\langle\ba_i, \nabla G_{\by}(\bA\bx^\mathrm{or})\rangle=0$ for all $i\in\sigma(\bx^{\mathrm{or}})=\sigma^*$)  and
\begin{equation}\label{eq:offsuppgenthm}
|\langle \ba_i, \nabla G_{\by}(\bA\bx^\mathrm{or})\rangle -\psi_i'(0)|\notin \left[C_K(\psi_i'(\alpha_i)-\psi'_i(0))+\psi_i'(0), \frac{\psi_i'(\alpha_i)-\psi'_i(0)}{C_K}+\psi_i'(0)\right]
\end{equation}
for all $i\in{\sigma^*}^c$. First, observe that~\cref{ass1genthm} in~\cref{thm:oracuniquesparsest} implies that
$$
|\psi_i'(x^\mathrm{or}_i)|>\frac{\psi_i'(\alpha_i)-\psi'_i(0)}{C_K}+\psi'_i(0)
$$
for each $i\in\sigma^*$, showing that \cref{eq:onsuppgenthm} holds. Now, following the same argument as in the proof above, we have that, by~\cref{lem:coseqdesclemm},
$$
|\langle \ba_i, \nabla G_\by(\bA\bx^{\mathrm{or}})\rangle|\leq \|\ba_i\|_2\sqrt{2\tilde L F(\bx^*)}, \text{ for all } i\in{\sigma^*}^c,
$$
and so, plugging condition~\cref{ass2genthm} to the estimate above we have that, for each $i\in{\sigma^*}^c$,
$$
|\langle \ba_i, \nabla G_\by(\bA\bx^{\mathrm{or}})\rangle|\leq C_K(\psi'_i(\alpha_i) - \psi'_i(0)).
$$
Then, for all $i\in{\sigma^*}^c$ it holds that
$$
\begin{aligned}
|\langle \ba_i, \nabla G_\by(\bA\bx^{\mathrm{or}})\rangle-\psi_i'(0)|&\leq |\langle \ba_i, \nabla G_\by(\bA\bx^{\mathrm{or}})\rangle|+\psi_i'(0)\\
&\leq C_K(\psi'_i(\alpha_i) - \psi'_i(0))+ \psi_i'(0)
\end{aligned}
$$
and \cref{eq:offsuppgenthm} follows. Finally, and as in the case $C_K>1$, condition~\cref{eq:lamb0lowboundmaintheorem} completes the requirements of~\cref{thm:uniqueglob} to be applied, and so, we conclude that $\bx^\mathrm{or}$ is the unique global minimizer of $J_\Psi$ and  any other critical point $\bx'$ of $J_\Psi$, $\bx'\neq\bx^\mathrm{or}$, is such that  $\|\bx'-\bx^\mathrm{or}\|_0>K$.   

\subsection{\texorpdfstring{Proofs of~\Cref{sec:applications}}{Proofs of section 5}}\label{sec:proofsappls}

\subsubsection{\texorpdfstring{Proof of~\cref{coro:general_result_l2}}{Proof of Corollary 5.5}}\label{proof:corolgenL2}

The proof consists in simplifying the conditions of~\cref{thm:oracuniquesparsest} to the present setting.   
First of all, notice that, with $\Psi$ given by~\cref{eq:Psi_l2}, we have that $\alpha_i = \alpha = \sqrt{2\lambda_0/\gamma}$ for all $i$ (see~\cite[Table 3]{Essafri2024}) and $\psi_i'(\alpha_i) = \gamma \alpha_i = \sqrt{2\lambda_0 \gamma}$. We now distinguish the two cases. If $C_K > 1$, then Condition~\cref{eq:onsup} is equivalent to
$$
\sqrt{\frac{2\lambda_0}{\gamma}} < | u_j^* | - \sqrt{\frac{2 F_{\sigma^*}(\bu^*)}{\gamma C_K} }, \; \text{for all } j = 1,\ldots,k^*,
$$
and~\cref{eq:offsup} writes
$$
\sqrt{2\lambda_0 \gamma} >  \| \ba_i \|_2\sqrt{2\tilde LF(\bx^*)},  \, \text{for all } i \in \sigma^{*c},
$$
On the other hand, if $C_K \leq 1$, we get that Condition~\cref{ass1genthm} is equivalent to
$$
\sqrt{\frac{2\lambda_0}{\gamma}} < C_K\left(| u_j^* | - \sqrt{\frac{2 F_{\sigma^*}(\bu^*)}{\gamma C_K} }\right), \; \text{for all } j = 1,\ldots,k^*,
$$
and, finally, that~\cref{ass2genthm} reads
$$ 
\sqrt{2\lambda_0 \gamma} >  \frac{\| \ba_i \|_2\sqrt{2\tilde LF(\bx^*)}}{C_K},  \, \text{for all } i \in \sigma^{*c}.
$$
Isolating $\lambda_0$, recalling that $F_{\sigma^*}(\bu^*) = F(\bx^*)$,
and combining all the above conditions together completes the proof.

\subsubsection{\texorpdfstring{Proof of~\cref{coro:KL_application}}{Proof of Corollary 5.9}}\label{proofcorolKL}

Recall that, with $\Psi$ given by~\cref{eq:PsiKL}, we have that
\begin{equation}\label{eq:alphaKL}
\alpha_i=\frac{-1}{c_i}\left(\frac{\xi}{W(-e^{-\kappa_i})}+\xi\right), \quad \text{for all }  i=1,\ldots, N,
\end{equation}
where $W$ denotes the Lambert function (see e.g.~\cite{Mezo2022}) and $\kappa_i=\lambda_0/(\gamma_i\xi)+1$. We start by computing  $\underline{\Lambda}$. To do so, we recall that the (principal branch of the) Lambert function $W$ satisfies both $W(-e^{-1})=-1$ and $W(0)=0$. Combining this with the fact that it is concave, we have that $W(x)\geq ex$ for all $x \in [-e^{-1},0]$. Then, as $\kappa_i>1$ for all $i=1,\ldots, N$, we have that
$$
 W(-e^{-\kappa_i}) \geq -e^{1-\kappa_i},
$$
which is equivalent to
$$
\frac{\xi}{W(-e^{-\kappa_i})} + \xi \leq -\xi e^{\kappa_i-1} + \xi, \quad \text{for all } i=1,\ldots, N.
$$
Recalling the value of $\alpha_i$ given above, we have that, for all $i=1,\ldots, N$,
$$
\alpha_i \geq \frac{\xi}{c_i}( e^{\kappa_i-1}  - 1) = \frac{\xi}{c_i}( e^{\lambda_0/(\gamma_i\xi)} - 1).
$$
Now, as $\psi_i'(x) = \gamma_i c_i \left(1 - \xi/(c_ix +\xi) \right)$ for all $x\geq 0$ and all $i=1,\ldots, N$, and since $\psi_i$ is strictly convex, we get, for all $i=1,\ldots, N$, that
$$
\psi_i'(\alpha_i) - \psi_i'(0)=\psi_i'(\alpha_i) \geq \psi'_i\left( \frac{\xi}{c_i}( e^{\lambda_0/(\gamma_i\xi)}  - 1) \right) = \gamma_i c_i \left( 1 - e^{-\lambda_0/(\gamma_i\xi)} \right).
$$
Recalling \cref{eq:up_bound_Fxstar}, a sufficient condition for both~\cref{eq:offsup} and~\cref{ass2genthm} to hold is given by
$$
\|\ba_i\|_2 \sqrt{2\tilde Lf(\|\beps\|_\infty, m(\bx^*))} < \min\{C_K, 1\}\gamma_i c_i\left(1 - e^{-\lambda_0/(\gamma_i\xi)} \right), \quad \text{for all } i\in{\sigma^*}^c.
$$
Finally, isolating $\lambda_0$ leads us to
$$
\lambda_0 >\max_{i\in{\sigma^*}^c}\left\{- \gamma_i \xi \log \left(1  - \frac{\|\ba_i\|_2 \sqrt{2\tilde Lf(\|\beps\|_\infty, m(\bx^*))} }{\min\{C_K, 1\} \gamma_i c_i} \right)\right\},
$$
concluding the first part of the result.

Next, we compute $\bar{\Lambda}$. To do so, we will provide sufficient conditions ensuring that $\mathfrak{S}\cap\bar\Omega=\emptyset$, where $\bar\Omega\in\{\Omega,\tilde\Omega\}$ depending on the value of $C_K$. To prove the result, we note that ensuring $\mathfrak{S}\cap\bar\Omega = \emptyset$ is equivalent to showing that every $\bu\in\bar\Omega$ is such that $\bu\notin \mathfrak{S}$. Notice that, by~\cref{propo:oracregKL}, if we provide conditions ensuring that
\begin{equation}\label{eq:necessKL}
\sum_{j=1}^{k^*}\gamma_{\sigma^*[j]}g^{\mathrm{KL}}_1\left(\frac{c_{\sigma^*[j]}u_j^*+\xi}{c_{\sigma^*[j]}u_j+\xi}\right)> \frac{F_{\sigma^*}(\bu^*)}{\xi C_K},
\end{equation}
then this implies that $\bu\notin\mathfrak{S}$, proving the result. To do so, and recalling again \cref{eq:up_bound_Fxstar}, it is enough to ensure that, for a given $j=j_0$, it holds true that (removing the subindices)
\begin{equation}\label{eq:fKL_lower_bound}
g^{\mathrm{KL}}_1\left(\frac{cu^*+\xi}{cu+\xi}\right)> \frac{f(\|\beps\|_\infty, m(\bx^*))}{\gamma \xi C_K}.
\end{equation}
Observe that, by definition, $g_1^{\mathrm{KL}}$ tends to $+\infty$ as long as the argument tends to $0$. Moreover, as it is coercive, it tends to $+\infty$ too whenever the argument tends to $+\infty$. Combining this, we see that its sublevel set at height $f(\|\beps\|_\infty, m(\bx^*))/(\gamma\xi C_K)$ is of the form $[e(\|\beps\|_\infty, m(\bx^*)), E(\|\beps\|_\infty, m(\bx^*))]$, for some $0<e(\|\beps\|_\infty, m(\bx^*))\leq E(\|\beps\|_\infty, m(\bx^*))<+\infty$. Then,~\cref{eq:fKL_lower_bound} is satisfied if
\begin{equation}\label{eq:geomKL}
\frac{cu^*+\xi}{cu+\xi}\geq E(\|\beps\|_\infty, m(\bx^*)).
\end{equation}
Now, we distinguish two cases: $C_K< 1$ and $C_K\geq1$. We start with the case $C_K< 1$, which in turn implies that $\bar\Omega=\tilde\Omega$. Let $\bu\in \tilde\Omega$ and observe that there exists $j_0\in\{1,\ldots, k^*\}$ such that $\psi'_{\sigma^*[j_0]}(u_{j_0})\leq \rho_{\sigma^*[j_0]}$. This is equivalent to (removing the subindices)
\begin{equation}\label{eq:uinOmegatildeKL}
\frac{1}{cu+\xi}\geq\frac{C_K(c\alpha+\xi)-c\alpha}{\xi C_K(c\alpha+\xi)}.
\end{equation}
Now, combining  \cref{eq:geomKL} with~\cref{eq:uinOmegatildeKL}, we get that~\cref{eq:fKL_lower_bound} is satisfied if
$$
\frac{cu^*+\xi}{\xi}\left(1-\frac{c\alpha}{C_K(c\alpha+\xi)}\right)\geq E(\|\beps\|_\infty, m(\bx^*)),
$$
or, equivalently, that
\begin{equation}\label{eq:suffcondupperboundKL}
1-\frac{c\alpha}{C_K(c\alpha+\xi)}\geq E'(\|\beps\|_\infty, m(\bx^*)):= \frac{\xi E(\|\beps\|_\infty, m(\bx^*))}{cu^*+\xi}
\end{equation}
Now, we consider the left hand side above as a function of $\alpha$, 
$$
g_1(\alpha):=\left(1-\frac{c\alpha}{C_K(c\alpha+\xi)}\right),
$$
and notice that it is strictly decreasing on its domain with $\lim_{\alpha \to \infty} g_1(\alpha) = 1 - 1/C_K$. Hence, it admits a well defined inverse on $(1-1/C_K,\infty)$. As $C_K< 1$, we have that $1-1/C_K<0$, and so, since $E'(\|\beps\|_\infty, m(\bx^*))>0$ for all $\|\beps\|_\infty$ and all $m(\bx^*)$, the inverse $g_1^{-1}$ is always well defined. Therefore,~\cref{eq:suffcondupperboundKL} is equivalent to $\alpha\leq g_1^{-1}(\min\{1, E'(\|\beps\|_\infty, m(\bx^*))\})$ and so, plugging the value of $\alpha$, we get that
$$
\frac{-\xi}{c}\left(\frac{1}{W(-e^{-\kappa})}+1\right)\leq g_1^{-1}(\min\{1, E'(\|\beps\|_\infty, m(\bx^*))\}).
$$
Isolating the Lambert function above leads to
\begin{equation}\label{eq:lambertupperbound}
W(-e^{-\kappa})\leq -\frac{\xi}{cg_1^{-1}(\min\{1, E'(\|\beps\|_\infty, m(\bx^*))\})+\xi}.
\end{equation}
Let us define define the function (where we omit the dependence on $\xi$ and $c$ for simplicity)
\begin{equation}\label{eq:h1interpret}
h^1(\|\beps\|_\infty, m(\bx^*)):=-\frac{\xi}{cg_1^{-1}(\min\{1, E'(\|\beps\|_\infty, m(\bx^*))\})+\xi}.
\end{equation} 
Next, notice that the following result holds, see \cite[Theorem 2.3]{Hoorfar2008}): if $x>-1/e$ and $y>1/e$, then $W(x)\leq \log[(x+y)/(1+\log(y))]$. Setting $y=1>1/e$ leads to
\begin{equation}\label{eq:Lambertineq}
    W(x)\leq \log(1+x), \quad \text{for all } x>-1/e.
\end{equation}
Combining~\cref{eq:lambertupperbound} with~\cref{eq:h1interpret} and~\cref{eq:Lambertineq}, it suffices now to show that
$$
\log(1-e^{-\kappa})\leq h^1(\|\beps\|_\infty, m(\bx^*)).
$$
Notice that the above inequality is well-posed: first, $h^1$ is negative by definition. On the other hand, the left hand side $\log(1-e^{-\kappa})$ is negative too since, as $\kappa>1$, we have $e^{-\kappa}<1$ and $\log(1 - x ) <0$ for all $x \in [0,1)$. As the exponential function is increasing, the above is equivalent to
$$
e^{-\kappa}\geq 1-e^{h^1(\|\beps\|_\infty, m(\bx^*))}
$$
or, in other words, that
$$
\kappa=\frac{\lambda_0}{\gamma\xi}+1\leq -\log\left(1-e^{ h^1(\|\beps\|_\infty, m(\bx^*))}\right)
$$
Recovering the dependence of $h^1$ on $i$ through the $c_i's$ and $\gamma_i$'s, and isolating $\lambda_0$ above, we have that it suffices that
$$
\lambda_0\leq \min_{i\in\sigma^*}\left\{-\gamma_i\xi\left(1+\log\left(1-e^{ h_i^1(\|\beps\|_\infty, m(\bx^*))}\right)\right)\right\}.
$$
By taking $h_i:=h_i^1$ for all $i\in\sigma^*$, we conclude that~\cref{eq:upperlambKL} provides a sufficient condition ensuring that \cref{eq:necessKL} holds. Hence, $\bu\notin\mathfrak{S}$ as desired. To finish this part, we look at the variations of and limits of $h_i^1$ for all $i\in\sigma^*$. Removing the dependence on $i$, we first get from the variations and limits of $f$ that $E(\|\beps\|_\infty, m(\bx^*))$ decreases toward $1$ whenever $\|\beps\|_\infty \to 0$ or $m(\bx^*) \to \infty$ (given that $g_1^{\mathrm{KL}}(1)=0$). Then, $E'(\|\beps\|_\infty, m(\bx^*))$ decreases toward $\xi / (cu^* +\xi)$ (resp. $0$) when $\|\beps\|_\infty \to 0$ (resp. $m(\bx^*) \to \infty$), and so
$g_1^{-1}(\min\{1,E'(\|\beps\|_\infty, m(\bx^*))\})$ increases towards
\begin{itemize}
    \item[$\triangleright$]  $g_1^{-1}\left(\xi /(cu^* + \xi)\right) = C_K \xi u^* /(c u^*(1-C_K) + \xi)$ when $\|\beps\|_\infty \to 0$,
    \item[$\triangleright$]  $g_1^{-1}(0) = C_K \xi /(c(1-C_K))$ when $m(\bx^*) \to \infty$.
\end{itemize}
Hence, $h^1(\|\beps\|_\infty, m(\bx^*))$ increases toward
  \begin{itemize}
    \item[$\triangleright$] $(c u^*(C_K-1)-\xi)/(cu^* + \xi)$ when $\|\beps\|_\infty \to 0$,
    \item[$\triangleright$] $C_K -1$ when $m(\bx^*) \to \infty$.
\end{itemize}

To complete the proof, we deal with the case $C_K\geq 1$, where $\bar\Omega=\Omega$. Let $\bu\in\Omega$. Then, there exists $j_0\in\{1,\ldots, k^*\}$ such that $0\leq u_{j_0}\leq \alpha_{\sigma^*[j_0]}$. This condition is equivalent to (removing the subindices),
$$
cu+\xi\leq c\alpha+\xi.
$$
As we did in the case above, if we combine the latter bound with \cref{eq:geomKL}, it suffices that
$$
\frac{cu^*+\xi}{c\alpha+\xi}\geq E(\|\beps\|_\infty,m(\bx^*)),
$$
or, equivalently, that
$$
g_2(\alpha):=\frac{1}{c\alpha+\xi}\geq E''(\|\beps\|_\infty,m(\bx^*)):=\frac{E(\|\beps\|_\infty,m(\bx^*))}{cu^*+\xi}
$$
where $g_2$ is strictly decreasing for all $\alpha\geq 0$  with $g_2(0) = 1/\xi$, and $\lim_{\alpha \to \infty} g_2(\alpha) = 0$. Hence, it admits a well defined inverse on $(0, 1/\xi)$. Then, the above is equivalent to 
$\alpha \leq g_2^{-1}\left(\min\{1/\xi, E''(\|\beps\|_\infty,m(\bx^*)) \}\right)$. Analogously to the previous case, we define
\begin{equation}\label{eq:h2interpret}
h^2(\|\beps\|_\infty,m(\bx^*)):=-\frac{\xi}{cg_2^{-1}\left(\min\{1/\xi, E''(\|\beps\|_\infty,m(\bx^*))\}\right)+\xi},
\end{equation}
and proceed in the same way to isolate $\lambda_0$.
Recovering the dependence of $h^2$ on $i$ through the $c_i$'s and $\gamma_i$'s, and taking $h_i:=h_i^2$ for all $i\in\sigma^*$, leads to the desired result.
To conclude, we look at the variations of and limits of $h_i^2$ for all $i\in\sigma^*$. Removing the dependence on $i$ and recalling the behaviour of $E(\|\beps\|_\infty, m(\bx^*))$, we have that $E''(\|\beps\|_\infty, m(\bx^*))$ decreases towards $(cu^* +\xi)^{-1}$ (resp. $0$) when $\|\beps\|_\infty \to 0$ (resp. $m(\bx^*) \to \infty$). Then, $g_2^{-1}(\min\{1/\xi,E''(\|\beps\|_\infty, m(\bx^*))\})$ increases towards
\begin{itemize}
    \item[$\triangleright$] $g_2^{-1}\left(1/(cu^* + \xi)\right) =u^*$ when $\|\beps\|_\infty \to 0$,
    \item[$\triangleright$] $+\infty$ when $m(\bx^*) \to \infty$.
\end{itemize}
Finally, $h^1(\|\beps\|_\infty, m(\bx^*))$ increases toward
\begin{itemize}
    \item[$\triangleright$] $-\xi/(cu^* + \xi)$ when $\|\beps\|_\infty \to 0$,
    \item[$\triangleright$] $0$ when $m(\bx^*) \to \infty$.
\end{itemize}
The limits and variations derived in~\cref{coro:KL_application} follow by combining the variations and limits for $h^1$ and $h^2$.

\section{Acknowledgements}
This work has been supported by the ANR EROSION (ANR-22-CE48-0004) and the Toulouse AI cluster ANITI (ANR-23-IACL-0002).

\bibliographystyle{siamplain}
\bibliography{samplebib}

\end{document}